\DeclareMathOperator{\dom}{dom}
\DeclareMathOperator{\rank}{rank}
\DeclareMathOperator{\trace}{Tr}
\newcommand{\Tr}[1]{{\trace\left(#1\right)}}
\newcommand{\norm}[1]{{\left\lVert#1\right\rVert}}
\newcommand{\figref}[1]{\figurename~\ref{#1}}
\journalname{Mathematical Programming}
\begin{document}

\title{Smooth Strongly Convex Interpolation and Exact Worst-case Performance of First-order Methods\thanks{This paper
presents research results of the Belgian Network DYSCO (Dynamical Systems, Control, and Optimization), funded by the Interuniversity Attraction Poles Programme, initiated by the Belgian State, Science Policy Office, and of the Concerted Research Action (ARC) programme supported by the Federation Wallonia-Brussels (contract ARC 14/19-060). The scientific responsibility rests with its authors. A.B.T. is a F.R.I.A. fellow.}
}

\titlerunning{Smooth Strongly Convex Interpolation and Exact Worst-case Performance of First-order Methods}    

\author{Adrien B.~Taylor~\and Julien M.~Hendrickx~\and Fran\c cois Glineur
}

\authorrunning{A.B. Taylor, J.M. Hendrickx, F. Glineur}

\institute{A.B. Taylor, J.M. Hendrickx, F. Glineur \at
              Universit\'e catholique de Louvain, ICTEAM Institute/CORE, B-1348 Louvain-la-Neuve, Belgium\\
              E-mail: Adrien.Taylor@uclouvain.be; Julien.Hendrickx@uclouvain.be; Francois.Glineur@uclouvain.be           
}

\date{Date of current version: \today}

\maketitle

\begin{abstract}
We show that the exact worst-case performance of fixed-step
first-order methods for {unconstrained optimization of} smooth (possibly strongly) convex functions
can be obtained by solving convex programs.

Finding the worst-case performance of a black-box first-order method
is formulated as an optimization problem over a set of smooth (strongly)
convex functions and initial conditions. We develop closed-form necessary and
sufficient conditions for smooth (strongly) convex interpolation,
which provide a finite representation for those functions. This
allows us to reformulate the worst-case performance estimation problem
as an equivalent finite dimension-independent semidefinite
optimization problem, whose exact solution can be recovered up to numerical
precision. Optimal solutions to this performance estimation problem
provide both worst-case performance bounds and explicit functions
matching them, as our smooth (strongly) convex interpolation procedure is
constructive.

Our works build on those of Drori and Teboulle
in {[Math.\@ Prog.\@ 145 (1-2), 2014]} who introduced and solved relaxations of the
performance estimation problem for smooth convex functions.

We apply our approach to different fixed-step first-order methods with
several performance criteria, including objective function accuracy and
gradient norm. We conjecture several numerically supported worst-case
bounds on the performance of the {fixed-step} gradient, fast gradient and optimized {gradient} methods, both in the smooth convex and the smooth strongly
convex cases, and deduce tight estimates of the optimal step size for
the gradient method.
\end{abstract}

\section{Introduction to performance estimation}
\label{sec:intro}
Consider the standard unconstrained minimization problem $$\min_{x\in\mathbb{R}^d} f(x),$$ where $f$ is a smooth convex function, possibly strongly convex. First-order black-box methods, which only rely on the computation of $f$ and its gradient at a sequence of iterates, can be designed to solve this type of problem iteratively. A central question is then to estimate the accuracy of  solutions computed by such a method. More precisely, given a class of problems and a first-order method, one wishes to establish the worst-case accuracy of solutions that can obtained after applying a given number of iterations, {i.e.,} the performance of the method on the given class of problems.

Many first-order algorithms have been proposed in the literature for smooth convex or smooth strongly convex functions, for which one usually provides a theoretical \emph{upper bound} on the worst-case accuracy after a number of iterations ({see e.g.,}~\cite{Book:Nesterov} or~\cite[Chap.\@6]{Book:Bertsekas} for recent overviews).
{However,} many analyses focus on the asymptotic rate of convergence of these bounds, rather than trying to compute exact numerical values. Similarly, \emph{lower bounds} on the performance of first-order black-box methods on given classes of problems can be found in the literature (see {e.g.,} the seminal~\cite{Book:NemirovskyYudin}), again often with a focus on asymptotic rates of convergence. In many situations, the asymptotic rate of {convergence}  
of the best available methods match those lower bounds.

Nevertheless, the \emph{exact} numerical value of the worst-case performance of a given method is usually unknown. This {is because} upper bounds are not assessed precisely, {i.e.,} are known only up to a (possibly unspecified) constant. Another reason is that lower bounds for specific methods are not very frequently developed, and that general lower bounds (valid for all methods) can be quite weak for specific methods, especially if those methods do not feature the best possible  asymptotic rate of convergence. Finally, even if exact numerical values are known  for both lower and upper bounds, and share the same (optimal) asymptotic rate of convergence, a significant gap between the numerical values of those lower and upper bounds can subsist. If one cares about the worst-case efficiency of a first-order method in practice, this gap can translate into a very large uncertainty on the concrete behavior of a method.

This work is not concerned with asymptotic rates of convergence. It will focus on the  computation of the exact worst-case performance of a given first-order black-box method, on a given class of functions, after a given number of iterations. We prove that this question can be formulated and solved exactly as a (finite-dimensional) convex optimization problem, with the following attractive features:
\begin{itemize} \item Our formulation is a semidefinite optimization problem whose dimension is proportional to the {square of the} number of iterations of the method to be analyzed.
\item Any dual feasible solution of our formulation provides an upper bound on the worst-case performance. This solution can be {easily converted} into a standard proof establishing a bound on the performance ({i.e.,} a series of valid inequalities).
\item Any primal feasible solution of our formulation provides a lower bound on the worst case performance. This solution can be easily converted into a concrete function on which the method exhibits the corresponding performance.
\item Hence our formulation is exact, {i.e.,} its optimal value provides the exact worst-case performance.
\end{itemize} 
Our formulation covers both smooth convex functions and smooth strongly convex functions in a unified fashion. It covers a very large class of first-order methods which includes the majority of standard methods {for smooth unconstrained convex optimization}. It can be applied to a variety of performance measures, such as objective function accuracy, gradient norm, or distance to an optimal solution.

\subsection{Formal definition}
Our goal is to express the worst-case performance of an optimization algorithm as the solution of an optimization problem. This approach was pioneered by Drori and Teboulle~\cite{Article:Drori}, who called it a {p}erformance {e}stimation {p}roblem (PEP). We now provide a formal definition for this problem.

We consider unconstrained minimization problems involving a given class of objective functions, and only treat first-order black-box methods. This means that the method can only gather information about the objective function using an oracle $\mathcal{O}_f$, which returns first-order information about specific points, {i.e.,} $\mathcal{O}_f(x) = \{ f(x), {\nabla f(x)} \}$. Formally, the first $N$ iterates generated by a first-order black-box method $\mathcal{M}$ (which correspond to $N$ calls of the oracle), starting from an initial point $x_0$, can be described with 
\begin{align}
\begin{split} \label{eq:MO}
x_1& = \mathcal{M}_1\left(x_0,\mathcal{O}_f(x_0)\right),  \\
x_2& = \mathcal{M}_2\left(x_0,\mathcal{O}_f(x_0),\mathcal{O}_f(x_1)\right),\\
&\vdots\\
x_N& = \mathcal{M}_N\left(x_0,\mathcal{O}_f(x_0),\hdots,\mathcal{O}_f(x_{N-1})\right).
\end{split}
\end{align}

In order to measure the performance of a given method $\mathcal{M}$ on a specific function $f$ with a specific starting point, we introduce a performance criterion $\mathcal{P}$ to be minimized, that will only depend on the function $f$ and the sequence of the iterates $\{x_0, x_1, \ldots, x_N\}$ generated by the method. Since we are in a black-box setting, we require that the criterion can be computed from the output of the oracle $\mathcal{O}_f$, which has only access to the iterates as well as to an additional point $x_*$, defined to be any minimizer of function $f$ (the latter being necessary if the criterion has to compare iterates to an optimal solution).

Examples of this performance criterion $\mathcal{P}(\mathcal{O}_f,x_0,\hdots,x_N,x_*) $ include the objective function accuracy $f(x_N)-f(x_*)$, the norm of the gradient $\| \nabla f(x_N) \|$, or the distance to an optimal solution $\| x_N - x_* \|$ (see also Section \ref{subsec:numerics_mingrad} for an example of criterion that does not only depend on the last iterate).

Finally, we consider a given class  $\mathcal{F}$ of smooth convex or smooth strongly convex functions, over which we wish {to} estimate the worst-case performance of a method after $N$ iterations. {Note that the dimension of functions belonging to class $\mathcal{F}$ is left unspecified; in particular we will allow class $\mathcal{F}$ to contain functions with varying dimensions, in order to obtain dimension-free results.}

As methods try to minimize the performance criterion, their worst-case performance is obtained by maximizing $\mathcal{P}$ over functions in $\mathcal{F}$, which can be written as

\begin{align}
w(\mathcal{F}, R, \mathcal{M}, N, \mathcal{P})=\sup_{f,x_0,{\ldots},x_N,x_*} \ &\mathcal{P}(\mathcal{O}_f,x_0,\hdots,x_N,x_*)  \tag{PEP}\label{Intro:PEP} \\
\text{ {such that} }& f\in \mathcal{F} \notag\\
& x^* \text{ is optimal for } f,\notag\\
& x_1, {\ldots}, x_N \text{ is generated {from $x_0$ by method $\mathcal{M}$ with $\mathcal{O}_f$}}, \notag\\
& \| x_0-x_*\|_2 \leq R. \notag
\end{align}

{P}arameter $R$ was introduced to bound the distance between the initial point $x_0$ and the optimal solution $x_*$. Indeed, it is well-known that in most situations, performance of a first-order method cannot be sensibly assessed without such a constraint (see also the discussion of Section \ref{subsec:homegeneity}).

\subsection{Finite-dimensional reformulation using interpolation}

Because it involves an unknown function $f$ as a variable, problem \eqref{Intro:PEP} is infinite-dimensional. Nevertheless, using the black-box property of the method (and of the performance criterion), we will show that a completely equivalent finite-dimensional problem can readily be formulated by restricting {the} variable $f$ to the knowledge of the output of its oracle $\mathcal{O}_f$ on the iterates $\{x_0, x_1, \ldots, x_N\}$ and $x_*$. {Indeed, denoting the output of the oracle at each iterate $x_i$ by $\mathcal{O}_f(x_i) = \{f_i, g_i\}$, method $\mathcal{M}$ defined by \eqref{eq:MO} can be equivalently rewritten as 
 \begin{align}
\begin{split} \label{eq:Mxg}
x_1& = \mathcal{M}_1\left(x_0, f_0, g_0 \right),  \\
x_2& = \mathcal{M}_2\left(x_0, f_0, g_0, f_1, g_1 \right),\\
&\vdots\\
x_N& = \mathcal{M}_N\left(x_0, f_0, g_0, \ldots, f_{N-1}, g_{N-1} \right).
\end{split}
\end{align}}{Now}, defining a set $I = \{ 0, 1, 2, \ldots, N, * \}$ for the indices of {the} iterates, 
we can reformulate \eqref{Intro:PEP} into a problem involving only the iterates $\{x_i\}_{i \in I}$, their function values $\{f_i \}_{i \in I}$ and their gradients~$\{g_i\}_{i \in I}$ as (using equivalence between optimality of $x_*$ and constraint $g_* = 0${, as our problem is unconstrained})
\begin{align*}
w^f(\mathcal{F}, R, \mathcal{M}, N, \mathcal{P})=\sup_{\left\{x_i,g_i,f_i\right\}_{i\in I}} \ &\mathcal{P}\left(\left\{x_i,g_i,f_i\right\}_{i\in I}\right), \tag{f-PEP}\label{Intro:dPEP} \\
\text{ {such that} } 
&\text{there exists $f \in \mathcal{F}$ such that } \mathcal{O}_f(x_i) = \{f_i, g_i\} \ \forall i \in I, \\
& g_* = 0, \\
& x_1, {\ldots}, x_N \text{ is generated {from $x_0$ by method $\mathcal{M}$ with $\left\{f_i,g_i \right\}_{i\in \{ 0, \ldots, N-1 \} }$}}, \notag\\
&\|x_0-x_*\|_2 \leq R. \end{align*}
The crucial part of this reformulation is the first constraint, which can be understood as requiring that the set of variables $\left\{x_i,g_i,f_i\right\}_{i\in I}$ can be \emph{interpolated} by a function belonging to the class $\mathcal{F}$. {This optimization problem is strictly equivalent to the original~\eqref{Intro:PEP} in terms of optimal value, since  every solution to~\eqref{Intro:dPEP} can be interpolated by a solution of~\eqref{Intro:PEP} and, reciprocally, every solution of~\eqref{Intro:PEP} can be discretized to provide a solution to~\eqref{Intro:dPEP}. From that it is clear that $w(\mathcal{F}, R, \mathcal{M}, N, \mathcal{P})=w^f(\mathcal{F}, R, \mathcal{M}, N, \mathcal{P})$.}

\subsection{Paper organization and main contributions} 

We focus on  the class of smooth (strongly) convex functions. Therefore an exact formulation of problem \eqref{Intro:PEP}  as \eqref{Intro:dPEP} will {require} a set of necessary and sufficient conditions for the existence of a smooth strongly convex interpolating function, which is the main result obtained in Section~\ref{sec:smoothcvxinterp}. This set of conditions, which is of independent interest, was previously only known for general nonsmooth convex functions. Our approach is fully constructive, as we also exhibit a procedure to interpolate a smooth (strongly) convex function from a set of points with their associated gradients and function values, when such an interpolating function exists.

In Section~\ref{sec:Peps}, we show  how the resulting finite-dimensional \eqref{Intro:dPEP} problem can be reformulated exactly into a (convex) semidefinite optimization problem, which provides the first tractable and provably exact formulation of the performance estimation problem. We allow consideration of both smooth convex and smooth strongly convex functions, as well as a large class of performance criteria.

Section~\ref{sec:numerics} then tests our approach numerically on several standard first-order methods, including the constant-step gradient method, the fast gradient method and the optimized {gradient} method from \cite{kim2014optimized}. We are able to confirm several bounds appearing previously in~\cite{Article:Drori}, and to conjecture several new worst-case performance bounds, including bounds for strongly convex functions, and bounds on the gradient norm (either for the final iterate, or the smallest norm among all iterates). Another byproduct of our results is a tight estimate of the optimal step size for the gradient method on smooth convex and smooth strongly convex functions.

\subsection{Prior work}
Drori and Teboulle \cite{Article:Drori} were first to consider the notion of a performance estimation problem. They focus exclusively on the case of smooth convex functions equipped with the performance criterion $f(x_N)-f_*$, and introduce the idea of reducing \eqref{Intro:PEP} to a finite-dimensional problem involving only the iterates $x_i$, their gradients $g_i$ and function values $f_i$, along with an optimal point $x_*$ and optimal value $f_*$. 
They treat several standard first-order algorithms, namely, the standard fixed-step gradient algorithm, the heavy{-}ball method
and the accelerated gradient method~\cite{Nesterov:1983wy}. In their approach,  \eqref{Intro:PEP} is expressed as a non-convex {q}uadratic {m}atrix {p}rogram~\cite{beck2007quadratic}, which is then relaxed and dualized.  The resulting convex problem is then used to provide bounds on the worst-case performance (and, in some cases, is solved analytically).  As will be shown later in this paper (see  Section~\ref{sec:numerics}), because of the use of a relaxation  and the dualization of a non-convex problem,
these bounds are in general not tight, although they are in many special cases.

A Section in~\cite{Article:Drori}  is also devoted to the optimization of the coefficients of a {fixed-step} first-order black-box method. More precisely, a numerical optimization solver is used to identify a method performing  best according to their relaxation of the performance estimation problem. This approach is taken further in~\cite{kim2014optimized}, which provides an analytical description of this optimized method. Again we stress that,
due to the non-tightness of the relaxation in general, these optimized methods are not guaranteed to have the best possible performances.

Another computational approach for the analysis and design of first-order algorithms is proposed in~\cite{lessard2014analysis}, in which optimization procedures are regarded as dynamical systems. Integral {q}uadratic {c}onstraints (IQC), which are usually used to obtain stability guarantees on complicated dynamical systems, are adapted in order to obtain sufficient conditions for the convergence of optimization algorithms. This methodology {is able to establish iteration-independent linear rates of convergence by solving a single small semidefinite program. However those bounds{, valid} for any number of iteration{s, }{are in general not tight, i.e.,\@} more conservative than {ours and those} of \cite{Article:Drori} when used to estimate worst-case performance after a given finite number of iterations (see Subsection~\ref{sss:sc} for an example). In addition, while this methodology is well-suited for studying {the linear convergence rates of }algorithms for smooth strongly convex optimization, it fails to recover the exact {sublinear} rates in the {non-strongly convex case}. 
}

\section{Smooth strongly convex interpolation}
\label{sec:smoothcvxinterp}

This section develops a necessary and sufficient condition for the existence of a smooth strongly convex function interpolating through a given set of data triples $\left\{x_i,g_i,f_i\right\}_{i\in I}$, {i.e.,} deciding whether there exists a smooth strongly convex function $f$ such that $f(x_i) = f_i$ and $g_i \in \partial f(x_i)$ for all $i \in I$.

This result generalizes the well-known set of conditions guaranteeing the existence of a convex, possibly nonsmooth interpolating function (see Theorem~\ref{th:convexinterp} in Subsection~\ref{subsec:CvxInterp}). It is the main technical ingredient of our exact convex reformulation of performance estimation problems.

\subsection{Definitions and problem statement}
\label{subsec:defandprobstat}

We start by defining the functional class of interest, using the standard point of view from convex analysis --- we refer to classic books~\cite{bauschke2011convex,JBHU,Book:Rockafellar,rockafellar1998variational} for details. Given two parameters $\mu$ and $L$ satisfying $0 \le \mu < L \le +\infty$, we consider proper closed convex functions ({i.e.,} whose epigraph{s} are non-empty closed convex {sets}) satisfying both a smoothness condition (depending on the parameter $L$, which is the Lispchitz constant of the gradient) and a strong convexity condition (depending on the parameter $\mu$). We explicitly allow the case $L=+\infty$ {to include nonsmooth functions}, while $\mu$ on the other hand is always assumed to be finite.  In the rest of this paper, we use the  conventions $1/{+\infty}=0$ and $+\infty-\mu=+\infty$ {to deal with the case $L=+\infty$.}

\begin{definition} [$L$-smooth $\mu$-strongly convex functions]Consider a proper and closed convex  function $f:\mathbb{R}^d\rightarrow\mathbb{R}\cup\left\{+\infty\right\}$, and constants $\mu\in\mathbb{R}^+$, $L\in\mathbb{R}^{+}\cup\left\{+\infty\right\}$ with $\mu< L$. We say that function $f$ is $L$-smooth $\mu$-strongly convex (which we denote by $f\in\mathcal{F}_{\mu,L}{(\mathbb{R}^d)}$) if and only if the following two conditions are satisfied:
\begin{itemize}
\item[(a)] inequality $\frac{1}{L}\norm{ g_1 - g_2}_2\leq \norm{x_1-x_2}_2$ holds for all pairs  $x_1, x_2\in\mathbb{R}^d$ and corresponding subgradients ${g_1}, g_2 \in \mathbb{R}^d$ ({i.e.,} such that $g_1\in\partial f(x_1)$ and $g_2\in\partial f(x_2)$){;}
\item[(b)] function $f(x)-\frac{\mu}{2}\norm{x}_2^2$ is convex.
\end{itemize}
\label{def:smstrcvx}
\end{definition}

This definition is not entirely standard, as it involves subgradients and allows {the} constant $L$ to be equal to $+\infty$. 
In the case of a finite $L$, condition (a) immediately implies uniqueness of the subgradient at each point, hence {differentiability of} the function, and we recover the well-known Lipschitz condition on the gradient of a smooth function.
On the other hand, when $L=\infty$, condition (a) becomes vacuous, and the function can be non-differentiable.
Condition (b) can easily be seen to be algebraically equivalent to the standard definition of strong convexity (and the case $\mu=0$ corresponds to a convex but not strongly convex function). The class of proper closed convex functions simply corresponds to $\mathcal{F}_{0,\infty}$. The case $L=\mu$ can be safely discarded, as it only involves simple quadratic functions whose minimization is trivial. The reason for this slightly non-standard definition of $\mathcal{F}_{\mu,L}$ and the possibility of choosing $L=+\infty$ will become clear later, when dealing with {Legendre-}Fenchel conjugation.

As  explained above and in the introduction, our approach to express the original infinite{-}dimensional \eqref{Intro:PEP} in a finite-dimensional fashion relies on an interpolating condition for smooth strongly convex functions. This directly  motivates the following definition.

\begin{definition}[$\mathcal{F}_{\mu,L}$-interpolation] Let $I$ be an index set, and consider the set of triples \\$S = \left\{(x_i,g_i,f_i)\right\}_{i\in I}$ where  $x_i,g_i\in\mathbb{R}^d$ and $f_i\in\mathbb{R}$ for all $i \in I$. Set $S$ is  \emph{$\mathcal{F}_{\mu,L}$-interpolable} if and only if there exists a function $f\in\mathcal{F}_{\mu,L}{(\mathbb{R}^d)}$ such that we have both $g_i\in\partial f(x_i)$ and $f(x_i)=f_i$ for all $i\in I$. 
\label{def:CvxComp}
\end{definition}

\subsection{Necessity and sufficiency of conditions for smooth convex interpolation}
\label{subsec:ce}

Our goal is to identify a set of necessary of sufficient conditions involving the set of data triples and characterizing the existence of an interpolating function. Finding necessary conditions is relatively easy: starting from any set of necessary conditions that holds on the whole domain of a smooth strongly convex function, one can simply restrict this set to those conditions involving only points $x_i$ with $i \in I$ ({i.e.,} to discretize it). For example, it is well-known that the class of $L$-smooth convex functions $\mathcal{F}_{0,L}{(\mathbb{R}^d)}$ is  characterized by the pair of inequalities
\begin{align}
&f(y) \geq f(z) + \nabla f(z)^{\top\!}(y-z), \quad\quad \forall \ y,z \in \mathbb{R}^d, \label{eq:c1} \tag{C1} \\
&||\nabla f(y) - \nabla f(z) ||_2 \leq L ||y-z||_2, \quad\quad \forall \ y,z \in \mathbb{R}^d.&\notag
\end{align}
Therefore, specializing those conditions for $y=x_i$ and $z=x_j$ with $i,j \in I$ leads to the following set of inequalities, which is \emph{necessary}  for the existence of an interpolating function in $\mathcal{F}_{0,L}${:}
\begin{align}
&f_i\geq f_j+g_j^{{\top\!}}(x_i-x_j), \quad\quad \forall  i,j\in I,\label{eq:xce1}\tag{C1f}\\
&||g_i-g_j||_2 \leq L ||x_i-x_j||_2,\quad\quad \forall  i,j\in I .\notag
\end{align}
Now, perhaps surprisingly, it turns out that this latter set of conditions is \emph{not sufficient} to guarantee { $\mathcal{F}_{0,L}$-interpolability}, despite the fact that the originating conditions \eqref{eq:c1} \emph{are sufficient} to guarantee that $f \in \mathcal{F}_{0,L}{(\mathbb{R}^d)}$. In order to see that, consider the following {example with $I=\{1,2\}$ and $d=1$}:
\[ (x_1,g_1,f_1)=(-1,-2,1) \text{ and } (x_2,g_2,f_2)=(0,-1,0).
\]
This pair can clearly not be interpolated by a function {in $\mathcal{F}_{0,L} (\mathbb{R}^1)$ for any $L$}, as there is an unavoidable non-differentiability at $x_1$ {as illustrated on~\figref{fig:counter_ex_cvx_int}}. However, it satisfies Conditions~\eqref{eq:xce1} with $L=1$, which is therefore  not sufficient to guarantee smooth convex interpolation. 
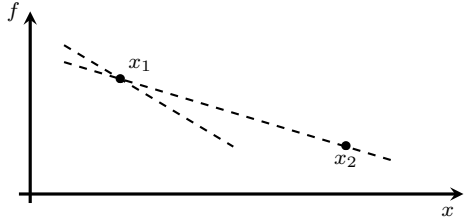
\begin{figure}
\begin{center}
\begin{tikzpicture}[yscale=0.3,xscale=0.3]
\tikzset{fleche/.style={->, >=latex, thick},
	    flecheB/.style={<->, >=latex, very thick},
 	    droite/.style={thick, dashed}};
 	    
\draw[very thick, black, -stealth] (-8.5,-0.1) -- (11.2,-0.1);
\draw (10.5,-0.3) node[below]{$x$};
\draw[very thick, black, -stealth] (-8,-0.5) -- (-8,8);
\draw (-8.1,8) node[left]{$f$};

\node (A) at (-4,5) {$\bullet$};
\node (B) at (6,2) {$\bullet$};
\node (Agrad1) at (-6.5,5+6/4) {};
\node (Agrad2) at (1,5-12/4) {};
\node (Bgrad1) at (-6.5,5+3/4) {};
\node (Bgrad2) at (8,2-6/10) {};
\draw (A) node[above right] {$x_1$};
\draw (B) node[below] {$x_2$};
\draw [droite] (Bgrad1.center) to (Bgrad2.center);
\draw [droite] (Agrad1.center) to (Agrad2.center);
\node (A) at (-4,5) {$\bullet$};
\node (B) at (6,2) {$\bullet$};
\end{tikzpicture}
\end{center}
\caption{{Example $(x_1,g_1,f_1)=(-1,-2,1)$ and $(x_2,g_2,f_2)=(0,-1,0)$ {for $I=\{1,2\}$ and $d=1$}. This set {satifies conditions ~\eqref{eq:xce1} but} cannot be interpolated by a smooth convex function: {the convexity requirement forces the interpolating convex function to lie entirely above its linear under-approximations, which lead to an unavoidable non-differentiability at $x_1$}.}}
\label{fig:counter_ex_cvx_int}
\end{figure}

Similarly, we can carry out the same exercise for the following conditions, also well-known to be equivalent to inclusion on $\mathcal{F}_{0,L}{(\mathbb{R}^d)}$ when imposed on the whole space:
\begin{align}
&f_i\geq f_j+g_j^{{\top\!}}(x_i-x_j), \quad \quad \forall\ i,j\in I,\label{eq:ce2}\tag{C2f}\\
&f_i\leq f_j+g_j^{{\top\!}}(x_i-x_j)+\frac{L}{2}||x_i-x_j||^2_2,\quad \quad \forall\ i,j\in I. \notag
\end{align}
With an appropriate use of an additional dimension {($d=2$)}, one can readily observe that some information may be hidden to this pair of inequalities{.} {Consider the example}
\[ (x_1,g_1,f_1)=\left(\begin{pmatrix}
0\\0
\end{pmatrix}, \begin{pmatrix}
1\\0
\end{pmatrix},0 \right) \text{ and }
(x_2,g_2,f_2)=\left(\begin{pmatrix}
1\\0
\end{pmatrix}, \begin{pmatrix}
1\\ 1
\end{pmatrix},1 \right),
\]
from which no smooth convex interpolation can be made (again, unavoidable non-differentiability at both $x_1$ and $x_2$). However, both Conditions~\ref{eq:xce1} and~\ref{eq:ce2} are satisfied with $L=1$.

Those examples illustrate the weakness of a naive approach that consists in discretizing standard necessary and sufficient conditions defined on the whole space. If those discretized conditions were used in a performance estimation problem over a given class of functions $\mathcal{F}_{\mu,L}$, they would implicitly allow the performance of functions that do not belong to the class $\mathcal{F}_{\mu,L}$ to be taken into account. This would correspond to a relaxation of the original performance estimation problem, and would only lead to upper bounds on the worst-case performance. {To conclude this section, note that {any set of necessary and sufficient conditions} for smooth convex interpolability {must be} the discretization of some necessary and sufficient conditions on the whole domain, whereas the previous examples precisely show that the converse is not true.}

In the next subsections, we follow a more principled approach in order to tackle the $\mathcal{F}_{\mu,L}$-interpolation problem. We start with a special case of convex interpolation, that of proper convex functions without smoothness or strong convexity requirement ({i.e.,} the class $\mathcal{F}_{0,\infty}$), for which a solution is well-known.

\subsection{Convex interpolation}
\label{subsec:CvxInterp}

In order to build interpolation conditions for the class of smooth strongly convex functions, we begin by constructing interpolation conditions for the simpler class of convex functions $\mathcal{F}_{0,\infty}{(\mathbb{R}^d)}$. As this result will be one of building blocks of the smooth strongly convex interpolation procedure, a simple constructive proof of this theorem is provided.

\begin{theorem}[Convex interpolation] \label{th:convexinterp} {The s}et $\left\{(x_i,g_i,f_i)\right\}_{i\in I}$ is $\mathcal{F}_{0,\infty}$-interpolable if and only if
\begin{equation}
f_i \geq f_j + g_j^{\!\top}  ( x_i-x_j)\quad \forall i,j\in I.
\label{eqn:Thm:CvxCompl1}
\end{equation}
\label{thm:CvxComp}
\end{theorem}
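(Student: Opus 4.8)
The plan is to prove both directions of the equivalence, with necessity being immediate and sufficiency requiring an explicit construction.

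\textbf{Necessity.} Suppose there exists $f\in\mathcal{F}_{0,\infty}(\mathbb{R}^d)$ with $g_i\in\partial f(x_i)$ and $f(x_i)=f_i$ for all $i\in I$. Then the defining subgradient inequality for convex functions, applied at the point $x_j$ with subgradient $g_j$ and evaluated at $x_i$, reads $f(x_i)\geq f(x_j)+g_j^{\!\top}(x_i-x_j)$, which is exactly~\eqref{eqn:Thm:CvxCompl1}. Nothing more is needed here.

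\textbf{Sufficiency.} Assume the inequalities~\eqref{eqn:Thm:CvxCompl1} hold. The natural candidate is the pointwise maximum of the affine minorants encoded by the data, namely
\begin{equation*}
f(x) = \max_{i\in I}\ \bigl\{ f_i + g_i^{\!\top}(x-x_i) \bigr\}.
\end{equation*}
This $f$ is a supremum of affine functions, hence proper closed and convex (properness using that $I$ is nonempty and each affine piece is finite), so $f\in\mathcal{F}_{0,\infty}(\mathbb{R}^d)$. It remains to check the interpolation conditions at each $j\in I$. First, $f(x_j)=\max_{i\in I}\{f_i+g_i^{\!\top}(x_j-x_i)\}$; the inequalities~\eqref{eqn:Thm:CvxCompl1} (with the roles of $i,j$ swapped) give $f_j\geq f_i+g_i^{\!\top}(x_j-x_i)$ for every $i$, so the maximum is attained by the term $i=j$ and equals $f_j$; hence $f(x_j)=f_j$. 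Second, I must show $g_j\in\partial f(x_j)$: for any $x$, since the $j$-th affine piece is one of those over which we take the maximum, $f(x)\geq f_j+g_j^{\!\top}(x-x_j)=f(x_j)+g_j^{\!\top}(x-x_j)$, which is precisely the subgradient inequality defining $g_j\in\partial f(x_j)$.

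\textbf{Main obstacle.} There is no serious obstacle: the argument is routine once one thinks of taking the max of the supporting hyperplanes. The only points requiring a little care are confirming that $f$ is proper (which fails only in degenerate situations excluded here, e.g.\ $I$ empty) and correctly using the swapped-index form of~\eqref{eqn:Thm:CvxCompl1} to see that each affine piece is both a valid global minorant and tight at its own base point. Since the paper emphasizes constructivity, I would state explicitly that the construction $f(x)=\max_{i\in I}\{f_i+g_i^{\!\top}(x-x_i)\}$ is the interpolating function produced by the procedure, as this exact formula will be reused as a building block in the subsequent smooth strongly convex interpolation.
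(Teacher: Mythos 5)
Your proposal is correct and follows essentially the same route as the paper: necessity from the subgradient inequality, and sufficiency via the piecewise-linear function $f(x)=\max_{i\in I}\{f_i+g_i^{\!\top}(x-x_i)\}$, verifying exactness at each $x_j$ and the subgradient property exactly as the paper does. No gaps.
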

\begin{proof}
(Necessity.) Assume there exists a convex function $f: \mathbb{R}^d \rightarrow \mathbb{R}$ such that $f_i=f(x_i)$ and $g_i\in\partial f(x_i) \ \forall i\in I$. {The d}efinition of {a} subgradient then immediately implies that \[ f_i \geq f_j + g_j^{\!\top}  (x_i-x_j) \quad \forall i,j\in I.\]
(Sufficiency.) Define the following piecewise-linear convex function \[ f(x)=\max_{j\in I} \left\{ f_j + g_j^{\!\top}  ( x-x_j)\right\}.\]
Since $f$ is the pointwise maximum of a finite number of affine functions, its epigraph is a non-empty polyhedron, {and} hence $f$ is convex, closed and proper. In addition, $f(x_i)=f_i$ holds by construction. Indeed, we first see that 
\begin{align*}
f_i&=f_i+g_i^{\!\top}  (x_i-x_i){,}\\
&\leq \max_{j\in I} \left\{ f_j + g_j^{\!\top}  ( x_i-x_j)\right\}=f(x_i).
\end{align*}
Therefore, we have $f_i\leq f(x_i)$. In addition to this, we have 
\begin{align*}
f(x_i)&=\max_{j\in I} \left\{ f_j + g_j^{\!\top}  ( x_i-x_j)\right\},\\
&\leq f_i \quad \text{using Condition~\eqref{eqn:Thm:CvxCompl1} for each } j,
\end{align*}
which allows to conclude that $f(x_i)=f_i$.
The construction also implies that $g_i\in\partial f(x_i)${, i.e.,}
\begin{align*}
f(x)&=\max_{j\in I} \left\{ f_j + g_j^{\!\top}  (x-x_j)\right\} \quad \forall x\in\mathbb{R}^d,\\
&\geq f_i + g_i^{\!\top}  (x-x_i) \quad \forall i\in I, x\in\mathbb{R}^d,\\
&\geq f(x_i) + g_i^{\!\top}  (x-x_i) \quad \forall i\in I, x\in\mathbb{R}^d.
\end{align*}\qed
\end{proof}
One should note that the effective domain {of $f$} is $\dom f=\mathbb{R}^d$ --- that is, the function takes finite values for all $x\in~\mathbb{R}^d$. This is of course not the only way of reconstructing a valid $f$. For example, we could choose
\begin{align*}
f(x)=\left\{\begin{array}{ll}
\max_j \left\{ f_j + g_j^{\!\top}  ( x-x_j)\right\} \quad \quad &\text{if }x\in\text{conv}\left(\left\{x_i\right\}_{i\in I}\right),\\
+\infty & \text{otherwise.}
\end{array}\right.
\end{align*}

\begin{remark}Our interpolation problem is an extension of the classical finite convex integration problem, which is concerned with {the recovery of a} convex function from a set of points $x_i$ associated with a subgradient $g_i$ ({i.e.,}  function values are not specified). Finite convex integration is treated in details in~\cite{article:FiniteConvexIntegration} (only in the convex case $\mu=0$ and $L=+\infty$). {It} is the finite version of the continuous convex integrability problem, which is treated in~\cite{Book:Rockafellar}.

{A direct necessary and sufficient condition for deciding whether a set is convex integrable is to require the existence of function values $f_i$ for which the set $\left\{(x_i,g_i,f_i)\right\}_{i\in I}$ is convex interpolable. It is however also possible to derive a set of inequalities that does not involve unknown function values $f_i$, using the so-called cyclic monotonicity conditions. More precisely, consider for every sequence $\{ i_1, i_2, \ldots, i_N \}$ of distinct indices in $I$, and the corresponding cyclic sequence of successive pairs $\{ (i_1,i_2), (i_2, i_3), \ldots, (i_{N-1},i_N), (i_N,i_1)\}$. Summing inequality \eqref{eqn:Thm:CvxCompl1} over each pair of indices in this cyclic sequence produces a necessary inequality that does not involve function values $f_i$. Moreover, the set of all such inequalities, obtained from all possible sequences of distinct indices, is necessary and sufficient for convex integration, see e.g. \cite{article:FiniteConvexIntegration,Book:Rockafellar}. Note that this condition features a much larger number of inequalities.}
\end{remark}

\subsection{Conjugation and minimal curvature subtraction}
\label{subsec:PropSmStrCvx}
In this section, we review some concepts and results needed for our generalization of convex interpolation to smooth convex interpolation. We begin with the concept of \emph{conjugation} operation.  This operation is a key element in our approach, since it provides a way to reduce the general smooth strongly convex interpolation problem to a simpler convex interpolation problem.
\begin{definition} Given a {proper} function $f: \mathbb{R}^d\rightarrow\mathbb{R}\cup\left\{+ \infty \right\}$, the (Legendre-Fenchel) conjugate $f^*:\mathbb{R}^d\rightarrow\mathbb{R}\cup\left\{+ \infty \right\}$ of $f$ is defined as:
$$f^*(y)=\sup_{x\in \mathbb{R}^d} y^{\!\top}x-f(x).$$
\end{definition}
Conjugate functions enjoy numerous useful properties. Among other, they are always closed and convex. In fact, conjugation realizes a one-to-one correspondence in the set of proper closed convex functions ({i.e.,}\@ an involution), {{see for example }Theorem 12.2 of \cite{Book:Rockafellar}. That is, for any $f\in\mathcal{F}_{0,\infty}{(\mathbb{R}^d)}$ we have $f^{*}\in\mathcal{F}_{0,\infty}{(\mathbb{R}^d)}$ and $f^{**}=f$.}

{Among the useful properties of conjugate functions, we note that for any function $f\in\mathcal{F}_{0,\infty}{(\mathbb{R}^d)}$, conjugation can be seen as an operation reversing the roles of the coordinates and the subgradients: any subgradient (resp. coordinate) in one space becomes a coordinate (resp. subgradient) in the second space. In other words, for any function $f\in\mathcal{F}_{0,\infty}{(\mathbb{R}^d)}$ {and its conjugate $f^*$}, it is equivalent to {require that $x$ and $g$ satisfy {condition} $g\in\partial f(x)$, {condition} $x\in\partial f^*(g)$ {or} condition} $f(x) + f^*(g) = g^{\!\top}  x$. This can be obtained using first-order optimality conditions on the definition of conjugate function; we refer to Theorem~23.5 from \cite{Book:Rockafellar} for more details.} The next theorem emphasizes the effect of this link for the class of smooth convex functions.
\begin{theorem} Consider a function $f\in\mathcal{F}_{0,\infty}{(\mathbb{R}^d)}$. We have $f\in\mathcal{F}_{0,L}{(\mathbb{R}^d)}$ if and only if $f^*\in\mathcal{F}_{1/L,\infty}{(\mathbb{R}^d)}$.
\label{thm:ConjStrCvxLLipsch}
\end{theorem}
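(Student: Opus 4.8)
The plan is to prove the equivalence by translating the two defining conditions of $\mathcal{F}_{0,L}(\mathbb{R}^d)$ (namely $L$-Lipschitz gradient, i.e. $\mathcal{F}_{0,L}$-condition (a), together with convexity) into the two defining conditions of $\mathcal{F}_{1/L,\infty}(\mathbb{R}^d)$ (namely proper closed convexity, which $f^*$ automatically satisfies, plus $(1/L)$-strong convexity, i.e. condition (b) with $\mu = 1/L$) via the conjugacy correspondence between subgradients and coordinates recalled just before the statement. Since $f \in \mathcal{F}_{0,\infty}(\mathbb{R}^d)$ is assumed, conjugation is an involution, so it suffices to prove one implication: if $f \in \mathcal{F}_{0,L}(\mathbb{R}^d)$ then $f^* \in \mathcal{F}_{1/L,\infty}(\mathbb{R}^d)$; applying that implication to $f^*$ (using $f^{**}=f$ and $1/(1/L) = L$, together with the convention $1/{+\infty}=0$) then yields the converse.

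For the main implication, I would proceed as follows. First I recall that $f^*$ is proper, closed and convex for free, so condition (a) of $\mathcal{F}_{1/L,\infty}$ is vacuous (since $L$ in that class is $+\infty$) and only strong convexity with parameter $1/L$ needs checking; when $L = +\infty$ this is vacuous as well, so assume $L$ finite. I would characterize $(1/L)$-strong convexity of $f^*$ through the monotonicity-type inequality equivalent to condition (b): for all $g_1, g_2$ and all $x_1 \in \partial f^*(g_1)$, $x_2 \in \partial f^*(g_2)$,
\[
(x_1 - x_2)^{\!\top}(g_1 - g_2) \geq \tfrac{1}{L}\,\norm{g_1 - g_2}_2^2 .
\]
By the conjugacy correspondence, $x_i \in \partial f^*(g_i)$ is equivalent to $g_i \in \partial f(x_i)$, so this inequality is exactly the co-coercivity (firm nonexpansiveness) inequality for $f$: $(x_1 - x_2)^{\!\top}(g_1 - g_2) \geq \tfrac{1}{L}\norm{g_1 - g_2}_2^2$ for all $x_1,x_2$ and subgradients $g_i \in \partial f(x_i)$. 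Thus the statement reduces to the classical fact that an $L$-smooth convex function has a co-coercive gradient, and conversely, which I would prove (or quote) in the differentiable setting: co-coercivity follows from combining the descent lemma with convexity in the standard way (minimize $z \mapsto f(z) - g_1^{\!\top} z$ to get $f(x_2) - f(x_1) - g_1^{\!\top}(x_2 - x_1) \geq \tfrac{1}{2L}\norm{g_2 - g_1}_2^2$, add the symmetric inequality), and the reverse implication (co-coercivity $\Rightarrow$ $L$-Lipschitz gradient) is immediate by Cauchy–Schwarz.

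The one genuinely delicate point — and the reason the paper's non-standard Definition~\ref{def:smstrcvx} is phrased with subgradients and allows $L = +\infty$ — is handling the correspondence cleanly without circular appeals to differentiability: one must make sure that condition (a) of $\mathcal{F}_{0,L}$ (stated for arbitrary subgradients) plus convexity really is equivalent to co-coercivity written with subgradients, and that strong convexity of $f^*$ is correctly captured by the subgradient monotonicity inequality above (since $f^*$ may be non-differentiable). I expect this bookkeeping with subgradients, together with the two conventions $1/{+\infty}=0$ and $+\infty - \mu = +\infty$ in the boundary case $L = +\infty$, to be the main obstacle; the analytic content (descent lemma, co-coercivity) is routine. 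Once the subgradient-level equivalence "$f$ satisfies (a)+(b with $\mu=0$)" $\iff$ "$f^*$ is $(1/L)$-strongly convex" is established, the theorem follows, and applying it to $f^*$ gives the stated "if and only if".
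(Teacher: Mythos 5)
The paper does not prove this theorem from scratch: its ``proof'' is a citation (Proposition~12.60 of Rockafellar--Wets for $L<+\infty$, Theorem~12.2 of Rockafellar for $L=+\infty$). Your proposal reconstructs the argument instead: you characterize $(1/L)$-strong convexity of $f^*$ by strong monotonicity of $\partial f^*$, identify that inequality with co-coercivity of $\partial f$ via the correspondence $x\in\partial f^*(g)\iff g\in\partial f(x)$, and then prove co-coercivity $\iff$ condition~(a) of Definition~1 for convex $f$ (descent lemma plus convexity one way, Cauchy--Schwarz the other). Since every link of this chain is an equivalence, both directions of the theorem follow, and the analytic content is correct. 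Two caveats: the link ``strong monotonicity of the subdifferential $\iff$ strong convexity'' for a possibly nondifferentiable $f^*$ with restricted domain is itself essentially the result the paper cites (it needs an integration-along-segments argument, or a reference such as Rockafellar--Wets), so at that point you are quoting roughly what the paper quotes; and to run the descent-lemma step you should note explicitly that condition~(a) with finite $L$, together with properness and closedness, forces single-valued subdifferentials and $\dom f=\mathbb{R}^d$, hence genuine differentiability --- you flag this bookkeeping, and it does go through, but it is the one place where the subgradient formulation needs an actual argument rather than a remark.

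One sentence in your framing is wrong, although harmless to the rest: proving only the implication ``$f\in\mathcal{F}_{0,L}\Rightarrow f^*\in\mathcal{F}_{1/L,\infty}$'' and applying it to $f^*$ does \emph{not} yield the converse. Applying that implication to $f^*$ would require $f^*$ to be smooth, which is not the hypothesis of the converse; the statement dual to ``smoothness implies strong convexity of the conjugate'' is ``strong convexity implies smoothness of the conjugate'', which is a different implication that must be proved (or obtained, as you in fact do, by making every step of the chain an equivalence). Your actual argument does not use this shortcut, since you establish both directions of the co-coercivity link explicitly, so the theorem still follows; but the involution remark should be deleted or corrected, as written it would not survive scrutiny on its own.
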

Theorem~\ref{thm:ConjStrCvxLLipsch} is basically Proposition~12.60 of~\cite{rockafellar1998variational} in the case $L<+\infty$ and reduces to {Theorem 12.2 of~\cite{Book:Rockafellar}} in the case $L=+\infty$. This also explains why we need to include the case {$L=+\infty$} in our interpolation problem: this is so that we can include the conjugates of smooth but non{-}strongly convex functions in $\mathcal{F}_{0,L}{(\mathbb{R}^d)}$.

The next lemma gives a simple way of expressing smooth strongly convex functions in terms of smooth functions. 

\begin{theorem}
Consider a function $f\in\mathcal{F}_{0,\infty}{(\mathbb{R}^d)}${. We have $f\in\mathcal{F}_{\mu,L}{(\mathbb{R}^d)}$ if and only if $f(x) - \frac{\mu}{2} \norm{x}_2^2 \in \mathcal{F}_{0,L-\mu}{(\mathbb{R}^d)}$.}
\label{lem:StrCvxEq}
\end{theorem}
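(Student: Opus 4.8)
The plan is to prove the equivalence by directly manipulating conditions (a) and (b) of Definition~\ref{def:smstrcvx} applied to the shifted function $h(x) = f(x) - \frac{\mu}{2}\norm{x}_2^2$. The key observation is that since $h$ differs from $f$ by a smooth (indeed quadratic) term, the subdifferentials are related by the clean identity $\partial h(x) = \{g - \mu x : g \in \partial f(x)\}$, which follows from standard subdifferential calculus (the sum rule applies since $\frac{\mu}{2}\norm{\cdot}_2^2$ is everywhere differentiable). So $g \in \partial f(x)$ if and only if $g - \mu x \in \partial h(x)$.

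First I would establish that $h \in \mathcal{F}_{0,\infty}(\mathbb{R}^d)$ whenever $f \in \mathcal{F}_{0,\infty}(\mathbb{R}^d)$ and $f$ satisfies condition (b) — indeed condition (b) says precisely that $h$ is convex, and $h$ is proper and closed since subtracting a finite continuous function preserves these properties. This legitimizes applying the hypothesis of the theorem. Conversely, if $h \in \mathcal{F}_{0,L-\mu}(\mathbb{R}^d)$ then $h$ is convex, so $f = h + \frac{\mu}{2}\norm{\cdot}_2^2$ satisfies condition (b) automatically, and $f$ is proper closed convex (sum of a convex function and a convex quadratic).

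Next I would check condition (a). For $f$, condition (a) with parameter $L$ reads $\norm{g_1 - g_2}_2 \le L\norm{x_1-x_2}_2$ for all valid subgradient pairs; for $h$, condition (a) with parameter $L-\mu$ reads $\norm{(g_1 - \mu x_1) - (g_2 - \mu x_2)}_2 \le (L-\mu)\norm{x_1-x_2}_2$. Naively these are not literally equivalent by algebra of norms alone — the Lipschitz-type bound in Definition~\ref{def:smstrcvx}(a) is a one-sided inequality and cannot be ``split'' coordinatewise. The clean way around this is to invoke the conjugation machinery just developed: by Theorem~\ref{thm:ConjStrCvxLLipsch}, $f \in \mathcal{F}_{0,L}$ iff $f^* \in \mathcal{F}_{1/L,\infty}$, and one can relate $h^*$ to $f^*$. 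Actually the slickest route is: $f\in\mathcal{F}_{\mu,L}$ iff ($f$ is proper closed convex, $h=f-\frac\mu2\norm\cdot_2^2$ is convex, and $\frac1L\norm{g_1-g_2}_2\le\norm{x_1-x_2}_2$); I would show this last inequality, given that $h$ is convex, is equivalent to the monotonicity/cocoercivity statement that $h$ has an $(L-\mu)$-Lipschitz gradient, using that the subgradients of $f$ and $h$ differ by the bijective affine map $g \mapsto g-\mu x$. Concretely: for a convex function, the Lipschitz-gradient condition (a) is equivalent (via Theorem~\ref{thm:ConjStrCvxLLipsch} and the involution $f^{**}=f$) to strong convexity of the conjugate; so I would translate ``$f\in\mathcal F_{\mu,L}$'' to a statement about $f^*$, show $h^* = (f^*)$ with a shifted... — on reflection, the cleanest is the first, elementary approach, being careful about the inequality direction.

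The main obstacle I anticipate is exactly this point about condition (a): one must verify that subtracting $\frac\mu2\norm\cdot_2^2$ from a $\mu$-strongly convex $L$-smooth function yields precisely an $(L-\mu)$-Lipschitz gradient and not merely an upper bound, i.e. that no curvature is ``lost.'' The resolution is that for a convex differentiable function, $L$-smoothness is equivalent to the interpolation-style inequality (the second line of \eqref{eq:c1}), and for $f$ this becomes, after the substitution $g \mapsto g - \mu x$ and using convexity of $h$, exactly the corresponding inequality for $h$ with constant $L-\mu$; one checks both the lower inequality \eqref{eq:c1} (convexity of $h$, already handled) and the upper smoothness inequality survive the shift with the parameter decreasing by exactly $\mu$. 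The edge case $L = +\infty$ is handled by the stated conventions ($+\infty - \mu = +\infty$, $1/{+\infty}=0$), under which condition (a) is vacuous on both sides and the statement reduces to ``$f$ is $\mu$-strongly convex iff $f - \frac\mu2\norm\cdot_2^2$ is convex,'' which is condition (b) verbatim.
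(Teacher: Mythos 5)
Your proposal is correct in substance, but it takes a genuinely different route from the paper: the paper gives no self-contained argument at all, observing only that the case $L=+\infty$ is immediate from Definition~\ref{def:smstrcvx}(b) and deferring the case $L<+\infty$ to the proof of Theorem~2.1.11 in Nesterov's book, whereas you sketch an elementary direct proof. Your final plan is the right one: use the fact (which the paper itself treats as known, cf.\ the discussion around \eqref{eq:ce2}) that for a \emph{convex} function, an $L$-Lipschitz gradient is equivalent to the quadratic upper bound $f(y)\le f(x)+\nabla f(x)^{\top}(y-x)+\tfrac{L}{2}\norm{y-x}_2^2$, and then observe that this bound for $f$ with constant $L$ is \emph{identically} the same inequality as the bound for $h=f-\tfrac{\mu}{2}\norm{\cdot}_2^2$ with constant $L-\mu$, since with $\nabla h(x)=\nabla f(x)-\mu x$ the difference of the two right-hand sides is $\tfrac{\mu}{2}\norm{y-x}_2^2-\tfrac{\mu}{2}\norm{y}_2^2+\tfrac{\mu}{2}\norm{x}_2^2+\mu x^{\top}(y-x)=0$; combined with convexity of $h$ (condition (b)) in one direction and convexity of $f$ (the hypothesis $f\in\mathcal{F}_{0,\infty}$) in the other, this yields exactly the claimed equivalence, and your treatment of properness/closedness, the subdifferential shift, and the $L=+\infty$ convention is fine. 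Two small caveats: the inequality you call ``the second line of \eqref{eq:c1}'' is the Lipschitz condition itself, not the quadratic upper bound you actually need (that is the second line of \eqref{eq:ce2} imposed on the whole space), and the sentence ``one checks\ldots the parameter decreases by exactly $\mu$'' is the entire computational content of the proof, so the cancellation above should be written out; the detour through conjugation that you wisely abandon would also work (it is not circular, since Theorem~\ref{thm:ConjStrCvxLLipsch} is independent), but it requires relating $h^*$ to $f^*$ via an infimal convolution and is heavier than the elementary argument. What your approach buys is a self-contained proof within the paper's framework; what the paper's citation buys is brevity, at the cost of sending the reader to an external source for the only nontrivial case.
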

This theorem holds true by Definition~\ref{def:smstrcvx} when $L=+\infty$. The case $L<+\infty$ can be found in the proof of Theorem~2.1.11 in~\cite{Book:Nesterov}.

\subsection{Necessary and sufficient conditions for smooth strongly convex interpolation}
\label{subsec:GeneralSmoothStrCvxInterp}

We now focus on transforming the smooth strongly convex interpolation problem into a convex interpolation problem. In order to do so, we mainly use the two previously defined operations: conjugation (using Theorem~\ref{thm:ConjStrCvxLLipsch}) and minimal curvature subtraction (using Theorem~\ref{lem:StrCvxEq}). The reasoning is the following:
\begin{itemize}
\item[(i)] Reformulate the $\mathcal{F}_{\mu,L}$ interpolation problem into a $\mathcal{F}_{0,L-\mu}$ interpolation problem using minimal curvature subtraction.
\item[(ii)] Write the $\mathcal{F}_{0,L-\mu}$ interpolation problem into a $\mathcal{F}_{1/(L-\mu),\infty}$ interpolation problem using Legendre-Fenchel conjugation.
\item[(iii)] Transform the $\mathcal{F}_{1/(L-\mu),\infty}$ interpolation problem into a $\mathcal{F}_{0,\infty}$ interpolation problem using again minimal curvature subtraction.
\end{itemize}
The effect of minimal curv{atur}e subtraction on our interpolation problem, used in steps (i) and (iii), is described by the following Lemma.
\begin{lemma} Consider a finite set $\left\{(x_i,g_i,f_i)\right\}_{i\in I}$ with $x_i,g_i\in\mathbb{R}^d$ and $f_i\in\mathbb{R}$. The following propositions are equivalent for any constants $0\leq\mu<L\leq +\infty$:
\begin{itemize}
\item[(a)] $\left\{\left(x_i,g_i,f_i\right)\right\}_{i\in I}$ is $\mathcal{F}_{\mu,L}$-interpolable,
\item[(b)] $\left\{\left(x_i,g_i-\mu x_i,f_i-\frac{\mu}{2}\norm{x_i}_2^2\right)\right\}_{i\in I}$ is $\mathcal{F}_{0,L-\mu}$-interpolable.
\end{itemize}
\label{thm:eqCondInterp2}
\end{lemma}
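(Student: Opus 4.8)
The plan is to establish the equivalence (a)$\Leftrightarrow$(b) by a direct translation argument, using Theorem~\ref{lem:StrCvxEq} as the bridge between the functional classes and a simple algebraic identity relating subgradients and function values of $f$ and of $h(x) := f(x) - \frac{\mu}{2}\norm{x}_2^2$. First I would record the elementary fact that $h$ is differentiable/subdifferentiable wherever $f$ is, with $\partial h(x) = \partial f(x) - \mu x$ (this follows since $\frac{\mu}{2}\norm{x}_2^2$ is smooth convex with gradient $\mu x$, so subgradients add), and that $h(x) = f(x) - \frac{\mu}{2}\norm{x}_2^2$. Hence for any point $x_i$, the triple $(x_i, g_i, f_i)$ is ``realized'' by $f$ (meaning $g_i \in \partial f(x_i)$ and $f(x_i) = f_i$) if and only if the triple $(x_i,\, g_i - \mu x_i,\, f_i - \frac{\mu}{2}\norm{x_i}_2^2)$ is realized by $h$.

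Next I would prove (a)$\Rightarrow$(b): given $f \in \mathcal{F}_{\mu,L}(\mathbb{R}^d)$ interpolating $\{(x_i,g_i,f_i)\}_{i\in I}$, set $h(x) = f(x) - \frac{\mu}{2}\norm{x}_2^2$. By Theorem~\ref{lem:StrCvxEq}, $h \in \mathcal{F}_{0,L-\mu}(\mathbb{R}^d)$, and by the identities just noted, $h$ interpolates $\{(x_i, g_i - \mu x_i, f_i - \frac{\mu}{2}\norm{x_i}_2^2)\}_{i\in I}$, which is exactly (b). For the converse (b)$\Rightarrow$(a): given $h \in \mathcal{F}_{0,L-\mu}(\mathbb{R}^d)$ interpolating the shifted data, define $f(x) = h(x) + \frac{\mu}{2}\norm{x}_2^2$. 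Again by Theorem~\ref{lem:StrCvxEq}, $f \in \mathcal{F}_{\mu,L}(\mathbb{R}^d)$ (the theorem is stated as an ``if and only if''), and reversing the same algebraic identities shows $f$ interpolates the original data $\{(x_i,g_i,f_i)\}_{i\in I}$, giving (a). Since both directions are just substitution plus one application of Theorem~\ref{lem:StrCvxEq}, the argument is short.

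The only point needing a little care — and the ``main obstacle'', though it is mild — is the subdifferential calculus identity $\partial\bigl(f - \tfrac{\mu}{2}\norm{\cdot}_2^2\bigr)(x) = \partial f(x) - \mu x$ together with the fact that both $f$ and $h = f - \tfrac{\mu}{2}\norm{\cdot}_2^2$ are proper closed convex (so that Definition~\ref{def:smstrcvx} and Theorem~\ref{lem:StrCvxEq} genuinely apply). Properness and closedness of $h$ follow from those of $f$ since $\frac{\mu}{2}\norm{\cdot}_2^2$ is finite-valued and continuous; the subdifferential sum rule holds exactly here because one of the two summands is everywhere finite and continuous (no constraint-qualification issue). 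In the edge case $L = +\infty$ one uses the convention $+\infty - \mu = +\infty$ and the statement is, as the authors note elsewhere, immediate from Definition~\ref{def:smstrcvx}; I would mention this case explicitly so the proof covers all $0 \le \mu < L \le +\infty$. I would also note that the map $(x_i,g_i,f_i) \mapsto (x_i, g_i-\mu x_i, f_i - \frac{\mu}{2}\norm{x_i}_2^2)$ is a bijection on triples, which makes the ``if and only if'' between the two interpolation statements transparent rather than requiring separate bookkeeping in each direction.
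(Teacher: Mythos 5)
Your proposal is correct and follows essentially the same route as the paper: subtract (resp.\ add) the quadratic $\frac{\mu}{2}\norm{x}_2^2$, invoke Theorem~\ref{lem:StrCvxEq} to move between $\mathcal{F}_{\mu,L}$ and $\mathcal{F}_{0,L-\mu}$, and translate the interpolation data via $g_i \mapsto g_i - \mu x_i$ and $f_i \mapsto f_i - \frac{\mu}{2}\norm{x_i}_2^2$. Your extra care about the subdifferential sum rule, properness/closedness of $h$, and the $L=+\infty$ convention only makes explicit details the paper leaves implicit.
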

\begin{proof}
{}[$(a)\Rightarrow(b)$] It follows from Theorem~\ref{lem:StrCvxEq} that if there exists $f\in\mathcal{F}_{\mu,L}{(\mathbb{R}^d)}$ interpolating the set, {then $h(x)=f(x)-\frac{\mu}{2}\norm{x}_2^2$} exists and satisfies $h\in\mathcal{F}_{0,L-\mu}{(\mathbb{R}^d)}$ and $\forall i\in I$:
$$h(x_i)=f_i-\frac{\mu}{2}\norm{x_i}^2_2, \quad g_i-\mu x_i\in\partial h(x_i).$$
Hence, the function $h\in\mathcal{F}_{0,L-\mu}{(\mathbb{R}^d)}$ interpolates the set $\left\{\left(x_i,g_i-\mu x_i,f_i-\frac{\mu}{2}\norm{x_i}^2_2\right)\right\}_{i\in I}$.

[$(a)\Leftarrow(b)$] If such a $h\in\mathcal{F}_{0,L-\mu}{(\mathbb{R}^d)}$ exists and satisfies the interpolation conditions $(b)$, then one can reconstruct a function $f(x)=h(x)+\frac{\mu}{2}\norm{x}_2^2$, $f\in\mathcal{F}_{\mu,L}{(\mathbb{R}^d)}$ which interpolates the set $\left\{\left(x_i,g_i,f_i\right)\right\}_{i\in I}$.\qed
\end{proof}
The effect of conjugation in step (ii) of the reduction procedure is precisely {described} in the following lemma.
\begin{lemma}Consider a finite set $\left\{(x_i,g_i,f_i)\right\}_{i\in I}$ with $x_i,g_i\in\mathbb{R}^d$ and $f_i\in\mathbb{R}$. The following propositions are equivalent for any constant $0<L\leq+\infty$:
\begin{itemize}
\item[(a)] $\left\{\left(x_i,g_i,f_i\right)\right\}_{i\in I}$ is $\mathcal{F}_{0,L}$-interpolable,
\item[(b)] $\left\{\left(g_i, x_i,x_i^{\!\top}  g_i-f_i\right)\right\}_{i\in I}$ is $\mathcal{F}_{1/L,\infty}$-interpolable.
\end{itemize}
\label{thm:eqCondInterp1}
\end{lemma}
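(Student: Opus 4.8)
The plan is to read this lemma as the interpolation-level shadow of Theorem~\ref{thm:ConjStrCvxLLipsch}, exploiting that Legendre--Fenchel conjugation is an involution on $\mathcal{F}_{0,\infty}(\mathbb{R}^d)$ and that it exchanges the roles of coordinates and subgradients. The only two ingredients I would use are Theorem~\ref{thm:ConjStrCvxLLipsch} itself and the equivalence recalled just before it: for $f\in\mathcal{F}_{0,\infty}(\mathbb{R}^d)$, the conditions $g\in\partial f(x)$, $x\in\partial f^*(g)$ and $f(x)+f^*(g)=g^{\!\top}x$ are all equivalent. Throughout, the case $L=+\infty$ is handled by the convention $1/(+\infty)=0$, for which Theorem~\ref{thm:ConjStrCvxLLipsch} degenerates into the statement that conjugation maps $\mathcal{F}_{0,\infty}(\mathbb{R}^d)$ onto itself (Theorem~12.2 of~\cite{Book:Rockafellar}), so the argument below goes through unchanged.

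For the implication $(a)\Rightarrow(b)$: assume $f\in\mathcal{F}_{0,L}(\mathbb{R}^d)$ interpolates $\{(x_i,g_i,f_i)\}_{i\in I}$, so that $g_i\in\partial f(x_i)$ and $f(x_i)=f_i$ for every $i$. Since $0<L\le+\infty$, Theorem~\ref{thm:ConjStrCvxLLipsch} gives $f^*\in\mathcal{F}_{1/L,\infty}(\mathbb{R}^d)$. From $g_i\in\partial f(x_i)$, the conjugate subgradient relation yields $x_i\in\partial f^*(g_i)$ together with $f^*(g_i)=g_i^{\!\top}x_i-f(x_i)=x_i^{\!\top}g_i-f_i$. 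Hence $f^*$ interpolates $\{(g_i,x_i,x_i^{\!\top}g_i-f_i)\}_{i\in I}$, which is exactly condition $(b)$.

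For the implication $(b)\Rightarrow(a)$: assume $h\in\mathcal{F}_{1/L,\infty}(\mathbb{R}^d)$ interpolates $\{(g_i,x_i,x_i^{\!\top}g_i-f_i)\}_{i\in I}$, so that $x_i\in\partial h(g_i)$ and $h(g_i)=x_i^{\!\top}g_i-f_i$. Applying Theorem~\ref{thm:ConjStrCvxLLipsch} to the closed proper convex function $h^*$, and using the involution $h^{**}=h$, I obtain $h^*\in\mathcal{F}_{0,L}(\mathbb{R}^d)$. The conjugate subgradient relation then gives $g_i\in\partial h^*(x_i)$ and $h^*(x_i)=x_i^{\!\top}g_i-h(g_i)=f_i$, so $h^*$ interpolates $\{(x_i,g_i,f_i)\}_{i\in I}$, i.e.\ condition $(a)$ holds. (As a sanity check, note that the data map $\{(x_i,g_i,f_i)\}_{i\in I}\mapsto\{(g_i,x_i,x_i^{\!\top}g_i-f_i)\}_{i\in I}$ is an involution, mirroring $f\mapsto f^*$.)

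I do not foresee a genuine obstacle here; the lemma is essentially a translation of Theorem~\ref{thm:ConjStrCvxLLipsch} to finite data. The only points requiring care are the bookkeeping of which functional class each conjugate lands in — in particular the $L=+\infty$ degeneration through the convention $1/(+\infty)=0$ — and reading off the transformed function value $x_i^{\!\top}g_i-f_i$ directly from the Fenchel equality $f(x_i)+f^*(g_i)=g_i^{\!\top}x_i$ rather than positing it, since that equality is precisely what pins down the third coordinate of the transformed triples.
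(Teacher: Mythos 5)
Your proof is correct and follows essentially the same route as the paper: both directions rest on Theorem~\ref{thm:ConjStrCvxLLipsch} combined with the conjugate subgradient equivalences $g\in\partial f(x) \Leftrightarrow x\in\partial f^*(g) \Leftrightarrow f(x)+f^*(g)=g^{\!\top}x$ (Theorem~23.5 of \cite{Book:Rockafellar}), with the reverse implication obtained by conjugating back and invoking the involution on proper closed convex functions. Your explicit reading of the transformed function value $x_i^{\!\top}g_i-f_i$ from the Fenchel equality is exactly what the paper's proof does implicitly.
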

\begin{proof}
{}[$(a)\Rightarrow(b)$] It follows from Theorem~\ref{thm:ConjStrCvxLLipsch} that if there exists $f\in\mathcal{F}_{0,L}{(\mathbb{R}^d)}$ then $f^*$ exists and satisfies $f^*\in\mathcal{F}_{1/L,\infty}{(\mathbb{R}^d)}$. In addition to that, if both $f\in\mathcal{F}_{0,L}{(\mathbb{R}^d)}$ and $f^*$ exists, then they satisfy $\forall i\in I$ the three conditions ({e.g., Theorem~23.5 of \cite{Book:Rockafellar}}):
$$ f(x_i) + f^*(g_i) = g_i^{\!\top}  x_i,\quad g_i\in\partial f(x_i), \quad x_i\in\partial f^*(g_i).$$

[$(b)\Rightarrow(a)$] If a function $f^*\in\mathcal{F}_{1/L,\infty}{(\mathbb{R}^d)}$ exists and satisfies the interpolation conditions (b), then the conjugate $f^{**}$ (which is convex, proper and closed by construction) satisfies $f^{**}\in\mathcal{F}_{0,L}{(\mathbb{R}^d)}$ by Theorem~\ref{thm:ConjStrCvxLLipsch}, {as well as} the interpolation conditions ({e.g., Theorem~23.5 of \cite{Book:Rockafellar}}) $\forall i\in I$: 
$$ f^{**}(x_i) + f^*(g_i) = g_i^{\!\top}  x_i,\quad g_i\in\partial f^{**}(x_i), \quad x_i\in\partial f^*(g_i).$$
We obtain the desired result by choosing $f=f^{**}$.\qed
\end{proof}
We are now properly armed in order to define all interpolation equivalences. Let us now use steps (i), (ii) and (iii) to prove the main theorem of this section.

\begin{theorem}[$\mathcal{F}_{\mu,L}$-interpolability] Set $\left\{(x_i,g_i,f_i)\right\}_{i\in I}$ is $\mathcal{F}_{\mu,L}$-interpolable if and only if the following set of conditions holds for every pair of indices $i \in I$ and $j \in I$
\begin{align}
f_i - f_j - g_j^{\!\top}  (x_i-x_j) \geq \frac{1}{2(1-\mu/L)}\left( \frac{1}{L}\norm{g_i-g_j}_2^2 + \mu \norm{x_i-x_j}_2^2 - 2\frac{\mu}{L} (g_j-g_i)^{\!\top}  (x_j-x_i)\right).\label{eq:Cond_Lmu_interp}
\end{align}
\label{thm:gencvxcomp}
\end{theorem}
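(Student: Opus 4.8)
The plan is to run the three reductions (i)--(iii) sketched just before the statement and then close the argument with Theorem~\ref{th:convexinterp}. First, since $0\le\mu<L$ forces $L-\mu>0$, Lemma~\ref{thm:eqCondInterp2} applied with curvature parameter $\mu$ shows that $\mathcal{F}_{\mu,L}$-interpolability of $\left\{(x_i,g_i,f_i)\right\}_{i\in I}$ is equivalent to $\mathcal{F}_{0,L-\mu}$-interpolability of the shifted set $\left\{\bigl(x_i,\,g_i-\mu x_i,\,f_i-\tfrac{\mu}{2}\norm{x_i}_2^2\bigr)\right\}_{i\in I}$. Second, because $L-\mu>0$, Lemma~\ref{thm:eqCondInterp1} turns this into $\mathcal{F}_{1/(L-\mu),\infty}$-interpolability of the conjugated set, obtained by swapping the roles of the (shifted) points and subgradients and updating the function values via the $x^{\!\top}g$ terms; explicitly one gets the triples $\bigl(g_i-\mu x_i,\ x_i,\ x_i^{\!\top}g_i-f_i-\tfrac{\mu}{2}\norm{x_i}_2^2\bigr)$. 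Third, Lemma~\ref{thm:eqCondInterp2} applied once more, now with curvature parameter $1/(L-\mu)$, reduces this to $\mathcal{F}_{0,\infty}$-interpolability of a third, fully reduced set of triples $\left\{(\tilde x_i,\tilde g_i,\tilde f_i)\right\}_{i\in I}$.

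By Theorem~\ref{th:convexinterp}, this last set is $\mathcal{F}_{0,\infty}$-interpolable if and only if $\tilde f_i\ge \tilde f_j+\tilde g_j^{\!\top}(\tilde x_i-\tilde x_j)$ holds for every (ordered) pair $i,j\in I$, which matches the ``every pair of indices'' quantifier in the statement. It then only remains to substitute the three successive changes of variables back into these inequalities and simplify. The $\tilde f_i-\tilde f_j$ part reproduces the affine expression $f_i-f_j-g_j^{\!\top}(x_i-x_j)$ on the left-hand side, while the quadratic contributions --- the two terms $\tfrac{\mu}{2}\norm{\cdot}_2^2$ and $\tfrac{1}{2(L-\mu)}\norm{\cdot}_2^2$ coming from the two curvature subtractions, together with the bilinear $x^{\!\top}g$ terms introduced by conjugation --- combine, after expansion and regrouping, into the right-hand side of~\eqref{eq:Cond_Lmu_interp}; the coefficient $\tfrac{1}{2(1-\mu/L)}$ there is just $\tfrac{L}{2(L-\mu)}$ rewritten using the identity $\tfrac{1}{L-\mu}=\tfrac1L\cdot\tfrac{1}{1-\mu/L}$.

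I expect the genuine work to be this last bookkeeping: keeping straight which quantity plays the role of a ``point'' and which a ``gradient'' after the conjugation swap, and correctly expanding $\norm{(g_i-\mu x_i)-(g_j-\mu x_j)}_2^2$ and the relevant inner products so that the three quadratic forms $\norm{g_i-g_j}_2^2$, $\norm{x_i-x_j}_2^2$ and $(g_j-g_i)^{\!\top}(x_j-x_i)$ appear with exactly the coefficients in~\eqref{eq:Cond_Lmu_interp}. A second point deserving explicit verification is the boundary regime: for $L=+\infty$ one should check, with the conventions $1/{+\infty}=0$ and $+\infty-\mu=+\infty$, that step (ii) and the second curvature subtraction in step (iii) both collapse to identities, and that~\eqref{eq:Cond_Lmu_interp} degenerates to the strong-convexity subgradient inequality $f_i-f_j-g_j^{\!\top}(x_i-x_j)\ge\tfrac{\mu}{2}\norm{x_i-x_j}_2^2$; similarly the case $\mu=0$ should recover the known $\mathcal{F}_{0,L}$ condition. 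Since each of the three lemmas used is an equivalence, no converse argument beyond this substitution is needed.
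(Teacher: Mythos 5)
Your proposal is correct and follows essentially the same route as the paper: the same chain of curvature subtraction, conjugation, and curvature subtraction, closed by the convex interpolation theorem and back-substitution (the paper merely adds one further conjugation step to rewrite the reduced triples before reading off the inequalities, which only amounts to swapping the roles of $i$ and $j$ in the pairwise conditions). The level of detail you leave to ``bookkeeping'' matches what the paper itself leaves as a straightforward check.
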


\begin{proof}
We begin by showing the equivalence between the following propositions:
\begin{itemize}
\item[(a)] $\left\{\left(x_i,g_i,f_i\right)\right\}_{i\in I}$ is $\mathcal{F}_{\mu,L}$-interpolable,\vspace{0.3cm}
\item[(b)] $\left\{\left(x_i,g_i-\mu x_i,f_i-\frac{\mu}{2}\norm{x_i}_2^2\right)\right\}_{i\in I}$ is $\mathcal{F}_{0,L-\mu}$-interpolable,\vspace{0.3cm}
\item[(c)] $\left\{\left(g_i-\mu x_i,x_i,x_i^{\!\top}  g_i-f_i-\frac{\mu}{2}\norm{x_i}_2^2\right)\right\}_{i\in I}$ is $\mathcal{F}_{1/(L-\mu),\infty}$-interpolable,\vspace{0.3cm}
\item[(d)] $\left\{\left(g_i-\mu x_i,\frac{Lx_i}{L-\mu}-\frac{g_i}{L-\mu},\frac{Lx_i^{\!\top}  g_i}{L-\mu}-f_i-\frac{L\mu\norm{x_i}_2^2}{2(L-\mu)}-\frac{\norm{g_i}_2^2}{2(L-\mu)}\right)\right\}_{i\in I}$ is $\mathcal{F}_{0,\infty}$-interpolable,\vspace{0.3cm}
\item[(e)] $\left\{\left(\frac{Lx_i}{L-\mu}-\frac{g_i}{L-\mu}, g_i-\mu x_i, \frac{\mu x_i^{\!\top}  g_i}{L-\mu}+f_i-\frac{L\mu \norm{x_i}_2^2}{2(L-\mu)}- \frac{\norm{g_i}_2^2}{2(L-\mu)}\right)\right\}_{i\in I}$ is $\mathcal{F}_{0,\infty}$-interpolable.
\end{itemize}
Both $(a)\Leftrightarrow(b)$ and $(c)\Leftrightarrow(d)$ are direct application{s} of Lemma~\ref{thm:eqCondInterp2}, whereas both $(b)\Leftrightarrow(c)$ and $(d)\Leftrightarrow(e)$ are direct application{s} of Lemma~\ref{thm:eqCondInterp1}.
Theorem~\ref{thm:gencvxcomp} follows from equivalence between propositions (a) and (e) applied to the necessary and sufficient conditions for convex interpolation of Theorem~\ref{thm:CvxComp}. Finally, it is straightforward to check that condition (e) reduces to the statement of the {t}heorem. 
\qed
\end{proof}
\begin{remark}Note that one can also easily construct an interpolating function $f(x)$ for the original set of points from Theorem~\ref{thm:gencvxcomp}(a). It follows from Theorem~\ref{thm:CvxComp} that a possible interpolating function for the set $\left\{\left(\tilde{x}_i,\tilde{g}_i,\tilde{f}_i\right)\right\}_{i\in I}$ of Theorem~\ref{thm:gencvxcomp}(c) is given by
$$h(\tilde{x})=\max_i \left\{\tilde{f}_i + \tilde{g}^{\top\!}_{i}(\tilde{x}-\tilde{x}_i) + \frac{1}{2(L-\mu)}\norm{\tilde{x}-\tilde{x}_i}_2^2\right\}=\max_i  \ h_i(\tilde{x}).$$
This can be conjugated into an interpolating function $h^*(x)$ of the set given by Theorem~\ref{thm:gencvxcomp}(b). Using Theorem~16.5 from~\cite{Book:Rockafellar}, this can equivalently be written in the form
$$ h^*(x)=\text{conv}\left(h_i^*(x)\right),$$
where the $\mathrm{conv}$ operation takes the convex hull of the epigraphs of {the} $h_i^*$'s. Hence an interpolating function for the original set $\left\{\left(x_i,g_i,f_i\right)\right\}_{i\in I}$ is  given by
$$ f(x)=\text{conv}\left(h_i^*(x)\right)+\frac{\mu}{2}\norm{x}_2^2.$$
\end{remark}
{We provide an example of such an interpolat{ing} function on \figref{Fig:exInterp}.}

\begin{figure}[!ht]
\begin{center}
\begin{tabular}{cc}
   \includegraphics[scale=0.35]{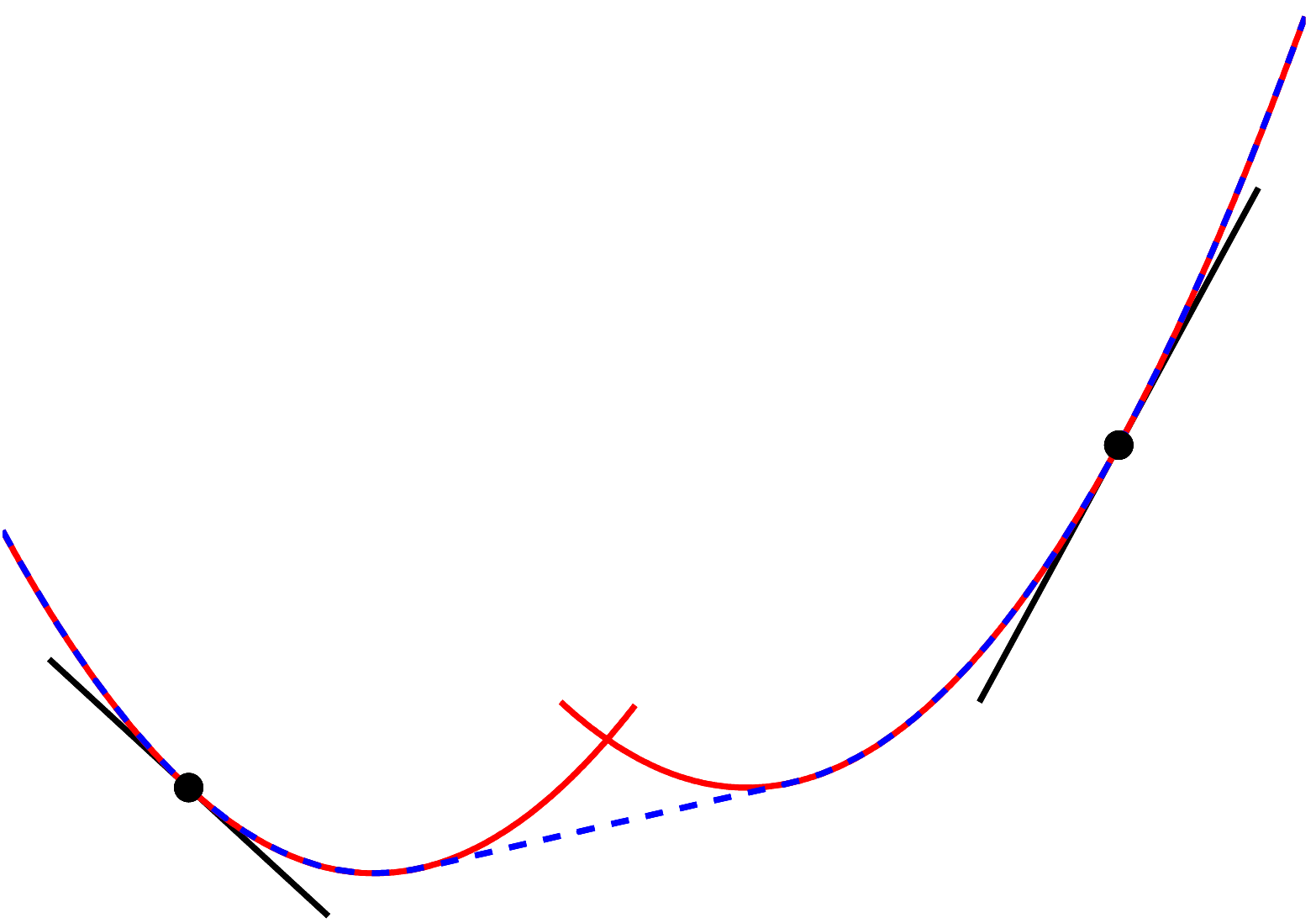} & \includegraphics[scale=0.35]{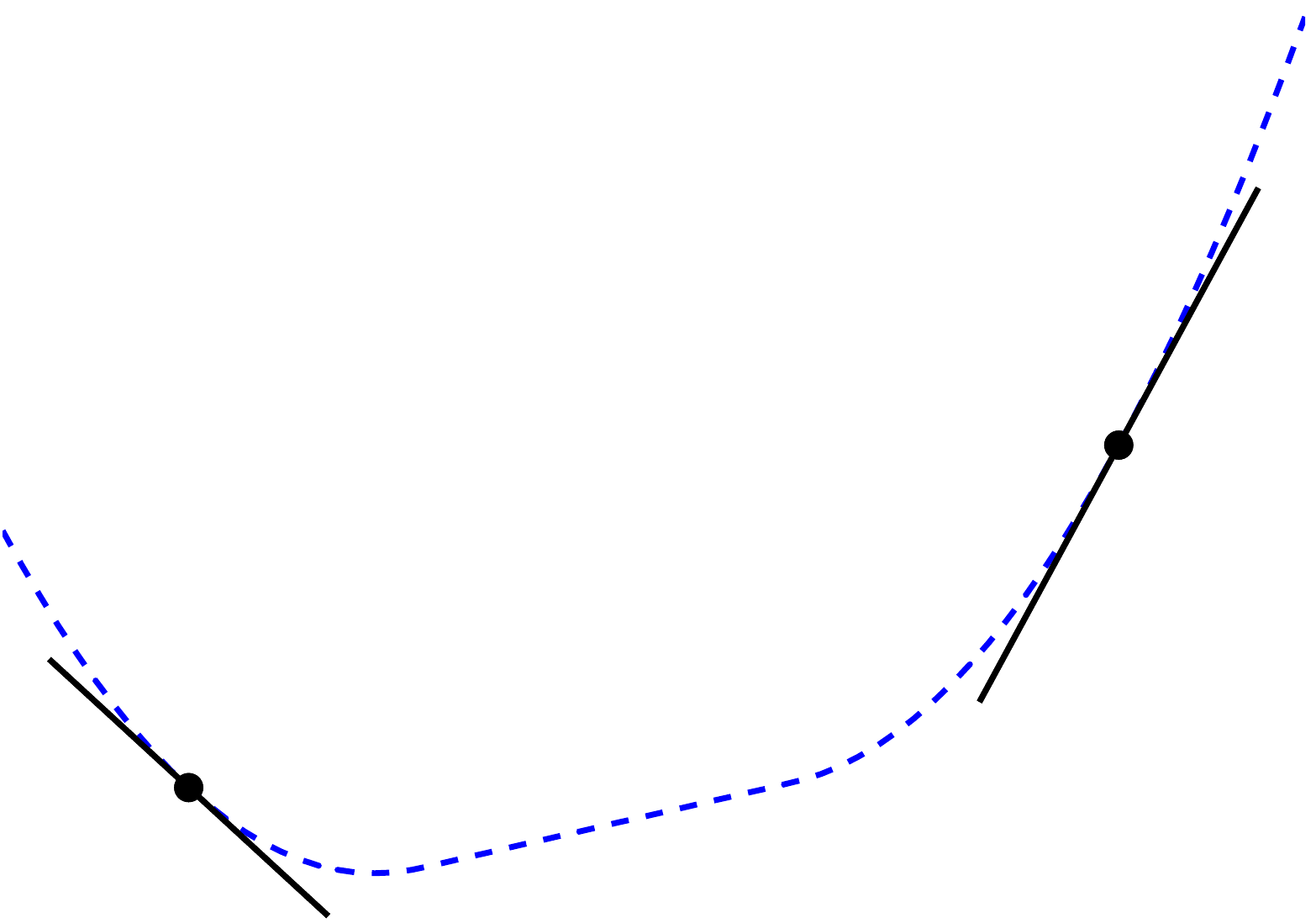}
\end{tabular}
\caption{{Example of an interpolat{ing} function; the {data triples} to be interpolated by a $1$-smooth convex function {are} $(x_1,g_1,f_1)=(2,2,3)$ and $(x_2,g_2,f_2)=(-3,-1,1)$. {Figure shows t}he upper{-}bounding quadratic functions $h_i^*(x)$ (red, left), the interpolat{ing} function $f(x)=\text{conv}\left(h_i^*(x)\right)$ (blue) and the gradients (black {tangents}).}} \label{Fig:exInterp}
\end{center}
\end{figure}
{It is straightforward to establish the equivalent interpolation conditions for both the smooth but non-strongly convex case ($\mu=0$) and the nonsmooth strongly convex case ($L=+\infty$).} In the first case --- given by Corollary~\ref{corr:Linterp} --- we find the discrete version of the well-known inequality characterizing $L$-smooth convex functions, which turns out to be necessary and sufficient{:} $$f(x)\geq f(y)+\nabla f^{\top\!}(y)(x-y)+\frac{1}{2L}\norm{\nabla f(x) - \nabla f(y)}_2^2.$$
\begin{corollary} The finite set $\left\{(x_i,g_i,f_i)\right\}_{i\in I}$ is $\mathcal{F}_{0,L}$-interpolable if and only if
\begin{align*}
f_i \geq f_j + g_j^{\!\top}  (x_i-x_j) + \frac{1}{2L}\norm{g_i-g_j}_2^2, \quad \quad \forall i,j\in  I.
\end{align*}
\label{corr:Linterp}
\end{corollary}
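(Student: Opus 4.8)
The plan is to read off Corollary~\ref{corr:Linterp} as the special case $\mu=0$ of Theorem~\ref{thm:gencvxcomp}. Substituting $\mu=0$ into condition~\eqref{eq:Cond_Lmu_interp}, the prefactor $\frac{1}{2(1-\mu/L)}$ collapses to $\frac12$ while the two terms $\mu\norm{x_i-x_j}_2^2$ and $-2\frac{\mu}{L}(g_j-g_i)^{\!\top}(x_j-x_i)$ both vanish, so the condition reduces to
\[
f_i - f_j - g_j^{\!\top}(x_i-x_j) \;\geq\; \tfrac{1}{2L}\norm{g_i-g_j}_2^2 \qquad \forall\, i,j\in I,
\]
which, after moving $f_j$ and $g_j^{\!\top}(x_i-x_j)$ to the right-hand side, is exactly the inequality in the statement. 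Since Theorem~\ref{thm:gencvxcomp} has already established that this family of inequalities is both necessary and sufficient for $\mathcal{F}_{0,L}$-interpolability (for any $0<L\le+\infty$, with the usual convention $1/{+\infty}=0$), the corollary follows at once; at the endpoint $L=+\infty$ the extra quadratic term disappears and one recovers the convex interpolation condition of Theorem~\ref{thm:CvxComp}, as expected.

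Alternatively --- and this may be more transparent, since it isolates exactly which ingredients of Section~\ref{subsec:GeneralSmoothStrCvxInterp} are needed --- one can give a self-contained derivation using only Lemma~\ref{thm:eqCondInterp1} and Lemma~\ref{thm:eqCondInterp2}. With $\mu=0$, step (i) of the reduction is vacuous: by Lemma~\ref{thm:eqCondInterp1}, $\mathcal{F}_{0,L}$-interpolability of $\{(x_i,g_i,f_i)\}_{i\in I}$ is equivalent to $\mathcal{F}_{1/L,\infty}$-interpolability of $\{(g_i,\,x_i,\,x_i^{\!\top}g_i-f_i)\}_{i\in I}$; then Lemma~\ref{thm:eqCondInterp2}, applied with the substitutions $\mu\leftarrow 1/L$ and $L\leftarrow+\infty$, makes this equivalent to $\mathcal{F}_{0,\infty}$-interpolability of $\{(g_i,\,x_i-\tfrac{1}{L}g_i,\,x_i^{\!\top}g_i-f_i-\tfrac{1}{2L}\norm{g_i}_2^2)\}_{i\in I}$. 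Feeding these data triples into the characterization of Theorem~\ref{thm:CvxComp} and expanding the resulting scalar product --- the cross term $-\tfrac{1}{L}g_j^{\!\top}g_i$ combining with $\tfrac{1}{2L}\norm{g_i}_2^2+\tfrac{1}{2L}\norm{g_j}_2^2$ to produce $\tfrac{1}{2L}\norm{g_i-g_j}_2^2$ --- yields, after a relabeling $i\leftrightarrow j$, precisely the inequality of the corollary.

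There is no genuine obstacle here; the only point requiring care is the bookkeeping in the second route, namely keeping straight which component of a data triple plays the role of the point and which plays the role of the (sub)gradient after Legendre--Fenchel conjugation, and making sure the inequality is imposed for \emph{all} ordered pairs $(i,j)\in I\times I$ so that the concluding index swap is legitimate. Carrying out both derivations also serves as a useful consistency check on the algebra in the proof of Theorem~\ref{thm:gencvxcomp}.
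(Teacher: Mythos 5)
Your proposal is correct and matches the paper's (implicit) argument: the corollary is obtained simply by setting $\mu=0$ in Theorem~\ref{thm:gencvxcomp}, exactly as in your first route, and the paper offers no further proof beyond this specialization. Your second route is also sound, but it is not genuinely different --- it merely unwinds, for the case $\mu=0$, the same chain of Lemma~\ref{thm:eqCondInterp1}, Lemma~\ref{thm:eqCondInterp2} and Theorem~\ref{thm:CvxComp} used to prove Theorem~\ref{thm:gencvxcomp} itself, so it serves only as the consistency check you describe.
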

{Nonsmooth s}trongly convex interpolation conditions are given in Corollary~\ref{corr:muinterp}, which corresponds to the well-known inequality characterizing the subgradients of strongly convex functions.
\begin{corollary} The finite set $\left\{(x_i,g_i,f_i)\right\}_{i\in I}$ is $\mathcal{F}_{\mu,\infty}$-interpolable if and only if
\begin{align*}
f_i \geq f_j + g_j^{\!\top}  (x_i-x_j) + \frac{\mu}{2}\norm{x_i-x_j}_2^2, \quad \quad \forall i,j\in  I.
\end{align*}
\label{corr:muinterp}
\end{corollary}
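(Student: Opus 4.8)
The plan is to derive this corollary as the $L=+\infty$ specialization of Theorem~\ref{thm:gencvxcomp}, or, more transparently, to replay the chain of reductions used in its proof with one step removed. I would proceed as follows.

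First, I would invoke Lemma~\ref{thm:eqCondInterp2} with $L=+\infty$ (so that $L-\mu=+\infty$ under the stated convention $+\infty-\mu=+\infty$): the set $S=\left\{(x_i,g_i,f_i)\right\}_{i\in I}$ is $\mathcal{F}_{\mu,\infty}$-interpolable if and only if the shifted set $\widetilde S=\left\{\left(x_i,\,g_i-\mu x_i,\,f_i-\frac{\mu}{2}\norm{x_i}_2^2\right)\right\}_{i\in I}$ is $\mathcal{F}_{0,\infty}$-interpolable. Note that no conjugation step is needed, since the target class is already $\mathcal{F}_{0,\infty}$; this is precisely the simplification that makes the nonsmooth strongly convex case so much lighter than the general one.

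Next, I would apply Theorem~\ref{thm:CvxComp} (convex interpolation) to $\widetilde S$: it is $\mathcal{F}_{0,\infty}$-interpolable if and only if, for every pair $i,j\in I$,
\[
f_i-\tfrac{\mu}{2}\norm{x_i}_2^2 \;\geq\; f_j-\tfrac{\mu}{2}\norm{x_j}_2^2 + (g_j-\mu x_j)^{\!\top}(x_i-x_j).
\]
The last step is then a one-line algebraic rearrangement: moving the term involving $g_j$ to the left and collecting the quadratic terms on the right, the right-hand side becomes $\frac{\mu}{2}\bigl(\norm{x_i}_2^2-2x_j^{\!\top}x_i+\norm{x_j}_2^2\bigr)=\frac{\mu}{2}\norm{x_i-x_j}_2^2$, which yields exactly the claimed inequality.

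I do not anticipate any real obstacle. The only point requiring care is the bookkeeping of the $L=+\infty$ conventions ($1/(+\infty)=0$, $+\infty-\mu=+\infty$), which ensures both that Lemma~\ref{thm:eqCondInterp2} applies verbatim and that the conjugation-heavy detour in the proof of Theorem~\ref{thm:gencvxcomp} collapses. As a consistency check, one can alternatively read the result directly off \eqref{eq:Cond_Lmu_interp}: setting $1/L=0$ makes the factor $\frac{1}{2(1-\mu/L)}$ equal to $\frac12$ and annihilates both the $\frac1L\norm{g_i-g_j}_2^2$ term and the cross term $2\frac{\mu}{L}(g_j-g_i)^{\!\top}(x_j-x_i)$, leaving $f_i-f_j-g_j^{\!\top}(x_i-x_j)\geq\frac{\mu}{2}\norm{x_i-x_j}_2^2$. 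Finally, the construction in the Remark following Theorem~\ref{thm:gencvxcomp} specializes to produce an explicit interpolant, namely $f(x)=\mathrm{conv}_i\bigl(h_i^*(x)\bigr)+\frac{\mu}{2}\norm{x}_2^2$ with the $h_i^*$ built from the shifted data $\widetilde S$.
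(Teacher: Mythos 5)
Your proof is correct. The paper offers no explicit argument for this corollary (it is declared ``straightforward''), the intended justification being the direct specialization of Theorem~\ref{thm:gencvxcomp} at $L=+\infty$ under the conventions $1/{+\infty}=0$ and $+\infty-\mu=+\infty$ --- exactly what you give as your consistency check on \eqref{eq:Cond_Lmu_interp}. Your primary route (Lemma~\ref{thm:eqCondInterp2} with $L=+\infty$ to reduce to $\mathcal{F}_{0,\infty}$-interpolability, then Theorem~\ref{thm:CvxComp} and the completing-the-square rearrangement) is a mildly different, and in fact cleaner, way to make this rigorous: it bypasses the conjugation steps entirely, and thereby avoids the formally indeterminate expressions such as $\frac{L x_i}{L-\mu}$ that appear in the intermediate sets (d) and (e) of the general reduction chain when $L=+\infty$, while the paper's shortcut only specializes the final inequality, where these issues have already cancelled. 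Both routes buy the same result with the same machinery; yours is marginally more self-contained, the paper's is shorter. The algebra in your rearrangement checks out.
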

\begin{remark}
{Following the spirit of Remark 1, we note that Theorem~\ref{thm:gencvxcomp} {can readily} be extended to handle the finite (and continuous) integration problems for $L$-smooth $\mu$-strongly convex functions (i.e., interpolation without function values). Indeed, summing  inequality  \eqref{eq:Cond_Lmu_interp} from Theorem~\ref{thm:gencvxcomp} over each pair in the cyclic sequence $\{ (i_1,i_2), (i_2, i_3), \ldots, (i_{N-1},i_N), (i_N,i_1)\}$ produces a necessary inequality that does not  involve function values $f_i$. Moreover one can show that the set of those inequalities for all possible sequences is necessary and sufficient for finite convex integration of $L$-smooth $\mu$-strongly convex functions, generalizing the standard cyclic monotonicity conditions. As an illustration, note that the following inequality
\[
 (g_i-g_j)^{\top\!}(x_i-x_j)\geq \frac{1}{1+\mu/L}\left( \frac{1}{L}\norm{g_i-g_j}^2_2+\mu \norm{x_i-x_j}_2^2\right),
\]
that is standard in the analysis of gradient methods on smooth strongly convex functions (see {e.g.,} Theorem 2.1.12 of~\cite{Book:Nesterov}), corresponds to cycles of length $2$ (i.e., cyclic sequences $\{ (i,j), (j,i) \}$).
The set of all such inequalities is necessary but not sufficient, as it omits longer cycles.}
\end{remark}

\section{A convex formulation for performance estimation}
\label{sec:Peps}
As explained in the introduction, our performance estimation problem can now be expressed in terms of the iterates and optimal point $\left\{x_i,g_i,f_i\right\}_{i\in \left\{0,{\ldots},N,*\right\}}$ only, using the interpolation conditions given by Theorem~\ref{thm:gencvxcomp}.

As our class of functions $\mathcal{F}_{\mu,L}{(\mathbb{R}^d)}$ and the first-order methods we study are invariant with respect to both an additive shift in the function values and a translation in their domain, we can assume without loss of generality {that} $x_*=0$ and $f_*=0$, which will simplify our derivations. We {can} also assume $g_*=0$, from {the} optimality conditions {of unconstrained optimization}. The problem can now be stated in its finite{-}dimensional formulation:

\begin{align*}
w{_{\mu,L}^{(d)}} (R, \mathcal{M}, N, \mathcal{P})=\sup_{\left\{x_i,g_i,f_i\right\}_{i\in I} {\in \bigl(\mathbb{R}^{d} \times \mathbb{R}^{d} \times \mathbb{R} \bigr)^{N+2}}}  & \mathcal{P}\left(\left\{x_i,g_i,f_i\right\}_{i\in I}\right), \tag{d-PEP}\label{Intro:dPEP2} \\[0.3cm]
\text{ {such that} } & \left\{x_i,g_i,f_i\right\}_{i\in I} \text{is $\mathcal{F}_{\mu,L}$-interpolable},\\[0.2cm]
& x_1, {\ldots}, x_N \text{ is generated {from $x_0$ by method $\mathcal{M}$ with \eqref{eq:Mxg}}}, \\[0.2cm]
& {\{x_*, g_*, f_*\} = \{0^d, 0^d, 0 \} \text{ and }} \norm{x_0 - x_*}_2 \leq R.
\end{align*}

{Problem \eqref{Intro:dPEP2} is an instance of \eqref{Intro:dPEP} where the function class $\mathcal{F}$ is chosen to be  $\mathcal{F}_{\mu,L}(\mathbb{R}^d)$, the set of  $d$-dimensional $L$-smooth $\mu$-strongly convex  functions, hence we have $w(\mathcal{F}_{\mu,L}(\mathbb{R}^d), R, \mathcal{M}, N, \mathcal{P})  = w{_{\mu,L}^{(d)}}(R, \mathcal{M}, N, \mathcal{P})$. Interestingly, in most situations of interest, quantity $w{_{\mu,L}^{(d)}}(R, \mathcal{M}, N, \mathcal{P})$ is monotonically increasing with $d$, as a higher dimensional function can usually mimic a lower dimensional one (see  Theorem~\ref{thm:sdp_pep} and subsequent results for a discussion on finite convergence of this sequence).}

{Finally, note that problem \eqref{Intro:dPEP2}} is  not convex, as it involves several non-convex quadratic constraints {({e.g.,}\@ $g_j^{\top\!} x_i$ terms in the interpolation conditions)}. In the next section, we show how~\eqref{Intro:dPEP2} can be cast as a convex semidefinite program~\cite{Vandenberghe94semidefiniteprogramming} when dealing with a certain class of first-order black-box methods, those {based on} fixed steps.

\subsection{Fixed-step first-order methods}
We hereby restrict ourselves to the class of fixed-step first-order methods{, where each iterate is obtained by adding a series of gradients steps with fixed step sizes to the starting point $x_0$.
\begin{definition}\label{def:GenFirstOrder} 
A method $\mathcal{M}$ is called a \emph{fixed-step method} if its iterates are computed according to
\[ x_i=x_0-\sum_{k=0}^{i-1} h_{i,k} g_k.\]
with fixed scalar coefficients $h_{i,k}$.
\end{definition}
}
{A fixed-step method performing $N$ steps is completely defined by the lower triangular $N \times N$ matrix $H = \left\{h_{i,k}\right\}_{1 \le i \le N, 0 \le k \le N-1}$ (where $h_{i,k}$ is defined to be zero if $k \ge i$).}
Note that many classical methods such as {the} gradient method {with constant step size} (GM) and {the} fast gradient method (FGM) are included in this class of algorithms (see the details in Section~\ref{sec:numerics}).

\subsection{A convex reformulation using a Gram matrix}

In order to obtain a convex formulation for~\eqref{Intro:dPEP2}, we {introduce} a Gram matrix {to describe the iterates and their gradients}. {Denoting}
\[ P=[g_0 \ g_1 \ \hdots \ g_N \ x_0] \]
{we define the symmetric $(N+2) \times (N+2)$ Gram matrix $G=P^{\top\!}P \in \mathbb{S}^{N+2}$, which is equivalent to \[ G = \left\{  G_{i,j} \right\}_{0 \le i, j \le N}  \text{ with }\left\{ \begin{array}{rl}  G_{i,j} =  g_i^\top g_j  & \text{ for any $0 \le i, j \le N$},  \\ 
G_{N+1,j} = x_0^\top g_j  & \text{ for any $0 \le j \le N$}, \\
G_{i,N+1} = g_i^\top x_0  & \text{ for any $0 \le i \le N$}, \\
G_{N+1,N+1} = x_0^\top x_0 & \end{array} \right.  \]}
{(note that the size of this matrix does not depend on the dimension of iterate $x_0$ and gradients $g_i$).

The constraints in problem \eqref{Intro:dPEP2} can now be entirely formulated in terms of the entries of the Gram matrix $G$ along with the function values $f_i$. Indeed all iterates apart from $x_0$ can be substituted out of the formulation using Definition~\ref{def:GenFirstOrder} of a fixed-step method, and the resulting formulation only involves function values $f_i$ and inner products between $x_0$ and all gradients $g_i$.

Note that the initial iterate $x_0$ and successive gradients $g_i$ of any solution to problem \eqref{Intro:dPEP2} can be transformed into a symmetric and positive semidefinite Gram matrix $G$. Moreover, since vectors $x_0$ and $g_i$ belong to $\mathbb{R}^d$, matrix $G$ has rank at most $d$. In the other direction, it is easy to see that any symmetric and positive semidefinite Gram matrix $G$ of rank at most $d$ can be converted back (using Cholesky decomposition for example) into $N+2$ vectors $x_0 \in \mathbb{R}^d$ and $g_i  \in \mathbb{R}^d$ which describe the initial iterate and successive gradients of a $d$-dimensional function (this transformation is however not unique). From those observations we can anticipate that an equivalent formulation of \eqref{Intro:dPEP2}  will rely on imposing that $G$ is symmetric and positive semidefinite, which is a convex constraint and will naturally lead to  a semidefinite program.}

{\subsection{Exact worst-case performance of fixed-step first-order methods as a semidefinite program}}

For notational convenience, we define vectors $h_i\in\mathbb{R}^{N+2}$ for any $i$ between $0$ and $N$ and  $h_*\in\mathbb{R}^{N+2}$ as follows{
$$h_i^{\top\!}=[ -h_{i,0} \ -\!h_{i,1} \ \hdots \ \ -\!h_{i,i-1} \ 0 \ \hdots \ 0 \ 1],\quad  h_*^{\top\!}=[0 \ \hdots  \ 0],$$}{so that we have }$x_i=P h_i$.  In order to lighten the notations we also define $u_i=e_{i+1}\in\mathbb{R}^{N+2}$, the canonical basis vectors, and $u_*$ the vector of zeros. Using those notations, we rewrite the interpolation constraints \eqref{eq:Cond_Lmu_interp} coming from Theorem~\ref{thm:gencvxcomp} in the following form:
\begin{align*}
 f_i\geq f_j &+ \frac{L}{L-\mu} (u_j^{\top\!} G h_i-u_j^{\top\!} G h_j) + \frac{1}{2(L-\mu)}(u_i-u_j)^{\top\!}G(u_i-u_j)\\
 & + \frac{\mu}{L-\mu} (u_i^{\top\!} G h_j-u_i^{\top\!} G h_i) + \frac{L\mu}{2(L-\mu)} (h_i-h_j)^{\top\!}G(h_i-h_j), \quad \quad \text{ for all } i,j\in I.
\end{align*}
{We can equivalently formulate all constraints using the trace operator, and add the distance constraint $\norm{x_0 - x_*}_2 \leq R$ on the starting point as well as the positive semidefiniteness constraint for $G$. Defining matrices $A_{ij}$ and $A_R$ in the following way}
\begin{align*}
2A_{ij}=& \ \frac{L}{L-\mu}\left( u_{j}(h_i-h_j)^{\top\!}+(h_i-h_j)u_{j}^{\top\!}\right)+\frac{1}{L-\mu}(u_i-u_j)(u_i-u_j)^{\top\!}\\ & \ +\frac{\mu}{L-\mu}\left( u_{i}(h_j-h_i)^{\top\!}+(h_j-h_i)u_{i}^{\top\!}\right)+\frac{L\mu}{L-\mu}(h_i-h_j)(h_i-h_j)^{\top\!},  \quad {\text{ for all } i,j \in I,}\\
A_R=& \ u_{N+1} u_{N+1}^{\top\!}.
\end{align*}
{we obtain the following compact formulation for the feasible region that is \emph{linear} in its variables $f \in \mathbb{R}^{N+1}$ and $G \in \mathbb{S}^{N+2}$}
\begin{align*}
 f_j-f_i + \Tr{ G A_{ij}} &\leq 0, \quad \quad \text{ for all } i,j\in I,\\
 \Tr{G A_R} - R^2 &\leq 0,\\
 G&\succeq 0 \;.
\end{align*}
{From the discussion at the end of the previous section, it is easy to see that any $d$-dimensional function $f$ and starting point $x_0 \in \mathbb{R}^d$ produce a feasible solution $(f,G)$ where matrix $G$ has rank at most $d$. On the other hand, any feasible solution $(f,G)$ where $G$ has rank at most $d$ can be interpolated into a $d$-dimensional function $f\in\mathcal{F}_{\mu,L}{(\mathbb{R}^d)}$ and a starting point $x_0 \in \mathbb{R}^d$. Indeed, matrix $G = P^\top P$ provides $x_0 \in \mathbb{R}^d$ and $N+1$ successive  gradients $g_i \in \mathbb{R}^d$, while the other iterates $x_i$ derive from the definition of the method. Our interpolating conditions ensure that a function compatible with these data triples $\left\{x_i,g_i,f_i\right\}_{i\in I}$ exists.}

{Considering finally} the performance criterion $\mathcal{P}$, we observe that any concave semidefinite-representable function {in} $G$ and $f$ leads to a worst-case estimation problem that can be cast as a convex semidefinite optimization problem (see {e.g.,} \cite{ben2001lectures}){, and that such a criteria does not depend on the dimension $d$ of the functions}. In particular, linear functions of the entries of $f$ and $G$ are suitable. Classical performance criteria such as $f(x_N)-f_*$, $\norm{\nabla f(x_N)}_2^2$ and $\norm{x_N-x_*}_2^2$ are indeed covered by this formulation. We focus below on the case of a linear performance criterion, but note that other criteria can be useful (see for example a concave piecewise linear criteria used in Section~\ref{subsec:numerics_mingrad}). 

{We can now state the main result of this paper.}

{\begin{theorem} \label{thm:sdp_pep} Consider the class $\mathcal{F}_{\mu,L}{(\mathbb{R}^{d})}$ of $L$-smooth $\mu$-strongly convex functions with $0\leq \mu < L \le \infty$, a fixed-step first-order method that computes $N$ iterates according to matrix $H \in \mathbb{R}^{N \times N}$, and a performance criterion $\mathcal{P}_{b,C}(f,G) = b^\top f + \Tr{CG}$ that depends linearly on the function values at those iterates and quadratically on the iterates and their gradients ($b\in\mathbb{R}^{N+1}$ and $C\in\mathbb{S}^{N+2}$). 

\noindent If $N \le d-2$, the worst-case performance after $N$ iterations of method $H$ applied to some function in  $\mathcal{F}_{\mu,L}{(\mathbb{R}^{d})}$ is equal to the optimal value of the following semidefinite program
\begin{align}
w^{sdp}_{\mu,L}(R,H,N,b,C) =\sup_{G\in\mathbb{S}^{N+2}, f\in\mathbb{R}^{N+1}}  b^{{\top\!}} f + \Tr{CG} &\tag{sdp-PEP}\label{PEPS:Pep_sdp}\\
\text{ {such that} } f_j-f_i + \Tr{ G A_{ij}} &\leq 0, \quad \quad i,j\in I,\notag\\
 \Tr{G A_R} - R^2 &\leq 0,\notag\\
 G&\succeq 0, \notag
\end{align}
with matrices $A_{ij}$ and $A_R$ as defined above. In others words,
\[ w^{sdp}_{\mu,L}(R,H,N,b,C)= w^{(d)}_{\mu,L}(R, \mathcal{M}, N, \mathcal{P}_{b,C}) \text{ for any $d \ge N+2$.} \]
\end{theorem}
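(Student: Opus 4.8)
The plan is to show that, for $d\ge N+2$, the feasible set of \eqref{PEPS:Pep_sdp} is in value-preserving bijection with that of \eqref{Intro:dPEP2}, so that the two problems share the same optimal value; the whole argument rests on Theorem~\ref{thm:gencvxcomp} together with the elementary fact that positive semidefinite matrices of rank at most $d$ are exactly the Gram matrices of $d$-tuples of vectors.

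First I would start from \eqref{Intro:dPEP2} and eliminate every iterate except $x_0$: by Definition~\ref{def:GenFirstOrder} each $x_i$ equals $Ph_i$ with $P=[g_0\ \cdots\ g_N\ x_0]$, while the conventions $x_*=0$, $f_*=0$, $g_*=0$ correspond to $h_*=0$, $u_*=0$. After this substitution the only remaining variables are $g_0,\dots,g_N$, $x_0$, and $f_0,\dots,f_N$. I would then replace the $\mathcal{F}_{\mu,L}$-interpolability constraint by the explicit family \eqref{eq:Cond_Lmu_interp} of Theorem~\ref{thm:gencvxcomp}, written for every pair $i,j\in I$ (including pairs involving $*$). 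Each such inequality is affine in the $f_i$ and quadratic in the columns of $P$; similarly $\|x_0-x_*\|_2^2=x_0^\top x_0$, and by hypothesis the criterion is $\mathcal{P}_{b,C}=b^\top f+\Tr{CG}$ with the quadratic dependence on iterates and gradients absorbed into $\Tr{CG}$. Since every homogeneous quadratic form $\sum_{k,\ell}c_{k\ell}\,p_k^\top p_\ell$ in the columns $p_k$ of $P$ equals $\Tr{CG}$ with $G=P^\top P$, this bookkeeping produces exactly the matrices $A_{ij}$ and $A_R$ defined above, turning the constraints into $f_j-f_i+\Tr{GA_{ij}}\le0$, $\Tr{GA_R}-R^2\le0$, with the side information that $G$ is symmetric, positive semidefinite, and of rank at most $d$.

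The core step is the two-sided correspondence between such data and feasible pairs $(f,G)$. In one direction, any $f\in\mathcal{F}_{\mu,L}(\mathbb{R}^d)$ and $x_0\in\mathbb{R}^d$ feasible for \eqref{Intro:dPEP2} yield, through $G=P^\top P$, a pair $(f,G)$ with $G\succeq0$, $\rank G\le d$, satisfying all trace constraints and attaining the same objective --- this uses only the \emph{necessity} in Theorem~\ref{thm:gencvxcomp}. Conversely, given any $(f,G)$ feasible for \eqref{PEPS:Pep_sdp} with $\rank G\le d$, factor $G=P^\top P$ with $P\in\mathbb{R}^{d\times(N+2)}$ (an eigendecomposition or Cholesky factorization, padded with zero rows when $\rank G<d$), read off $x_0$ and $g_0,\dots,g_N$ from the columns of $P$, and define the other iterates via Definition~\ref{def:GenFirstOrder}. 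The trace inequalities now state exactly that the triples $\{x_i,g_i,f_i\}_{i\in I}$ --- with the starred triple equal to $0$ --- satisfy \eqref{eq:Cond_Lmu_interp}, so the \emph{sufficiency} in Theorem~\ref{thm:gencvxcomp} provides an interpolating $f\in\mathcal{F}_{\mu,L}(\mathbb{R}^d)$; together with $\|x_0\|_2\le R$ this is a point feasible for \eqref{Intro:dPEP2} with the same objective. Hence the two feasible sets are in objective-preserving bijection up to the rank-$d$ restriction on $G$.

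Finally, for $d\ge N+2$ the constraint $\rank G\le d$ is vacuous, since every $G\in\mathbb{S}^{N+2}$ has rank at most $N+2\le d$; thus \eqref{PEPS:Pep_sdp}, which carries no rank constraint, has the same feasible set and objective as \eqref{Intro:dPEP2}, and taking suprema on both sides --- harmless even when they are not attained, because the correspondence is pointwise objective-preserving --- gives $w^{sdp}_{\mu,L}(R,H,N,b,C)=w^{(d)}_{\mu,L}(R,\mathcal{M},N,\mathcal{P}_{b,C})$ for every $d\ge N+2$. I expect the only delicate point to be the backward direction: it hinges entirely on the \emph{sufficiency} of the conditions in Theorem~\ref{thm:gencvxcomp} --- if they were merely necessary (as the naive discretizations of Subsection~\ref{subsec:ce} are), one would obtain only an upper bound on the worst-case performance rather than equality --- and one must carry the index $*$ and the conventions $x_*=g_*=f_*=0$ through the substitution so that the pairs $(i,*)$ and $(*,j)$ are correctly represented among the constraints.
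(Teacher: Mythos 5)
Your proposal is correct and follows essentially the same route as the paper: the paper's proof also rests on the two-way correspondence between functions in $\mathcal{F}_{\mu,L}(\mathbb{R}^d)$ and feasible pairs $(f,G)$ with $\rank G \le d$ (necessity and sufficiency of Theorem~\ref{thm:gencvxcomp} plus Gram-matrix factorization), followed by the observation that $G \in \mathbb{S}^{N+2}$ makes the rank restriction vacuous once $d \ge N+2$. Your explicit handling of the starred index and of the objective-preserving (rather than strictly bijective) correspondence matches the paper's intent.
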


\begin{proof} 
{We have already shown the two-way correspondence between functions in $\mathcal{F}_{\mu,L}{(\mathbb{R}^{d})}$ and feasible solutions of this problem where matrix $G$ has rank at most $d$. Since matrix $G$ has size $N+2$, it has rank at most $N+2$, which establishes that this semidefinite program is a correct formulation of the performance estimation problem when $d \ge N+2$.\qed}
\end{proof}}

{The optimal value $w^{sdp}_{\mu,L}(R,H,N,b,C)$ of \eqref{PEPS:Pep_sdp} is not necessarily finite or attained at some feasible point. However, when $L$ is finite, any continuous performance criterion $\mathcal{P}$ will force the optimal value to be attained and finite, because the constraints on variables $f$ and $G$ force the domain to be compact.}{
\begin{proposition} Under the assumptions of Theorem~\ref{thm:sdp_pep}, the optimum of~\eqref{PEPS:Pep_sdp} is attained and finite when $L<\infty$.
\label{prop:opt_achieved}
\end{proposition}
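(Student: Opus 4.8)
The plan is to show that when $L<\infty$ the feasible region of~\eqref{PEPS:Pep_sdp} is compact, so that a continuous (indeed linear) objective attains its supremum at a finite value by the Weierstrass theorem. Since the feasible set is clearly closed (it is the intersection of finitely many closed sets: the sublevel sets of the affine trace functionals and the PSD cone), the whole difficulty is to establish boundedness.

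First I would extract from the interpolation constraints \eqref{eq:Cond_Lmu_interp} a bound on the Gram matrix $G$. The key observation is that when $L<\infty$, Theorem~\ref{thm:gencvxcomp} applied with the pair $(i,j)$ and then the pair $(j,i)$, added together, yields (as recorded in Remark~3) the cocoercivity-type inequality
\begin{align*}
 (g_i-g_j)^{\top\!}(x_i-x_j)\geq \frac{1}{1+\mu/L}\left( \frac{1}{L}\norm{g_i-g_j}^2_2+\mu \norm{x_i-x_j}_2^2\right)\geq \frac{1}{L(1+\mu/L)}\norm{g_i-g_j}_2^2.
\end{align*}
Taking $j=*$ (so $g_*=0$, $x_*=0$) gives $g_i^{\top\!} x_i \geq \frac{1}{L+\mu}\norm{g_i}_2^2$, and combining with Cauchy--Schwarz $g_i^{\top\!}x_i\leq \norm{g_i}_2\norm{x_i}_2$ produces $\norm{g_i}_2\leq (L+\mu)\norm{x_i}_2$. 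Now $x_i=Ph_i=x_0-\sum_{k<i}h_{i,k}g_k$, so by induction on $i$ (starting from $\norm{x_0}_2\leq R$, which follows from the constraint $\Tr{GA_R}-R^2\leq 0$, i.e. $\norm{x_0}_2^2\leq R^2$) each $\norm{x_i}_2$ and hence each $\norm{g_i}_2$ is bounded by a constant depending only on $R$, $L$, $\mu$ and the fixed coefficients $h_{i,k}$. This bounds all entries of $G=P^\top P$ (each entry being an inner product of two such vectors, bounded by Cauchy--Schwarz), so $G$ ranges over a bounded set. I expect this induction to be the main obstacle, in the sense that it is the only genuinely non-trivial step; everything else is routine.

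It then remains to bound the function values $f_i$. Writing the interpolation inequality \eqref{eq:Cond_Lmu_interp} for the pairs $(i,*)$ and $(*,i)$ bounds both $f_i-f_*=f_i$ from above and from below by expressions that are continuous in the (now bounded) quantities $x_i$, $g_i$; since the right-hand sides of those inequalities are bounded once $G$ is bounded, each $f_i$ lies in a bounded interval. Hence the feasible set is a closed and bounded subset of $\mathbb{S}^{N+2}\times\mathbb{R}^{N+1}$, therefore compact.

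Finally I would conclude: the objective $\mathcal{P}_{b,C}(f,G)=b^\top f+\Tr{CG}$ is continuous (linear), so by the extreme value theorem it attains a maximum over the nonempty compact feasible region, and that maximum is finite. (Nonemptiness is immediate, e.g. from any function in $\mathcal{F}_{\mu,L}$ run through the method, or one can note that the theorem's hypotheses already presuppose the performance estimation problem is meaningful.) This gives the statement of Proposition~\ref{prop:opt_achieved}. One remark worth making in the write-up is exactly where finiteness of $L$ is used: when $L=+\infty$ the cocoercivity inequality degenerates ($1/L=0$) and gives no control on $\norm{g_i}_2$, so gradients — and with them $G$ and the $f_i$ — can run off to infinity, which is why the restriction $L<\infty$ is needed.
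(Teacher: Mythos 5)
Your proof is correct and follows essentially the same route as the paper's: establish compactness of the feasible region by inductively bounding each iterate and gradient via the fixed-step structure starting from $\norm{x_0}_2\leq R$, then bound the function values $f_i$ from the constraints with $j=*$, and conclude by continuity of the (linear) objective. The only difference is cosmetic: the paper gets $\norm{g_i}_2\leq L\norm{x_i}_2$ directly from the discretized Lipschitz condition \eqref{eq:xce1} with $j=*$ (valid for any interpolating function), whereas you derive the slightly weaker bound $\norm{g_i}_2\leq (L+\mu)\norm{x_i}_2$ from the summed interpolation constraints plus Cauchy--Schwarz, which has the minor advantage of working directly with the constraints of \eqref{PEPS:Pep_sdp}.
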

\begin{proof}
To show that the solution of \eqref{PEPS:Pep_sdp} is attained and finite, it suffices to prove that its feasible region is compact (since the objective is continuous). We first prove that the iterates of method $H$ applied to any function in  $\mathcal{F}_{\mu,L}{(\mathbb{R}^d)}$ with $L<\infty$ are bounded, as well as their gradients.

Note that  the Lipschitz condition on the gradients \eqref{eq:xce1} with $j=*$ shows that if iterate $x_i$ is bounded, gradient $g_i$ is also bounded. We proceed by recurrence. We start with the fact that $x_0$ is bounded, using the assumption that $x_* = 0$ and constraint $\norm{x_0 - x_*}_2 \leq R$. This implies that $g_0$ is bounded, hence that $x_1$ is bounded using Definition~\ref{def:GenFirstOrder} of a fixed-step method. This implies in turn that $g_1$ is bounded, then that $x_2$ is bounded, and so on until we have shown that all iterates and gradients are bounded.

Condition \eqref{eq:ce2} with $j=*$ then implies that function values $f_i$ are bounded. Therefore all entries in variables $f$ and $G$ are bounded which, combined with closedness of the feasible region, establishes the claim. \qed
\end{proof}}
\begin{remark} When $L=+\infty$ {(recall the conventions $1/{+\infty}=0$ and $+\infty-\mu=+\infty$ used in this paper)}, the {feasible region may be} unbounded and it is possible to design feasible functions which drive standard performance criteria arbitrarily away from $0$. Nevertheless, performance estimation on such nonsmooth functions could still be tackled after introduction of another appropriate Lipschitz condition on the {class of} functions, such as $||g_i||_2\leq L${. We leave this as a topic} for further research and, in the rest of this text, restrict ourselves to the smooth case $L<+\infty$. \end{remark}

{Our formulation \eqref{PEPS:Pep_sdp} is dimension-independent, and computes the exact worst-case performance of a first-order method with $N$ steps as long as the class of functions of interest contains functions of dimension at least $N+2$. This corresponds to the so-called large-scale optimization setting, which is usually assumed when analyzing the worst-case of first-order methods.  Function classes with smaller dimensions can also be handled with the addition of a (non-convex) rank constraint on $G$. We obtain the following result, whose proof is straightforward.

\begin{proposition} Consider the setting of Theorem~\ref{thm:sdp_pep}. If $d < N+2$, the worst-case performance after $N$ iterations of method $H$ applied to some function in  $\mathcal{F}_{\mu,L}{(\mathbb{R}^{d})}$ is equal to the optimal value of the semidefinite program~\ref{PEPS:Pep_sdp} under the additional constraint $\rank G \le d$.
\end{proposition}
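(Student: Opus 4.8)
The statement is a mild refinement of Theorem~\ref{thm:sdp_pep}: there we needed $d\ge N+2$ so that an arbitrary symmetric positive semidefinite $G\in\mathbb{S}^{N+2}$ (hence of rank at most $N+2$) could be realized by vectors in $\mathbb{R}^d$; here we keep $d<N+2$ and simply impose $\rank G\le d$ by hand so that the correspondence stays exact. I would therefore not reprove the whole machinery, but only track how the rank of $G$ enters the two-way correspondence already established in the text preceding Theorem~\ref{thm:sdp_pep}.

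First I would recall the forward direction: given any $f\in\mathcal{F}_{\mu,L}(\mathbb{R}^d)$, any starting point $x_0\in\mathbb{R}^d$ with $\norm{x_0-x_*}_2\le R$, and the iterates $x_1,\ldots,x_N$ produced by the fixed-step method $H$, set $P=[g_0\ g_1\ \cdots\ g_N\ x_0]\in\mathbb{R}^{d\times(N+2)}$ and $G=P^{\top\!}P$. Then $G\succeq 0$, and since $P$ has $d$ rows we have $\rank G=\rank P\le d$; the interpolation inequalities of Theorem~\ref{thm:gencvxcomp}, rewritten in the trace form $f_j-f_i+\Tr{GA_{ij}}\le0$, and the distance constraint $\Tr{GA_R}-R^2\le0$, are all satisfied, and $\mathcal{P}_{b,C}(f,G)=b^\top f+\Tr{CG}$ equals the true performance. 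Hence every $d$-dimensional instance gives a feasible point of \eqref{PEPS:Pep_sdp} with the extra constraint $\rank G\le d$, of equal objective value — so the worst-case performance is at most the optimal value of the rank-constrained program.

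For the reverse direction I would take any feasible $(f,G)$ of \eqref{PEPS:Pep_sdp} with $\rank G\le d$. Because $G\in\mathbb{S}^{N+2}$ is positive semidefinite of rank at most $d$, a Cholesky-type factorization yields $G=P^{\top\!}P$ with $P\in\mathbb{R}^{d\times(N+2)}$; read off $g_0,\ldots,g_N,x_0\in\mathbb{R}^d$ from the columns of $P$, and define the remaining iterates by $x_i=x_0-\sum_{k=0}^{i-1}h_{i,k}g_k$ (equivalently $x_i=Ph_i$), with $x_*=g_*=f_*=0$. The constraints $f_j-f_i+\Tr{GA_{ij}}\le0$ are exactly the conditions \eqref{eq:Cond_Lmu_interp}, so Theorem~\ref{thm:gencvxcomp} furnishes an interpolating $f\in\mathcal{F}_{\mu,L}(\mathbb{R}^d)$ with $f(x_i)=f_i$, $g_i\in\partial f(x_i)$; the constraint $\Tr{GA_R}-R^2\le0$ gives $\norm{x_0-x_*}_2\le R$, and the iterates are by construction those of method $H$ on $f$. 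Thus $(f,x_0)$ is feasible for \eqref{Intro:dPEP2} with the same objective value, so the rank-constrained optimum is at most $w^{(d)}_{\mu,L}$. Combining the two inequalities gives equality.

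The argument is essentially bookkeeping, so there is no real obstacle; the only point requiring a word of care is that the rank of the Gram matrix is precisely what controls the ambient dimension in the Cholesky step (a rank-$r$ PSD matrix factors through $\mathbb{R}^r\subseteq\mathbb{R}^d$ whenever $r\le d$, and conversely $G=P^{\top\!}P$ with $d$ rows forces $\rank G\le d$), which is exactly why adding $\rank G\le d$ restores exactness in the regime $d<N+2$ while leaving the rest of the derivation of Theorem~\ref{thm:sdp_pep} untouched.
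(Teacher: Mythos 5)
Your proof is correct and follows essentially the same route as the paper: the paper treats this proposition as an immediate consequence of the two-way correspondence (already established before Theorem~\ref{thm:sdp_pep}) between $d$-dimensional instances and feasible pairs $(f,G)$ with $\rank G \le d$, which is exactly the bookkeeping you carry out in both directions via the Gram factorization $G=P^{\top\!}P$.
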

Note that this also establishes that the sequence $\{w^{(d)}_{\mu,L}(R, \mathcal{M}, N, \mathcal{P}_{b,C})\}_{d=1,2,\hdots}$ is monotonically increasing, and that it converges for a finite value of $d$.
\begin{corollary}
The worst-case performance after $N$ steps of a fixed-step method on a $L$-smooth ($\mu$-strongly) convex function is achieved by an $N+2$-dimensional function. \label{corr:nplus2}
\end{corollary}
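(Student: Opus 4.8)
The plan is to combine the two preceding results — Theorem~\ref{thm:sdp_pep} and the rank-constrained proposition just above — with the attainment statement of Proposition~\ref{prop:opt_achieved}, exploiting that the Gram matrix $G$ lives in $\mathbb{S}^{N+2}$ and hence automatically satisfies $\rank G \le N+2$.

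First I would argue that the sequence $\{w^{(d)}_{\mu,L}\}_{d=1,2,\dots}$ stabilizes at $d = N+2$. By the rank-constrained proposition, for $d \le N+1$ the value $w^{(d)}_{\mu,L}$ equals the optimum of~\eqref{PEPS:Pep_sdp} with the extra constraint $\rank G \le d$; since adding a constraint can only shrink the feasible set, the sequence is nondecreasing. For $d \ge N+2$ the constraint $\rank G \le d$ is vacuous, as no matrix in $\mathbb{S}^{N+2}$ has rank exceeding $N+2$, so Theorem~\ref{thm:sdp_pep} gives $w^{(d)}_{\mu,L} = w^{sdp}_{\mu,L}$ for every such $d$. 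Hence $\sup_d w^{(d)}_{\mu,L} = w^{(N+2)}_{\mu,L} = w^{sdp}_{\mu,L}$, and no function of dimension larger than $N+2$ can strictly improve on the worst case.

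It then remains to produce a concrete $(N+2)$-dimensional function attaining this value. Since we work in the smooth regime $L < \infty$, Proposition~\ref{prop:opt_achieved} ensures the supremum in~\eqref{PEPS:Pep_sdp} is attained at some feasible pair $(f^\star, G^\star)$ with $G^\star \in \mathbb{S}^{N+2}$, hence $\rank G^\star \le N+2$. Invoking the two-way correspondence from the proof of Theorem~\ref{thm:sdp_pep}, a factorization $G^\star = P^{\top\!}P$ recovers a starting point $x_0 \in \mathbb{R}^{N+2}$ and gradients $g_0,\dots,g_N \in \mathbb{R}^{N+2}$; the remaining iterates are then determined by Definition~\ref{def:GenFirstOrder}, and the interpolation conditions of Theorem~\ref{thm:gencvxcomp} (together with the explicit reconstruction described after it) yield a genuine $f \in \mathcal{F}_{\mu,L}(\mathbb{R}^{N+2})$ matching all the data triples. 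Run from $x_0$, this $f$ realizes performance $w^{sdp}_{\mu,L} = \sup_d w^{(d)}_{\mu,L}$, which is the claim. I do not expect a real obstacle here: the only delicate point is that ``achieved'' requires attainment of the SDP optimum — precisely why the statement is read with $L < \infty$, since for $L = +\infty$ the feasible region may be unbounded and the supremum need not be a maximum, as the preceding remark notes — and everything else is bookkeeping over already-established facts.
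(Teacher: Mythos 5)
Your proposal is correct and follows essentially the same route the paper takes implicitly: since the Gram matrix $G$ has size $(N+2)\times(N+2)$ its rank never exceeds $N+2$, so the dimension-indexed worst-case values stabilize at $d=N+2$, and the attained SDP optimum (Proposition~\ref{prop:opt_achieved}, $L<\infty$) is converted back into an interpolated function in $\mathcal{F}_{\mu,L}(\mathbb{R}^{N+2})$ via the factorization $G=P^{\top\!}P$ and Theorem~\ref{thm:gencvxcomp}. Your explicit invocation of attainment and of the reconstruction step is exactly the bookkeeping the paper leaves to the reader.
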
}

{Finally, note that, when applied to the gradient method in the non{-}strongly {convex} case  ($\mu=0$),  problem \eqref{PEPS:Pep_sdp} is equivalent to one of the formulations proposed in~\cite{Article:Drori}, more specifically to their problem (G). Theorem~\ref{thm:sdp_pep}  establishes that this relaxation is in fact exact.}

{\subsection{A dual semidefinite program to generate upper bounds}}
In general, it is not easy to find an analytical optimal solution to~\eqref{PEPS:Pep_sdp}. Hence, we are also interested in a generic and easier way of obtaining analytical upper bounds on the performance of a given algorithm. A classical way of doing so is to work with the Lagrangian dual of~\eqref{PEPS:Pep_sdp}:
\begin{align}
\label{dualPep} \tag{d-sdp-PEP} 
\inf_{\lambda_{ij}, \tau}  \ \tau R^2\ \text{{ such that }  } \tau A_R - C + \sum_{i,j\in I} \lambda_{ij} A_{ij}  &\succeq 0,\notag\\
  b-\sum_{i,j\in I} \lambda_{ij} (u_{j}-u_{i}) &= 0,\notag\\
 \lambda_{ij}&\geq 0, \quad \quad i,j\in I,\notag\\
 \tau &\geq 0, \notag
\end{align}
whose feasible solutions will provide theoretical upper bounds on the worst-case behavior of every fixed-step first-order method (using weak duality). Note that the final dual formulation used in~\cite{Article:Drori}, which deals with the case $\mu=0$, can be recovered by taking $\lambda_{ij}=0$ for $i+1\neq j$ or $i\neq *$ in our dual, {i.e.,}\@ it is a restriction of \eqref{dualPep} with a potentially larger optimal value.

The next {theorem} guarantees that no duality gap occurs between \eqref{PEPS:Pep_sdp} and \eqref{dualPep} under the technical assumption that $h_{i,i-1}\neq 0$ ($i\in\left\{1,\hdots,N\right\}$). This assumption is reasonable as it only implies {that, at each iteration, the most recent gradient obtained from the oracle has to be used in the computation of the next iterate}. The {theorem} will also guarantee the existence of a dual feasible point attaining the optimal value of the primal-dual pair of estimation problems \eqref{PEPS:Pep_sdp} and \eqref{dualPep}, {i.e.,} a tight upper bound on the worst-case performance of the considered method. 

\begin{theorem} \label{prop:main} The optimal value of the dual problem~(\ref{dualPep}) with $0\leq \mu < L < \infty$ is attained and equal to $w^{(sdp)}_{\mu,L}(R,H,N,b,c)$ under the assumption{s} that $h_{i,i-1}\neq 0$ for all $i\in\left\{1,\hdots,N\right\}$.
\end{theorem}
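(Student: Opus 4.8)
The plan is to derive the statement from the strong duality (conic Slater) theorem for semidefinite programs: if \eqref{PEPS:Pep_sdp} is bounded above and admits a strictly feasible point, then there is no duality gap and the dual \eqref{dualPep} attains its optimum. Boundedness and feasibility are already in hand, since $L<\infty$: Proposition~\ref{prop:opt_achieved} guarantees that $w^{sdp}_{\mu,L}(R,H,N,b,C)$ is finite (and attained), and any function of $\mathcal{F}_{\mu,L}$ produces a feasible point. So the whole proof reduces to exhibiting a pair $(f,G)$ that is \emph{strictly} feasible for \eqref{PEPS:Pep_sdp}, i.e. with $G\succ 0$, $\Tr{G A_R}<R^2$, and $f_j-f_i+\Tr{G A_{ij}}<0$ for all $i,j\in I$.

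To construct such a point I would start from a quadratic $q_c(x)=\frac{c}{2}\norm{x}_2^2$ with $c\in(\mu,L)$, take $x_0$ with $\norm{x_0}_2=R/2$ (assuming $R>0$, the case $R=0$ being degenerate), and form the data triples $(x_i,g_i,f_i)$ generated by $H$ together with $(x_*,g_*,f_*)=(0,0,0)$. A direct computation using $g_i=c\,x_i$ and $g_*=x_*=0$ shows that for $q_c$ both sides of the interpolation inequality~\eqref{eq:Cond_Lmu_interp} between indices $i$ and $j$ are proportional to $\norm{x_i-x_j}_2^2$, their difference being $\tfrac{(L-c)(c-\mu)}{2(L-\mu)}\norm{x_i-x_j}_2^2$, which is strictly positive whenever $x_i\neq x_j$. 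Hence every interpolation inequality holds strictly as soon as the iterates $x_0,\dots,x_N,x_*$ are pairwise distinct. Now for $q_c$ one has $x_i=s_i(c)\,x_0$ with $s_0=1$ and $s_i=1-c\sum_{k<i}h_{i,k}s_k$; the coefficient of $c^i$ in $s_i$ equals $(-1)^i h_{1,0}h_{2,1}\cdots h_{i,i-1}$, which is nonzero precisely because $h_{i,i-1}\neq 0$ for every $i$. Thus $\deg s_i=i$, the polynomials $s_i-s_j$ (for $i>j$) and $s_i$ are not identically zero, and choosing $c\in(\mu,L)$ outside their finitely many roots makes $x_0,\dots,x_N$ pairwise distinct and different from $x_*=0$. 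With such a $c$, the pair $(f^0,G^0)$ — where $G^0$ is the Gram matrix of $[\,g_0\ \cdots\ g_N\ x_0\,]$ — is feasible for \eqref{PEPS:Pep_sdp}, satisfies all the scalar inequalities strictly, and has $\Tr{G^0 A_R}=R^2/4<R^2$.

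The only constraint still active is $G\succeq 0$, because $G^0$ is rank one (all iterates and gradients of $q_c$ are collinear with $x_0$). I would fix this by replacing $G^0$ by $G^0+\delta I$ with $\delta>0$ small: the matrix becomes positive definite, while each of the finitely many scalar constraints, being affine in $G$ and strict at $G^0$, remains strict at $G^0+\delta I$ for $\delta$ small enough (in particular $\Tr{(G^0+\delta I)A_R}=R^2/4+\delta(N+2)<R^2$). The pair $(f^0,G^0+\delta I)$ is then a Slater point of \eqref{PEPS:Pep_sdp}. Combined with the boundedness provided by Proposition~\ref{prop:opt_achieved}, the conic duality theorem (see e.g.~\cite{ben2001lectures}) yields zero duality gap between \eqref{PEPS:Pep_sdp} and \eqref{dualPep} and attainment of the dual, which is exactly the claim.

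The main obstacle is the construction in the second paragraph: producing a strictly feasible primal point, i.e. guaranteeing strict interpolation inequalities while ending up with a positive definite Gram matrix. Strictness of the interpolation inequalities for $q_c$ rests on the distinctness $x_i\neq x_j$, and this is exactly where the hypothesis $h_{i,i-1}\neq 0$ is used — it forces $\deg s_i=i$, so the polynomials $s_i-s_j$ cannot vanish identically and a suitable $c$ exists. Once the Slater point is available, the $\delta I$-perturbation and the appeal to conic duality are routine.
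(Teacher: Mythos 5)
Your proof is correct, and it follows the same high-level strategy as the paper (establish Slater's condition for the primal \eqref{PEPS:Pep_sdp}, then invoke strong duality to get zero gap and dual attainment, with finiteness supplied by Proposition~\ref{prop:opt_achieved}), but the construction of the strictly feasible point is genuinely different. The paper builds a single quadratic $\tfrac12 x^\top Q' x$ from a tridiagonal matrix, scaled to have spectrum in $[\mu,L]$, and shows that the matrix $P=[x_0\ g_0\ \cdots\ g_N]$ generated by the method is upper triangular with nonzero diagonal (this is where $h_{i,i-1}\neq 0$ enters), so that $G=P^\top P\succ 0$ directly; strictness of the scalar constraints is never needed because they are affine and the refined Slater condition only requires interiority of the semidefinite constraint. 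You instead take the isotropic quadratic $\tfrac{c}{2}\norm{x}_2^2$ with $\mu<c<L$, prove the interpolation inequalities hold strictly (your identity $\mathrm{LHS}-\mathrm{RHS}=\tfrac{(L-c)(c-\mu)}{2(L-\mu)}\norm{x_i-x_j}_2^2$ checks out) once the iterates are pairwise distinct — which is where you use $h_{i,i-1}\neq 0$, via the degree argument on the polynomials $s_i(c)$ — and then inflate the rank-one Gram matrix to $G^0+\delta I$. This is legitimate: a Slater point of the SDP need not arise from an actual run of the method, only from the variables $(f,G)$. Your route buys a more elementary, dimension-one construction and an explicit quantitative margin in all constraints; the paper's route buys a strictly feasible point that is itself a genuine instance (a function in the class with full-rank data), with no perturbation step. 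Two cosmetic points: the constraints with $i=j$ read $0\le 0$ and can never be strict, so you should either discard them as vacuous or, as the paper does, rely on the refined Slater condition in which affine scalar constraints need not be strict; and $\Tr{(G^0+\delta I)A_R}=R^2/4+\delta$ rather than $R^2/4+\delta(N+2)$, which only strengthens your conclusion.
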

\begin{proof} We use the classical Slater condition~\cite{Book:Boyd} on the primal problem in order to guarantee a zero duality gap --- that is, we show that~\eqref{PEPS:Pep_sdp} has a feasible point with $G\succ 0$. The reasoning is divided in two parts; we consider first the case $\mu=0$ and $L=2+2\cos(\pi/(N+2))$, and we generalize it to general $\mu<L$ afterwards. Consider the quadratic function $f(x)=\frac{1}{2}x^{\!\top}  Qx$ with the following tridiagonal positive definite matrix $Q$ 
\[ Q=
\begin{pmatrix}
2 \ \ & 1 \ & 0 \ & 0 & \hdots & \ 0\\[0.12cm]
1 \ \ & 2 \ & 1 \ & 0 & \hdots & \ 0\\
0 \ \ & 1 \ & 2 \ & 1 & \ddots & \vdots\\
\vdots &  \ & \ddots & \ddots & \ddots & 
\end{pmatrix} \succ 0. \]
We show how to construct a full-rank $G$ feasible for~\eqref{PEPS:Pep_sdp} using the values of the quadratic function $f$. In order to do so, we exhibit a full-rank matrix 
$$P=[ x_0 \ g_0 \ g_1 \ \hdots g_N]$$
corresponding to the application of a given method (with $h_{i,i-1}\neq 0$) to the quadratic function $f$. Indeed, choosing $x_0=Re_1$, we can show that $P$ is upper triangular with non-zero diagonal entries. {Then we have
\begin{align*}
&g_0=Qx_0=2e_1+e_2,\\
&x_1=x_0-h_{1,0}g_0,\\
&g_1=Qx_1=g_0-h_{1,0}Qg_0=2e_1+e_2-h_{1,0}(4e_1+4e_2+e_3).\\
\end{align*}}Hence $g_1$ has a non-zero element associated with $e_3$ whereas the only non-zero elements of $g_0$ are associated with $e_1$ and $e_2$. Now, we assume that $g_{i-1}$ has a non-zero element corresponding to $e_{i+1}$ and zero elements corresponding to $e_{k}$ for all $k> i+1$, while all previous gradients have zero components corresponding to $e_{k}$ for all $k> i$. {Then we have}
$$g_i^{\top\!}e_{i+2}=x_i^{\top\!}Qe_{i+2}=x_i^{\top\!}(e_{i+1}+2e_{i+2}+e_{i+3}),$$
with $x_i^{\top\!}e_{i+2}=x_i^{\top\!}e_{i+3}=0$ and $x_i^{\top\!}e_{i+1}\neq0$ because of the recurrence assumption and the iterative form of the algorithm:{
\begin{align*}
& x_i^{\top\!}e_{i+1} = \underbrace{x_0^{\top\!}e_{i+1}}_{=0}- \sum^{i-2}_{k=0} h_{i,k}\underbrace{g_{k}^{\top\!}e_{i+1}}_{=0} - h_{i,i-1}\underbrace{g_{i-1}^{\top\!}e_{i+1}}_{\neq 0},\\
& x_i^{\top\!}e_{i+2} = \underbrace{x_0^{\top\!}e_{i+2}}_{=0}- \sum^{i-2}_{k=0} h_{i,k}\underbrace{g_{k}^{\top\!}e_{i+2}}_{=0} - h_{i,i-1}\underbrace{g_{i-1}^{\top\!}e_{i+2}}_{= 0},\\
& x_i^{\top\!}e_{i+3} = \underbrace{x_0^{\top\!}e_{i+3}}_{=0}- \sum^{i-2}_{k=0} h_{i,k}\underbrace{g_{k}^{\top\!}e_{i+3}}_{=0} - h_{i,i-1}\underbrace{g_{i-1}^{\top\!}e_{i+3}}_{= 0}.
\end{align*}}Hence, $g_i$ has a non-zero element associated with $e_{i+2}$. We {deduce} that the following components are {equal to} zero by computing $g_i^{\top\!}e_{i+2+k}$ for $k>0$:
$$g_i^{\top\!}e_{i+2+k}=x_i^{\top\!}Qe_{i+2+k}=x_i^{\top\!}(e_{i+1+k}+2e_{i+2+k}+e_{i+3+k}),$$
which is zero because of the algorithmic structure of $x_i${, i.e.,
\begin{align*}
& x_i^{\top\!}e_{i+1+k} = \underbrace{x_0^{\top\!}e_{i+1+k}}_{=0}- \sum^{i-2}_{k=0} h_{i,k}\underbrace{g_{k}^{\top\!}e_{i+1+k}}_{=0} - h_{i,i-1}\underbrace{g_{i-1}^{\top\!}e_{i+1+k}}_{= 0}.
\end{align*}}Hence{,} $P$ is an upper triangular matrix with positive entries on the diagonal, and is therefore full-rank. In order to make this statement hold for general $\mu<L$, observe that the structure of the matrix is preserved using the operation {($I_{N+2}$ is the identity matrix)}
$$ Q'=\left(Q-\lambda_{\min}(Q)I_{N+2}\right) \frac{(L-\mu)}{\lambda_{\max}(Q)-\lambda_{\min}(Q)} + \mu I_{N+2}.$$
The corresponding quadratic function is {easily seen to be} {$L$-smooth and} $\mu$-strongly convex.
Therefore, the interior of the domain of~\eqref{PEPS:Pep_sdp} is non-empty and Slater's condition
applies for  $\mu<L$, ensuring that no duality gap occurs and that the dual optimal value is attained.\qed
\end{proof}
{ One can note that Theorem~\ref{prop:main} guarantees the existence of a {fully explicit} proof (i.e., a combination of valid inequalities, or equivalently, a dual feasible solution) for any worst-case function (see the example at the end of this section).}
\subsection{Homogeneity of the optimal values with respect to $L$ and $R$}
\label{subsec:homegeneity}
{W}e observe that, for most performance criteria, one can predict {how the worst-case performance depends} from parameters $L$ and $R$, {provided the fixed step sizes contained in $H$ are scaled appropriately (i.e., inversely proportional to $L$). In the rest of this paper we will only consider such scaled (normalized) step sizes. Therefore, the corresponding performance estimation problems have only to be solved numerically in the case $R=1$ and $L=1$, from which a general bound valid for any $L$ and $R$ can be deduced.

More specifically, a classical reasoning involving appropriate scaling operations easily leads to the following 
 homogeneity relations} for the standard criteria $f(x_N)-f_*$, $\norm{\nabla f(x_N)}_2$ and $\norm{x_N-x_*}_2$:
\begin{align*}
&w^{sdp}_{\mu,L}(R,{H/L},N,f(x_N)-f_*)=LR^2 \ w^{sdp}_{\kappa,1}(1,H,N,f(x_N)-f_*),\\
&w^{sdp}_{\mu,L}(R,{H/L},N,\norm{\nabla f(x_N)}^2_2)={\bf \color{red} L^2R^2}\ w^{sdp}_{\kappa,1}(1,H,N,\norm{\nabla f(x_N)}^2_2),\\
&w^{sdp}_{\mu,L}(R,{H/L},N,\norm{x_N-x_*}^2_2)={\bf \color{red} R^2}\ w^{sdp}_{\kappa,1}(1,H,N,\norm{x_N-x_*}^2_2),
\end{align*}
where $\kappa=\mu/L$ is the inverse condition number {and $H/L$ {describes} the fixed-step method obtained by dividing all step sizes $h_{i,j}$ by the Lipschitz constant $L$.} {Results in the rest of this paper implicitly rely on these relations.}

{\subsection{A simple example}
Consider the very simple case of a method performing a single gradient step  using the non-standard step-size $\frac{3}{2L}$, i.e., $x_1 = x_0 - \frac{3}{2L} \nabla f(x_0)$ (this is actually conjectured to be the best possible step size for a single step, see Section~\ref{sss:Drori}). One wishes to estimate the worst-case objective function accuracy after taking that step, i.e., maximize $f(x_1) - f_*$, over all $L$-smooth convex functions. Solving the corresponding semidefinite formulation \eqref{PEPS:Pep_sdp} with $\mu=0$, $N=1$, $H=\begin{pmatrix} \frac32 \end{pmatrix}$ and $\mathcal{P}_{b,C}(f, G) = f_1$ provides the  optimal value \[ w^{sdp}_{0,L}\left(R, \begin{pmatrix} \frac32 \end{pmatrix}, 1,\begin{pmatrix} 0 \\ 1 \end{pmatrix} , 0^{3 \times 3} \right)=\frac{LR^2}{8}, \] attained by the following optimal solution \[ f_0 = \frac{LR^2}{2} , f_1 = \frac{LR^2}{8} \text{ and } G = L R^2 \begin{pmatrix} L  & - L /2 & 1 \\ - L /2 & L /4  & - 1/2 \\ 1 & -1/2 & 1/L  \end{pmatrix} \succeq 0. \]
This means that $f(x_1) - f_* \le \frac{LR^2}{8}$ holds for any $f \in \mathcal{F}_{0,L}(\mathbb{R}^d)$ for any $d$ and provided that $\| x_0 - x_* \| \le R$.  It is easy to check that function $f(x)= \frac{L}{2} x^2 \in \mathcal{F}_{0,L}(\mathbb{R})$ achieves this worst-case when started from $x_0 = R$. Indeed one can successively evaluate $f_0=f(x_0)=\frac{LR^2}{2}$, $g_0 = \nabla f(x_0) = LR$, $x_1 = R - \frac32 R = -\frac{R}{2}$, $f_1=f(x_1)=\frac{LR^2}{8}$ and $g_1 = -\frac{LR}{2}$. This function is one-dimensional since the optimal $G$ has rank one (note that Corollary~\ref{corr:nplus2} only guaranteed the existence of a three-dimensional worst-case).

Solving the dual problem \eqref{dualPep} leads to the same optimal value $\frac{LR^2}{8}$, attained by  optimal multipliers $\lambda_{01}=\lambda_{*0}=\lambda_{*1}=\frac12 \geq 0 $ and $\tau = \frac{L}{8}$. The corresponding dual slack matrix is \[ S= \frac12  \begin{pmatrix} 
1/L &  1/L & -1/2\\
1/L & 1/L & -1/2\\
-1/2 & -1/2 & L/4 \end{pmatrix} = \frac{L}{2} \begin{pmatrix} -1/L \\ -1/L \\ 1/2\end{pmatrix}  \begin{pmatrix} -1/L & -1/L & 1/2\end{pmatrix}  \succeq 0. \]
From this dual solution, a fully explicit proof of the worst-case performance can be derived, which can be checked independently without any knowledge about our approach. Indeed, linear equalities in the dual imply that the  objective accuracy $f(x_1)-f_*$ can be written exactly as follows
\begin{eqnarray*} f(x_1) - f(x_*) &=& \frac12 \left(f(x_1)-f(x_0)+\nabla f(x_1)^\top(x_0-x_1)+\frac{1}{2L}\norm{\nabla f(x_0)-\nabla f(x_1)}_2^2\right)  \\ & &
+ \frac12  \left(f(x_0)- f(x_*) + \nabla f(x_0)^\top (x_* -x_0) +\frac{1}{2L} \norm{\nabla f(x_0)-\nabla f(x_*)}_2^2 \right) 
\\ & & + \frac12  \left(f(x_1) - f(x_*)  + \nabla f(x_1)^\top (x_* - x_1) +\frac{1}{2L} \norm{\nabla f(x_1)-\nabla f(x_*)}_2^2\right)  \\
& & + \frac{L}{8}  \| x_0 - x_* \|^2  - \frac{L}{2} \left\| \frac{1}{2} (x_0-x_*)  - \frac{\nabla f(x_0)}{L} - \frac{\nabla f(x_1)}{L}  \right\|^2 
\end{eqnarray*}
(where for the last term we write the quadratic form $\Tr{S G}$ as a square, since $S$ is rank-one). This equality, which is straightforward to check using $x_1 = x_0 - \frac{3}{2L} \nabla f(x_0)$ and $\nabla f(x_*)=0$, immediately implies inequality $f(x_1) - f_* \le \frac{L }{8}\| x_0 - x_* \|^2$, since the first three bracketed expressions are nonpositive because of inequalites from Corollary~\ref{corr:Linterp} valid for all functions in $\mathcal{F}_{0,L}$.}

\section{Numerical performance estimation of standard first-order algorithms}
\label{sec:numerics}

In this section we apply the convex PEP formulation to study convergence of the fixed-step gradient method (GM), the standard fast gradient method (FGM) and the optimized gradient method (OGM) proposed by~\cite{kim2014optimized}. We begin {with} the GM for smooth convex optimization, whose worst-case is conjectured {in}~\cite{Article:Drori} to be attained on a simple one-dimensional function. Numerical experiments with our exact formulation confirm this conjecture. Further experiments on the worst-case complexity for different methods, problem classes and performance criteria lead to a series of conjectures based on worst-case functions possessing a similar shape. We conclude this section with the study of a nonlinear performance criteria corresponding to the smallest gradient norm among all iterates computed by the method.

All numerical results in this section were obtained on an Intel $3.5$Ghz desktop computer using a combination of the YALMIP modeling environment in MATLAB~\cite{Article:Yalmip}, the MOSEK~\cite{Article:Mosek} and SeDuMi ~\cite{Article:Sedumi} semidefinite solvers and the VSDP (verified semidefinite programming) toolbox~\cite{Techreport:vsdp}.

\subsection{Gradient method}
\subsubsection{A conjecture on smooth convex functions by Drori and Teboulle~\cite{Article:Drori}}\label{sss:Drori}
Consider the classical fixed-step gradient method (GM) with constant  step sizes applied to a smooth convex function in $\mathcal{F}_{0,L}{(\mathbb{R}^d)}$. Following the discussion in section \ref{subsec:homegeneity} we use normalized step sizes $\frac{h}{L}$, inversely proportional to $L$.

\begin{center}
\fbox{
  \parbox{0.65\textwidth}{
  \textbf{Gradient Method (GM)}
  \begin{itemize}
  \item[] Input: $f\in\mathcal{F}_{0,L}{(\mathbb{R}^d)}$, $x_0\in\mathbb{R}^d$, $y_0=x_0$.\\[-0.2cm]
  \item[] For $i=0:N-1$\\[-0.5cm]
      \begin{align*}
    x_{i+1}&=x_i-\frac{h}{L}\nabla f(x_i)
    \end{align*}
  \end{itemize}
  }
}
\end{center}
The following conjecture on the convergence of the worst-case objective function values was made in~\cite{Article:Drori}. 
\begin{conjecture}[\cite{Article:Drori}, Conjecture 3.1.] {Any sequence of iterates $\left\{x_i\right\}$ generated by the gradient method GM with constant normalized step size $0\leq h \leq 2$ on a smooth convex function} $f\in\mathcal{F}_{0,L}{(\mathbb{R}^d)}$ satisfies
$$f(x_N)-f_*\leq \frac{LR^2}{2}\max\left(\frac{1}{2Nh+1},(1-h)^{2N} \right).$$
\label{conj:GM_f_TD}
\end{conjecture}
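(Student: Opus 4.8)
Since the conjecture asserts only an upper bound, the natural route is to combine the exact semidefinite characterisation of Theorem~\ref{thm:sdp_pep} with weak duality: it suffices to exhibit, for every number of steps $N$ and every normalised step size $h\in[0,2]$, a feasible point of the dual problem~\eqref{dualPep} --- instantiated with $\mu=0$, $C=0$ and $b$ selecting the component $f_N$, so that the performance criterion is $f(x_N)-f_*$ --- whose value $\tau R^2$ equals $\frac{LR^2}{2}\max\big(\frac{1}{2Nh+1},(1-h)^{2N}\big)$. By the homogeneity relations of Subsection~\ref{subsec:homegeneity} we may set $L=R=1$, so that $H$ is the $N\times N$ lower-triangular matrix with every subdiagonal entry equal to $h$, and the problem becomes: find nonnegative multipliers $\lambda_{ij}$ meeting the linear feasibility constraint of~\eqref{dualPep}, together with $\tau=\frac12\max\big(\frac{1}{2Nh+1},(1-h)^{2N}\big)$, such that the slack matrix $\tau A_R+\sum_{i,j\in I}\lambda_{ij}A_{ij}$ is positive semidefinite.

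The next step is to guess the multipliers by extrapolating from small cases. The fully worked example at the end of Section~\ref{sec:Peps} treats $N=1$, $h=\frac32$ (which, incidentally, is exactly the crossover value $h_1^\star$ at which the two branches of the $\max$ coincide): there the only active inequalities are the chain step $0\to 1$ and the two optimality comparisons $*\to 0$ and $*\to 1$, each with weight $\frac12$, and the slack matrix turns out to be \emph{rank one}. I would therefore look, for general $N$, for a dual solution supported on $\lambda_{i,i+1}=:a_i$ for $0\le i\le N-1$ and $\lambda_{*,i}=:c_i$ for $0\le i\le N$; the linear feasibility constraint pins these down to a one-parameter family, and imposing that the slack matrix have minimal rank (ideally $S=ss^{\!\top}$) should determine $a_i$, $c_i$ and $\tau$ in closed form. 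This is precisely the telescoping/potential-function structure underlying classical convergence proofs for the gradient method. I expect the regime $\frac{1}{2Nh+1}\ge(1-h)^{2N}$ to reproduce a Drori--Teboulle-type certificate, while in the complementary regime the dual weight should concentrate on the terms involving $x_0$ and $x_N$, reflecting that the one-dimensional quadratic becomes the worst case there.

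The verification then splits into the upper and lower bound. For the upper bound one must prove the slack matrix is positive semidefinite; the hope, borne out for $N=1$, is that it is a sum of very few rank-one terms, so that positive semidefiniteness reduces to an algebraic identity in $h$ and $N$ that can be checked head-on --- carrying this out for general $N$, along with solving the recursion for $a_i,c_i$, is the bulk of the computation. For tightness one produces the matching primal. On the branch $(1-h)^{2N}$ the one-dimensional function $f(x)=\frac L2 x^2$ started at $x_0=R$ has $\nabla f(x)=Lx$, hence $x_i=(1-h)^i x_0$ and $f(x_N)-f_*=\frac L2(1-h)^{2N}R^2$, which already meets the bound; on the branch $\frac{1}{2Nh+1}$ one builds an $(N{+}2)$-dimensional piecewise-quadratic worst-case function from the constructive interpolation of Theorem~\ref{thm:gencvxcomp} (as in~\cite{Article:Drori}) and checks via Corollary~\ref{corr:Linterp} that the resulting data triples are $\mathcal{F}_{0,L}$-interpolable and attain the value.

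The main obstacle is twofold. First, finding the closed form of the multipliers uniformly in $N$: the pattern emerging from $N=1,2,3$ may not be transparent, and it must moreover change shape across the crossover $h=h_N^\star$ where the optimal dual solution transitions between the two branches, so a single formula valid on all of $[0,2]$ is delicate. Second, establishing positive semidefiniteness of the slack matrix for \emph{all} $N$ and \emph{all} $h\in[0,2]$ at once: a clean rank-one factorisation would settle it, but should the matrix have higher rank on part of the range, one is pushed into Schur-complement or eigenvalue estimates that do not obviously close. It is because these steps resist a clean resolution that the statement is given as a conjecture, supported by computing the optimal value of~\eqref{PEPS:Pep_sdp} --- and certifying it rigorously through VSDP --- for a wide range of $N$ and $h$, all matching the displayed formula.
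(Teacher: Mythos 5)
Your proposal matches the paper's own treatment of this statement: the paper does not prove Conjecture~\ref{conj:GM_f_TD} either, but supports it exactly as you suggest --- tightness via the explicit one-dimensional functions $f_1$ and $f_2$, the remark that \cite{Article:Drori} proves the bound for $0\le h\le 1$ (leaving $1<h<2$ open), and upper-bound evidence obtained by solving the exact formulation \eqref{PEPS:Pep_sdp} over a grid of $N$ and $h$ with VSDP-verified intervals, rather than by exhibiting a closed-form dual certificate uniformly in $N$ and $h$. The only minor correction is that on the $\tfrac{1}{2Nh+1}$ branch the matching primal is the one-dimensional piecewise affine-quadratic $f_1$ itself (whose iterates remain in the affine piece), not an $(N+2)$-dimensional interpolation construct --- Corollary~\ref{corr:nplus2} is only an existence guarantee, and the numerically computed Gram matrices have rank one.
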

A proof of the conjecture is provided in \cite{Article:Drori} for  step sizes $0\leq h \leq 1$, leaving the case $1 < h < 2$ open. We also recall that the upper bound in this conjecture cannot be improved, as it matches the performance of the GM on two specific one-dimensional functions. Indeed, define 
\begin{align*}
&f_1(x)=\left\{\begin{array}{ll}
\frac{LR}{2Nh+1}|x|-\frac{LR^2}{2(2Nh+1)^2}\quad  & \text{if } |x| \geq \frac{R}{2Nh+1},\\[0.1cm]
\frac{L}{2}x^2 & \text{else}{,}
\end{array}
\right.\\
&f_2(x)=\frac{L}{2}x^2.
\end{align*}
It is straightforward to check that the final objective value accuracy of GM on $f_1$ is equal to {$ \frac{LR^2}{2}\frac{1}{2 Nh+1}$, and that  it is equal to $\frac{LR^2}{2}(1-h)^{2N}$ on $f_2$}. This means that the conjecture can be reformulated as saying that the worst-case behavior of the GM according to objective function accuracy is achieved by function $f_1$ or $f_2$, depending on which of the two is worst (which will depend only on the normalized step size $h$ and number of iterations $N$).

Intuitively, the behavior of GM on piecewise affine-quadratic $f_1$ corresponds to a situation in which iterates slowly approach the optimal value without oscillating around it ({i.e.,} no overshooting), whereas GM applied on purely quadratic $f_2$ generates a sequence oscillating around the optimal point. Those behaviors are illustrated on \figref{Fig:GM_on_f1_f2}. We also note that iterates for $f_1$ {stay on the affine piece of the function, and even never come close to the quadratic piece}. Interestingly, the existence of a one-dimensional worst-case function with a simple affine-quadratic shape will also be observed for the other algorithms studied in this section, both in the smooth convex and in the smooth strongly convex cases.

\begin{figure}[!ht]
\begin{center}
\begin{tabular}{p{6cm} c}
   \includegraphics[scale=0.3]{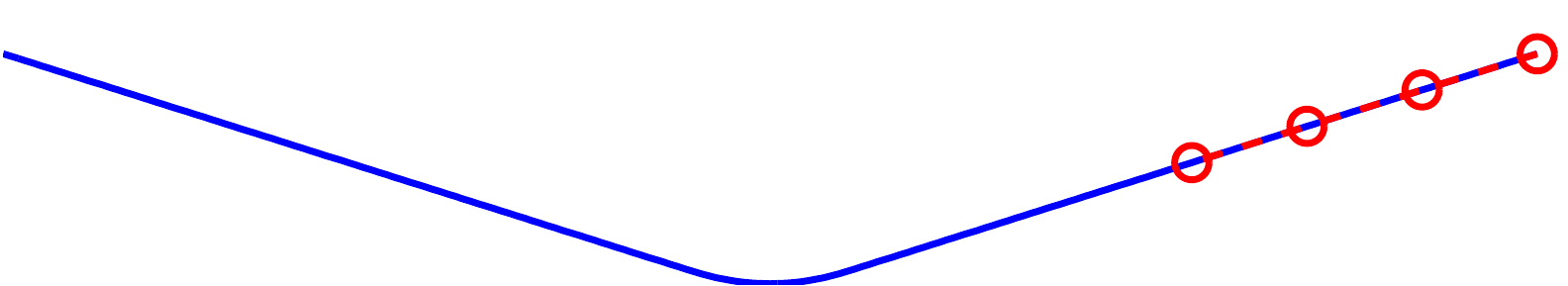} &
   \includegraphics[scale=0.3]{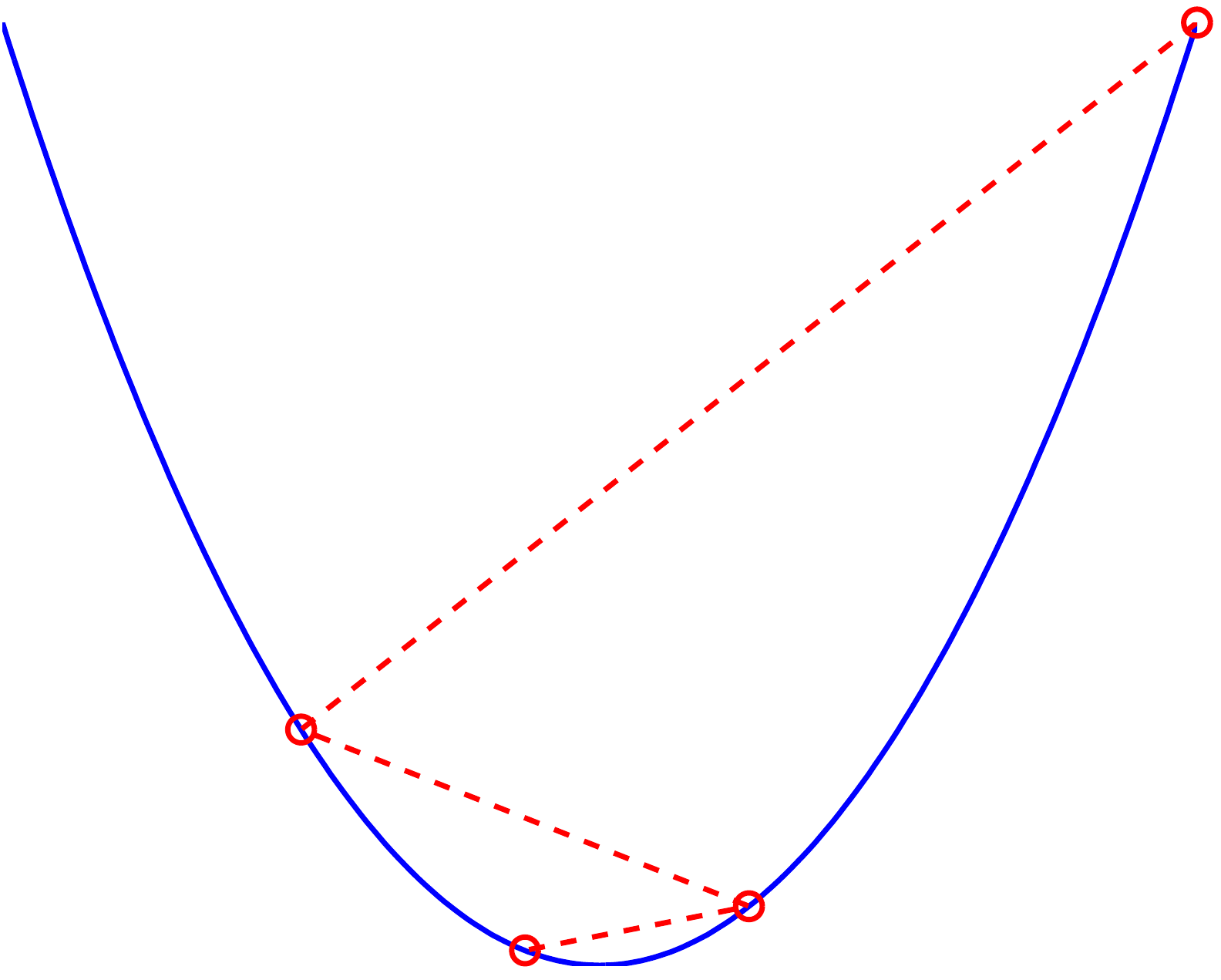} \\
\end{tabular}
\caption{Behavior of the gradient method on $f_1$ (left) and $f_2$ (right), for $L=R=1$. We observe that GM does not overshoot the optimal solution on $f_1$, while it does so at each iteration on $f_2$.} \label{Fig:GM_on_f1_f2}
\end{center}
\end{figure}

Empirical results from the numerical resolution of~\eqref{PEPS:Pep_sdp} strongly support Conjecture~\ref{conj:GM_f_TD}. Indeed, when comparing its predictions with numerically computed worst-case bounds, we obtained a maximal relative error of magnitude $10^{-7}$ (all pairs of values $N\in\left\{1,2,{\ldots},30\right\}$ and $h\in\left\{0.05,0.10,{\ldots},1.95\right\}$ were tested). {It is also worth pointing out that the Gram matrices computed numerically correspond to the one-dimensional worst-case functions $f_1$ and $f_2$ introduced above.}

Before going into the details of other methods, we underline another observation coming from~\cite{Article:Drori}: Conjecture~\ref{conj:GM_f_TD} also suggests the existence of an optimal step size $h_{\text{opt}}(N)$ for the GM --- optimal in the sense of achieving the lowest worst-case. That is, if you know in advance how many iterations of the GM you will perform, it suggests using a step size $h_{\text{opt}}(N)$ that is the unique minimizer of the right{-}hand side of the Conjecture~\ref{conj:GM_f_TD} for a fixed value of $N$. It is obtained by solving\footnote{This equation possesses several solutions, but the optimum is the unique point where the two terms feature derivatives of opposite signs (a necessary and sufficient condition for the maximum of two convex functions of one variable). This point can easily be computed numerically with an appropriate bisection method.
} the following non-linear equation in~$h_{\text{opt}}$ (for which no closed{-}form solution seems to be available):
 $$\frac{1}{2Nh_{\text{opt}}+1}=(1-h_{\text{opt}})^{2N}.$$
This optimal step size can be interpreted in terms of the trade-off between what we obtain on {functions} $f_1$ and $f_2$. On the one hand, we ensure that we are not going too slowly to the optimal point on $f_1$, and on the other hand we do not want to overshoot too much on $f_2$.

Assuming Conjecture~\ref{conj:GM_f_TD} holds true, one can show that the optimal step size is an increasing function of $N$ with $3/2\leq h_{\text{opt}}(N)<2$ and $h_{\text{opt}}(N)\rightarrow 2$ as $N\rightarrow \infty$. More precisely, working out the expression defining $h_\text{opt}$ gives the following tight lower and upper estimates:
\begin{align}
2 - \frac{\log 4N}{2N} \sim 1+(1+4N)^{-1/(2N)} \leq h_\text{opt}(N) \leq 1+(1+2N)^{-1/(2N)} \sim 2 - \frac{\log 2N}{2N}.\label{eq:upper_low_GM_smooth}
\end{align}

It is interesting to compare the results {from the relaxation (G') proposed for GM in~\cite{Article:Drori}} with ours, for values of the normalized step size $h$ that are close to $h_{\text{opt}}$. Indeed, while the results of the two formulations are {quite similar for most values of $h$}, it turns out that those from~\cite{Article:Drori} are significantly more conservative in the zone around $h_{\text{opt}}$, as presented in Table~\ref{table:GM_f_WC} for different values of $N$. This also formally establishes the fact that the formulation from~\cite{Article:Drori} is a strict relaxation of the performance estimation problem.

   \renewcommand{\arraystretch}{1.1}
   \begin{table}[ht!]                                                                                          
\centering                                                                                                 
\begin{tabular}{c|c l| l c| l c}                                                        
                                                                                                                               
  N & $h_{\text{opt}}$ & Conjecture~\ref{conj:GM_f_TD} & Value computed in~\cite{Article:Drori} & Rel. error & Value from~\eqref{PEPS:Pep_sdp} & Rel. error \\
\hline                                                                                                     
  1 & 1.5000 & $LR^2/8.00$ & $LR^2/8.00$ & 0.00 & $LR^2/8.00$ & 7e-09 \\                                
                                                                                                    
  2 & 1.6058 & $LR^2/14.85$ & $LR^2/14.54$ & 2e-02 & $LR^2/14.85$ & 5e-09  \\                            
                                                                                                    
  5 & 1.7471 & $LR^2/36.94$ & $LR^2/32.57$ & 1e-01 & $LR^2/36.94$ & 1e-08  \\                          
                                                                                                    
  10 & 1.8341 & $LR^2/75.36$ & $LR^2/59.80$ & 3e-01 &  $LR^2/75.36$ & 3e-08  \\                       
                                                                                                    
  20 & 1.8971 & $LR^2/153.77$ & $LR^2/109.58$ & 4e-01 & $LR^2/153.77$ & 6e-08  \\                    
                                                                                                    
  30 & 1.9238 & $LR^2/232.85$ & $LR^2/156.23$ & 5e-01 & $LR^2/232.85$ & 7e-08 \\                   
                                                                                                   
  40 & 1.9388 & $LR^2/312.21$ & $LR^2/201.10$ & 6e-01 & $LR^2/312.21$ & 3e-08  \\                   
                                                                                                     
  50 & 1.9486 & $LR^2/391.72$ & $LR^2/244.70$ & 6e-01 & $LR^2/391.72$ & 1e-07  \\  
                 
   100 & 1.9705 & $LR^2/790.22$ & $LR^2/451.72$ & 7e-01 & $LR^2/790.22$ & 1e-07           \\                                                                                        
\end{tabular}                                                                                              
\caption{Gradient Method with $\mu=0$, worst-case computed with relaxation from~\cite{Article:Drori} and worst-case
obtained by exact formulation~\eqref{PEPS:Pep_sdp} for the criterion $f(x_N)-f^*$. 
Error is measured relatively to the conjectured result. Results obtained with MOSEK~\cite{Article:Mosek}.}                        
\label{table:GM_f_WC}                                                                             
\end{table}
  \renewcommand{\arraystretch}{1}

These numerical results have been obtained with MOSEK, a standard semidefinite optimization solver. Despite convexity of the formulation, it might happen that the solution returned by such as solver is inaccurate, and in particular (slightly) infeasible. In that case, the objective value of the approximate primal (resp.\@ dual) solution is no longer guaranteed to be a lower (resp.\@ upper) bound on the exact optimal value, hence potentially negating the advantage of an exact convex formulation. For this reason, all numerical results reported in this section have been double checked with an interval arithmetic-based semidefinite optimization solver~\cite{Techreport:vsdp} that returns an interval that is guaranteed to contain the optimal value. These guaranteed bounds are reported in Table~\ref{table:GM_validation_example_obj_mu0} for the case $h=1.5$, which compares them with Conjecture~\ref{conj:GM_f_TD}.
   \renewcommand{\arraystretch}{1.1}
  \begin{table}[ht!]                                                                                                      
\centering                                                                                                             
\begin{tabular}{c|ccccccc}                                                                                                
N & $1$ & $2$ & $5$ & $10$ & $15$ & $20$ & $30$ \\                                                                     
\hline                                                                                                                 
Relative error (upper limit) & 2e-09 & 7e-10 & 2e-09 & 1e-09 & 9e-10 & 1e-09 & 9e-10 \\                                 
Conjecture & 1.2e-01 & 7.1e-02 & 3.1e-02 & 1.6e-02 & 1.1e-02 & 8.2e-03 & 5.5e-03 \\                        
Relative error (lower limit) & 2e-09 & 3e-09 & 9e-09 & 9e-08 & 2e-07 & 3e-07 & 9e-07 \\                        
\end{tabular}                                                                                                          
\caption{Gradient method with relative step size $h=1.5$: numerical values from Conjecture~\ref{conj:GM_f_TD} and relative error for the upper and lower limits of the guaranteed interval obtained numerically with~VSDP \cite{Techreport:vsdp} and SeDuMi \cite{Article:Sedumi}.}
\label{table:GM_validation_example_obj_mu0}                                                                                
\end{table}
  \renewcommand{\arraystretch}{1}

We can observe that the use of a verified solver does not impact our conclusions about the validity of the conjecture. Moreover, this table is typical of what we observed for all conjectures in this section: all numerical results reported were validated\footnote{Except for tests {where validation encountered} numerical difficulties, i.e for which VSDP returned no valid interval, which occurred more and more frequently as the value of the worst-case bound became close{r} to zero.}, and  in what follows we will no longer mention this verification explicitly.

Finally, we compare results obtained with Conjecture~\ref{conj:GM_f_TD} with {classical analytical bounds from the literature} for the GM with unit normalized step size $h=1$ (which is usually recommended, and sometimes called optimal). {The best analytical bound we could find, e.g. in \cite{bertsekas2015convex}, states that}
{\begin{equation}
f(x_N)-f_*\leq \frac{LR^2}{2}\frac{1}{N+1}.
\label{eq:GM_smooth_std_f}
\end{equation}
{This analytical bound is asymptotically worse by a factor of $2$ than the bound predicted by Conjecture~\ref{conj:GM_f_TD} with $h=1$. Similarly, one can investigate the effect of choosing the optimal normalized step size $h_{\text{opt}}(N)$ instead of $h=1$: Conjecture~\ref{conj:GM_f_TD} then predicts another improvement by a factor of $2$. These observations follow from the} asymptotic (large $N$) behaviors of the different worst-case bounds on $f(x_N)-f_*$, which can easily be computed:
\begin{align*}
\text{Conjecture~\ref{conj:GM_f_TD} with $h=1$}{\longrightarrow} \frac{LR^2}{2} \frac{1}{2N+1}, \quad \text{Conjecture~\ref{conj:GM_f_TD}  with $h=h_\text{opt}(N)$ } \underset{N\to\infty}{\longrightarrow} \frac{LR^2}{2}\frac{1}{4N+1}.
\end{align*}}
\subsubsection{A generalized conjecture for strongly convex functions}\label{sss:sc}
In view of the encouraging results obtained for the GM in the smooth case, we now study the behavior of the GM on the class of strongly convex functions $\mathcal{F}_{\mu,L}{(\mathbb{R}^d)}$ using our formulation \eqref{PEPS:Pep_sdp} with the same performance criterion, objective function accuracy. It turns out that the solution for every problem consisted again in a one-dimensional worst-case function ($\text{rank}\ G = 1$) of the same piecewise quadratic type. We therefore introduce the following general definitions for functions $f_{1,\tau}$ and 
$f_{2}$:{
\begin{align*}
f_{1,\tau}(x)&=\left\{\begin{array}{ll}
\frac{\mu}{2}x^2 + a_\tau |x| +b_\tau \quad  & \text{if } |x| \geq \tau \\[0.1cm]
\frac{L}{2}x^2 & \text{else},
\end{array}
\right.\\[0.2cm]
f_{2}(x)&=\frac{L}{2}x^2,\notag
\end{align*}
where scalars $a_\tau = (L-\mu) \tau$ and $b_\tau = -\bigl(\frac{L-\mu}{2}\bigr) \tau^2$ are chosen to ensure continuity of $f_{1,\tau}$ and its first derivative, and $\tau$ is a parameter that controls the radius of the central quadratic piece (with the largest curvature).} Although the value of parameter $\tau$ could in principle be estimated from the numerical solutions of our problems, it turns out it can be computed {analytically} {by} maximizing the {final objective value} $f_{1,\tau}(x_N)$ {(assuming that all iterates stay in the affine zone $|x| \ge \tau$), which} then leads to  {
\begin{equation} \tau=\frac{R \kappa }{(\kappa-1)+(1-\kappa h)^{-2N}}\label{deftau}\end{equation}}
where $\kappa = \frac{\mu}{L}$ is the inverse condition number of the problem class $f\in\mathcal{F}_{\mu,L}{(\mathbb{R}^d)}$. We are now able to extend Conjecture~\ref{conj:GM_f_TD} to the GM applied to strongly convex functions.
{\begin{conjecture} {Any sequence of iterates $\left\{x_i\right\}$ generated by the gradient method GM with constant normalized step sizes $0\leq h \leq 2$ on a smooth strongly convex function} $f\in\mathcal{F}_{\mu,L}{(\mathbb{R}^d)}$ satisfies
\[ f(x_N)-f_*\leq {{\frac{LR^2}{2}}} \max\left(\frac{\kappa}{(\kappa-1) +(1-\kappa h)^{-2N}},(1-h)^{2N} \right). \] \label{conj:GM_f_mu}
\end{conjecture}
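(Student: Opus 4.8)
The plan is to prove the conjecture by establishing matching lower and upper bounds for the semidefinite program \eqref{PEPS:Pep_sdp} instantiated with the performance criterion $\mathcal{P}=f(x_N)-f_*$ and the constant-step matrix $H$ of GM with normalized step $h$. By Theorem~\ref{thm:sdp_pep}, for $d\ge N+2$ the optimal value of this SDP is exactly the worst-case of $f(x_N)-f_*$ over $\mathcal{F}_{\mu,L}(\mathbb{R}^d)$, so it suffices to show this value equals $\frac{LR^2}{2}\max\bigl(\frac{\kappa}{(\kappa-1)+(1-\kappa h)^{-2N}},(1-h)^{2N}\bigr)$. The lower bound will come from two explicit one-dimensional feasible functions, and the upper bound from an explicit feasible solution of the dual \eqref{dualPep} together with weak duality.

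For the lower bound I would exhibit the two candidate worst-case functions. For $f_2(x)=\frac{L}{2}x^2$ started at $x_0=R$, the iteration reads $x_{i+1}=(1-h)x_i$, hence $f_2(x_N)-f_*=\frac{LR^2}{2}(1-h)^{2N}$. For $f_{1,\tau}$ with $\tau$ given by \eqref{deftau} started at $x_0=R$, I would first check $f_{1,\tau}\in\mathcal{F}_{\mu,L}(\mathbb{R})$ --- the inner piece has curvature $L$, the outer pieces have curvature $\mu$, and $a_\tau,b_\tau$ are chosen precisely so that $f_{1,\tau}$ is $C^1$, whence $f_{1,\tau}-\frac{\mu}{2}x^2$ is convex and $\nabla f_{1,\tau}$ is $L$-Lipschitz --- then verify by induction that all iterates stay in the affine region $\{|x|\ge\tau\}$, where GM obeys the linear recursion $x_{i+1}=(1-\kappa h)x_i-h(1-\kappa)\tau$ and the specific value \eqref{deftau} of $\tau$ is what makes this hold with equality at the endpoint, and finally evaluate $f_{1,\tau}(x_N)-f_*=\frac{LR^2}{2}\frac{\kappa}{(\kappa-1)+(1-\kappa h)^{-2N}}$. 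Since both functions are feasible, this already yields ``$\ge$'' with the maximum.

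For the upper bound I would construct an explicit feasible point $(\lambda_{ij},\tau)$ of \eqref{dualPep} with objective $\tau R^2$ equal to the claimed value. Guided by the structure found by Drori and Teboulle in the non--strongly convex case, I expect only the multipliers $\lambda_{i,i+1}$ (consecutive iterates), $\lambda_{i,*}$ and possibly $\lambda_{*,i}$ to be nonzero; the linear constraint $b-\sum_{i,j}\lambda_{ij}(u_j-u_i)=0$ then reduces the search to a one-parameter family, and positive semidefiniteness of $\tau A_R-C+\sum_{i,j}\lambda_{ij}A_{ij}$ should be established by exhibiting an explicit low-rank factorization (of rank at most $N+1$, as forced by complementary slackness with the rank-one optimal primal). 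The certificate will necessarily differ in the two regimes of the maximum, with a crossover at the value of $h$ (depending on $N$ and $\kappa$) at which the two terms coincide: for $h$ close to $2$ the pure quadratic $f_2$ is worst, while for smaller $h$ the piecewise function $f_{1,\tau}$ dominates.

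The main obstacle is exactly this dual construction in the overshooting regime: already for $\mu=0$ this is the case $1<h<2$ left open in \cite{Article:Drori}, and since the present statement reduces to Conjecture~\ref{conj:GM_f_TD} as $\kappa\to0$, any complete proof must in particular settle that open case and then propagate it through the curvature-subtraction and conjugation reductions of Section~\ref{sec:smoothcvxinterp} (Theorem~\ref{thm:gencvxcomp}). A possibly more tractable alternative is a direct ``proof by combination of inequalities'': find nonnegative $\lambda_{ij}$ so that the nonnegative combination $\sum_{i,j}\lambda_{ij}$ of the interpolation inequalities~\eqref{eq:Cond_Lmu_interp}, together with $\tau$ times the constraint $\norm{x_0-x_*}_2^2\le R^2$, collapses --- after substituting $x_{i+1}=x_i-\frac{h}{L}g_i$ and $g_*=0$ --- to the desired bound, in the spirit of the single-step example at the end of Section~\ref{sec:Peps}; this is equivalent to the dual construction but may be easier to discover by telescoping. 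A Lyapunov/potential-function argument is a further option, though the change of regime in the maximum makes it unlikely that a single potential will cover the whole range $0\le h\le2$.
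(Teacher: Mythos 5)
First, note that the statement you are addressing is a \emph{conjecture} in the paper: the authors do not prove it, but support it by solving the exact formulation \eqref{PEPS:Pep_sdp} numerically (with verified SDP solvers) over a grid of $(N,h,\kappa)$ and observing agreement with the formula, together with the explicit one-dimensional functions $f_{1,\tau}$ and $f_2$ that attain it. Your proposal correctly identifies what a genuine proof would require --- matching the known lower bound with a dual feasible certificate for \eqref{dualPep} --- but it does not supply that certificate. The entire upper-bound half of your argument is a program rather than an argument: you specify which multipliers you \emph{expect} to be nonzero and assert that positive semidefiniteness of the dual slack matrix ``should be established by exhibiting an explicit low-rank factorization,'' without producing the multipliers, the factorization, or even the crossover value of $h$ separating the two regimes. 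Since the lower bound (the behavior of GM on $f_{1,\tau}$ and $f_2$) is already stated in the paper, the upper bound is precisely the content of the conjecture, and you yourself concede it subsumes the case $1<h<2$ of Conjecture~\ref{conj:GM_f_TD} left open in \cite{Article:Drori}. So the proposal is a reasonable proof strategy, consistent with the framework of Theorem~\ref{thm:sdp_pep}, but it is not a proof, and it cannot be credited as resolving a statement that the paper itself only conjectures.

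A secondary inaccuracy: you justify the choice \eqref{deftau} of $\tau$ by saying it ``makes this hold with equality at the endpoint,'' i.e.\ $x_N=\tau$. That condition actually yields \eqref{deftau2} (exponent $-N$), which is the value used for the gradient-norm criterion; the value \eqref{deftau} with exponent $-2N$ comes instead from maximizing $f_{1,\tau}(x_N)$ over $\tau$ under the assumption that all iterates remain in the affine zone, and with that choice one has $x_N\ge\tau$ but in general $x_N\neq\tau$. Your verification that the iterates stay in the region $|x|\ge\tau$ (and hence your evaluation of the lower bound) should be rewritten around the correct characterization of $\tau$; as written, the induction you describe is anchored to the wrong defining property.
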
}
{As in the previous section, this conjecture states} that the worst-case behavior of the GM according to objective function accuracy is achieved by function $f_{1,\tau}$ or $f_2$, depending on which of the two is worse. Proceeding now to its numerical validation, we first point out that our results are intrinsically limited to the accuracy that can be reached by numerical SDP solvers. For this reason, we only report on situations for which Conjecture~\ref{conj:GM_f_mu} predicts a final accuracy larger than $10^{-6}$,  ensuring a few significant digits for the numerical results. The resulting estimated relative differences between Conjecture~\ref{conj:GM_f_mu} and the numerical results obtained with~\eqref{PEPS:Pep_sdp} are given in Table~\ref{table:Gradient_Total_rel_f}, for different values of $\kappa$. We observe that the {c}onjecture is very well supported by our numerical results, with a largest relative error around $10^{-6}$, reached for the largest value of $\kappa$ considered here. This is expected as GM tends to perform better as $\kappa$ increases ({i.e.,}\@ final accuracy $f(x_N)-f_*$ approaches zero), which renders a precise comparison between numerical results and the conjecture more and more difficult.    \renewcommand{\arraystretch}{1.1}
  \begin{table}[ht!]                                                                                                                                                         
\centering                                                                                                                                                                
\begin{tabular}{c|cccccccc}                                                                                                                                      
                                                                                                                                                        
 $\kappa=$ & $0$ & $.001$ & $.005$ & $.010$ & $.015$ & $0.1$ & $0.2$ & $0.5$ \\
\hline                                                                                                                                                                                                                                                                                          
Rel. error & 6e-10 & 7e-10 & 4e-10 & 6e-10 & 8e-10 & 2e-07 & 9e-08 & 1e-06
\end{tabular}                                                                                                                                                             
\caption{Maximum relative estimated differences between Conjecture~\ref{conj:GM_f_mu} and corresponding numerical results obtained with SeDuMi~\cite{Article:Sedumi}. The maximum is taken over all $N\in\left\{1,\hdots,30\right\}$ and $h\in\left\{0.05,\hdots,1.95\right\}$ for which the conjecture predicts a worst-case larger than $10^{-6}$.}
\label{table:Gradient_Total_rel_f}                                                                                                                                              
\end{table}      
  \renewcommand{\arraystretch}{1}                                                      

We now investigate some consequences of {our} conjecture. First, we note that Conjecture~\ref{conj:GM_f_mu} tends to Conjecture~\ref{conj:GM_f_TD} as $\mu$ tends to zero. {This is a consequence of the fact that $\tau$ tends to $\frac{R}{2Nh+1}$ as $\kappa$ tends to zero (one can also check that function $f_{1,\tau}$ tends to function $f_1$ introduced earlier).} Hence our formulation~\eqref{PEPS:Pep_sdp} {closes} an apparent gap between worst-case analyses {of} the smooth convex and the smooth strongly convex cases. Indeed, to the best of our knowledge, existing worst-case bounds for the smooth strongly convex case do not converge to the smooth case as $\mu \to 0$. 

{It is also interesting to compare our results to those obtained with the IQC methodology of \cite{lessard2014analysis}. If we only care about asymptotic linear rates of convergence, Conjecture~\ref{conj:GM_f_mu} predicts \[   f(x_N)-f_* \leq  \frac{LR^2}{2}  \max \bigl\{ \kappa\, \rho_1^{2N} ,\rho_2^ {2N}  \bigr\}  \text{ with } \rho_1 = |1-\kappa h | \text{ and } \rho_2 = |1-h|  \]
(the first term in the $\max$ was obtained by neglecting  $(\kappa-1)$ in the denominator). On the other hand \cite[Section~4.4]{lessard2014analysis} proves that the distance to the solution converges 
linearly  according to  \[ \|x_N - x_*\| \le \rho^N \|x_0 - x_*\| \text{ with a factor } \rho = \max \{ \rho_1 ,\rho_2 \} \] with the same values for $\rho_1$ and $\rho_2$. This matches our asymptotic rate up to a multiplicative constant.}

{A}s for Conjecture~\ref{conj:GM_f_TD}, our new Conjecture~\ref{conj:GM_f_mu} suggests optimal step sizes $h_{\text{opt}}(N,\kappa)$, which can be obtained by solving the {equation} (for $0<\kappa<1$)
\begin{equation} \label{eq:eqhoptsc} \frac{\kappa}{(\kappa-1)+(1-\kappa h_{\text{opt}})^{-2N}}=(1-h_{\text{opt}})^{2N} \end{equation}
(note that one recovers the previous equation for $h_\text{opt}$ when $\mu$ tends to zero).
For a given $N$, as $\kappa$ increases from $0$ to $1$, those optimal step sizes decrease from $h_{\text{opt}}(N,0)$ (optimal step size in the smooth case) to $h_{\text{opt}}(N,1)=1$ (the latter being expected since it can only correspond to the case of function $f_2$ in the original~\eqref{Intro:PEP}, for which the GM with $h=1$ converges in one iteration). For a given $\kappa$, we find that $h_{\text{opt}}(N,\kappa)$ increases as $N$ increases, as in the smooth convex case, according to the following lower and upper estimates 
{\begin{align}
1+\left(\frac{\kappa-1}{\kappa}  + \frac1\kappa \Bigl( \frac{1+\kappa}{1-\kappa} \Bigr)^{2N}  \right)^{-\frac{1}{2N}}\leq & h_\text{opt}(N,\kappa) \label{eq:upper_low_GM_smoothstrcvx} \leq \min\left\{1+\left(  \frac{(\kappa-1)}{\kappa}+\frac1\kappa (1-\kappa )^{-2N}  \right)^{-\frac{1}{2N}},\frac{2}{1+\kappa}\right\}
\end{align}}which both tend to $\frac{2}{1+\kappa}$ as $N$ increases {(the first term appearing in the $\min$ of the upper bound tends to $2-\kappa$, which is always greater than $\frac{2}{1+\kappa}$)}. {This limiting normalized step size $\frac{2}{1+\kappa}$} corresponds to step size $\frac{2}{L+\mu}$ that is often recommended for the GM{, and sometimes called optimal}.   

We now illustrate the improvements provided by Conjecture~\ref{conj:GM_f_mu} with respect to the classical analytical worst-case bound found in {the literature. When using normalized step size $h=\frac{2}{1+\kappa}$, iterates from} GM applied to functions in $\mathcal{F}_{\mu,L}{(\mathbb{R}^d)}$ {are known to satisfy (see \cite{Book:Nesterov} for example)}
\begin{equation}
f(x_N)-f_*\leq \frac{LR^2}{2}\left({\frac{1-\kappa}{1+\kappa}}\right)^{2N}{.}
\label{eq:GM_std_f}
\end{equation} {On the other hand, as the number of steps $N$ tends to infinity, the true worst-case predicted by Conjecture~\ref{conj:GM_f_mu} for the same step size asymptotically tends to $\frac{L R^2}{2} \bigl(\frac{1-\kappa}{1+\kappa}\bigr)^{2N}$, which is exactly the same as \eqref{eq:GM_std_f}. 
Indeed, one can check that this rate is equal to the second term appearing in the $\max$ of Conjecture~\ref{conj:GM_f_mu}, while the first term tends to $\frac{L R^2}{2} \kappa \bigl(\frac{1-\kappa}{1+\kappa}\bigr)^{2N}$ which is always smaller.}

{One can however do better using the optimal step size $h_\text{opt}$. Since it is not closed-form, we use the following approximate expression obtained after solving a suitable approximation of equation \eqref{eq:eqhoptsc}
\[ \tilde{h}_{\text{opt}}(N)=\frac{1+\kappa^{\frac{1}{2N}}}{1+\kappa^{1+\frac{1}{2N}}} \] (note that $\tilde{h}_{\text{opt}}(N)$ tends to $\frac{2}{1+\kappa}$ as $N$ grows), and find that Conjecture~\ref{conj:GM_f_mu} predicts a worst-case bound tending to
\[ \frac{LR^2}{2} \kappa^{\frac{1}{1+\kappa}} \left(\frac{1-\kappa}{1+\kappa}\right)^{2N} \] which improves the asymptotic rate by a factor $\bigl(\frac{1}{\kappa}\bigr)^{\frac{1}{1+\kappa}}$ (which can be shown to lie between $\frac{3}{4}\frac{1}{\kappa}$ and $\frac{1}{\kappa}$).}

\subsubsection{A conjecture on the gradient norm}
We now consider a  different performance criterion, given by the norm of the gradient computed at the last iterate. Numerical experiments with our formulation suggest that results similar to those presented in the previous sections can be obtained both 
in the smooth convex and smooth strongly convex cases, based again on one-dimensional piecewise quadratic worst-case functions. Using the same definition for functions $f_{1,\tau}$ and $f_2$, and choosing {now the parameter $\tau$ according to}
{\begin{equation}\tau=\frac{R \kappa}{(\kappa-1)+(1-\kappa h)^{-N}}
,\label{deftau2}\end{equation}}we propose the following conjecture.
{\begin{conjecture} {Any sequence of iterates $\left\{x_i\right\}$ generated by the gradient method GM with constant normalized step sizes $0\leq h \leq 2$ on a smooth strongly convex function} $f\in\mathcal{F}_{\mu,L}{(\mathbb{R}^d)}$ satisfies
$$\norm{\nabla f(x_N)}_2\leq {L R}\max\left(\frac{\kappa}{(\kappa-1)+ (1 - \kappa h)^{-N} },|1-h|^{N}\right).$$
\label{conj:GM_g_mu}
\end{conjecture}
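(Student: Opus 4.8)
The plan is to establish the conjectured bound in the two-sided way used throughout the paper: first show it is attained (a lower bound on the worst case), then certify that it is never exceeded (a matching upper bound), the latter through the dual semidefinite program \eqref{dualPep}.

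\textbf{Attainment.} I would check that the maximum of the two terms is realized on the one-dimensional functions $f_2$ and $f_{1,\tau}$ already introduced. On $f_2(x)=\tfrac{L}{2}x^2$ started at $x_0=R$, the normalized step $h/L$ gives $x_i=(1-h)^iR$ and $\nabla f(x_N)=L(1-h)^NR$, so $\norm{\nabla f(x_N)}_2=LR|1-h|^N$, which is the second term. On $f_{1,\tau}$ with $\tau$ chosen exactly as in \eqref{deftau2}, started at $x_0=R$, the iteration restricted to the affine region $|x|\ge\tau$ obeys the affine recursion $x_{i+1}=(1-\kappa h)x_i-h(1-\kappa)\tau$; solving this recursion and noting that $x_i>\tau>0$ forces $x_{i+1}<x_i$, one sees the iterates decrease monotonically, and the particular value of $\tau$ in \eqref{deftau2} is precisely the one for which $x_N=\tau$, so that all iterates $x_0,\dots,x_N$ indeed remain in the affine region. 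Evaluating the gradient there yields $\nabla f(x_N)=(1-\kappa h)^N\bigl(\mu R+L(1-\kappa)\tau\bigr)=\tfrac{L\kappa R}{(\kappa-1)+(1-\kappa h)^{-N}}$, the first term. Some care is needed with signs: when $1-\kappa h<0$ or $1-h<0$ one must argue separately that the corresponding term is not the active one in the $\max$, or work directly with absolute values. Taking the worse of the two functions shows the right-hand side of Conjecture~\ref{conj:GM_g_mu} is a lower bound on the worst-case value for the squared-gradient criterion.

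\textbf{Upper bound.} Here I would search for an explicit dual feasible point of \eqref{dualPep} for the criterion $\mathcal{P}_{b,C}$ with $b=0$ and $C=u_N u_N^{\top}$, which encodes $\norm{g_N}_2^2=G_{N,N}$ after substituting the fixed-step iterates of GM. Complementary slackness with the two extremal functions above dictates which inequalities of Theorem~\ref{thm:gencvxcomp} carry nonzero multipliers — essentially the pairs of consecutive iterates, their reverses, and the pairs involving $x_*$. Since the optimal Gram matrix is rank one, the dual slack matrix $\tau A_R-C+\sum_{i,j}\lambda_{ij}A_{ij}$ should be of very low rank, so the task reduces to (i) guessing closed forms for the multipliers $\lambda_{ij}\ge0$ and $\tau\ge0$, plausibly geometric sequences in $\rho_1=1-\kappa h$ and $\rho_2=1-h$, such that the linear constraint $\sum_{i,j}\lambda_{ij}(u_j-u_i)=0$ holds and $\tau R^2$ equals $L^2R^2\max(\cdot)^2$, and (ii) verifying that the slack matrix factors as an outer product (or a short sum of outer products), hence is positive semidefinite. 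Once found, by weak duality this certifies the upper bound, and, as promised after Theorem~\ref{prop:main}, it can be rewritten as a fully explicit proof: an identity expressing $\norm{\nabla f(x_N)}_2^2$ as a nonnegative combination of interpolation inequalities plus $\lambda_R\bigl(R^2-\norm{x_0-x_*}_2^2\bigr)$ plus a manifest square, in the style of the worked example at the end of Section~\ref{sec:Peps}.

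\textbf{Main obstacle.} The crux — and the reason this is stated as a conjecture rather than a theorem — is producing this dual certificate in closed form, uniformly in $N$, $h\in[0,2]$ and $\kappa\in[0,1)$, and proving the resulting linear matrix inequality stays positive semidefinite. The $\max$ in the bound means the certificate almost certainly changes shape according to which term is active, with a non-smooth transition near the optimal step size governed by equations of the type \eqref{eq:upper_low_GM_smoothstrcvx}, so one would have to build and patch a certificate on each side of that threshold and then handle the boundary case; the sign regimes $1-\kappa h<0$ and $1-h<0$ add further bookkeeping. An alternative route that might avoid solving the parametric SDP explicitly is an inductive argument tracking $\norm{\nabla f(x_i)}_2$ together with an auxiliary potential function, but designing a potential that is simultaneously tight for the oscillating quadratic $f_2$ and for the monotone affine-quadratic $f_{1,\tau}$ appears to be the genuinely hard step.
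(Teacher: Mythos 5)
The statement you were asked about is labelled a \emph{conjecture} in the paper, and the paper does not prove it: its only support consists of (i) the explicit one-dimensional candidates $f_{1,\tau}$ (with $\tau$ as in \eqref{deftau2}) and $f_2$ on which the two terms of the bound are attained, and (ii) numerical solution of the exact formulation \eqref{PEPS:Pep_sdp} for many values of $(N,h,\kappa)$, double-checked with a verified SDP solver. Your ``attainment'' half coincides with (i) and your computations there are correct: on $f_2$ the iterates are $x_i=(1-h)^iR$ giving $\norm{\nabla f(x_N)}_2=LR|1-h|^N$, and on $f_{1,\tau}$ the affine-region recursion $x_{i+1}=(1-\kappa h)x_i-h(1-\kappa)\tau$ with the $\tau$ of \eqref{deftau2} indeed yields $x_N=\tau$ and $\norm{\nabla f(x_N)}_2=L\tau=\frac{L\kappa R}{(\kappa-1)+(1-\kappa h)^{-N}}$, i.e.\ the first term; your caveat about the sign regimes $1-\kappa h<0$, $1-h<0$ is appropriate. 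So the lower-bound direction is essentially the paper's own construction.

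The genuine gap is the upper bound: what you give there is a plan (guess closed-form multipliers for \eqref{dualPep} with $b=0$, $C=u_Nu_N^{\top}$, show the slack matrix is low-rank PSD, patch certificates across the threshold where the active term of the $\max$ changes), not a certificate, and you say so yourself. Nothing in your text actually establishes positive semidefiniteness of any candidate dual slack matrix, so the statement remains unproved by your argument. To be clear, this does not put you behind the paper: the authors also lack such a certificate, which is exactly why the statement is presented as a conjecture, and they replace the analytic dual feasible point by numerically solving the primal SDP (with the squared-norm criterion $G_{N,N}$, using the $L^2R^2$ homogeneity) and validating the result with interval arithmetic; Theorem~\ref{prop:main} only guarantees that a tight dual certificate \emph{exists}, not that it has a tractable closed form. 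Where you differ from the paper is thus in the verification route --- you aim at an analytic, $N$-uniform dual certificate (or a potential-function induction), while the paper settles for instance-by-instance exact numerical computation; your diagnosis that the case split induced by the $\max$ and the sign regimes is the hard obstruction is accurate, but as it stands your proposal proves only the attainment half.
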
}

As for Conjecture~\ref{conj:GM_f_mu}, we limit our numerical validation to the cases where the worst-case values predicted by the Conjecture are larger than $10^{-6}${; the largest relative error is about $10^{-7}$.}
 
We note that, as $\kappa$ tends to zero ({i.e.,}\@ the smooth case), Conjecture~\ref{conj:GM_g_mu} tends to  
$$\norm{\nabla f(x_N)}_2\leq {L R}\max\left(\frac{1}{Nh+1},|1-h|^{N} \right).$$
From that, we see that the optimal step size $h^\nabla_{\text{opt}}(N,0)$ for the GM is again an increasing function of $N$ with $\sqrt{2}\leq h^\nabla_{\text{opt}}(N,0) < 2$ and $h^\nabla_{\text{opt}}(N,0)\rightarrow 2$ as $N\rightarrow \infty$. {In the strongly convex case $\kappa > 0$, t}he optimal step size is a decreasing function of $\kappa$ and satisfies $h^\nabla_{\text{opt}}(N,\kappa)\rightarrow 1$ as $\kappa\rightarrow 1$. As in the previous case, $h^\nabla_{\text{opt}}(N,\kappa)$ is bounded above by $\frac{2}{1+\kappa}$, which we can confirm with the following lower and upper bounds on $h^\nabla_\text{opt}$:
\begin{align*}
1+\left(\frac{\kappa-1}{\kappa} + \Bigl( \frac{1+\kappa}{1-\kappa}\Bigr)^N\right)^{-1/N}\leq h^\nabla_{\text{opt}}(N,\kappa) \leq \min\left\{1+\left( \frac{\kappa-1}{\kappa} +\frac1\kappa (1-\kappa)^{-N} \right)^{{-1/N}},\frac{2}{1+\kappa}\right\}.
\end{align*}
In the smooth case, those bounds reduce to the simpler {expression}
\begin{align*}
2-\frac{\log{2N}}{N} \sim 1+\left(1+2N\right)^{-1/N} \leq h^\nabla_\text{opt} \leq 1+\left(1+N\right)^{-1/N} \sim 2 -\frac{\log{N}}{N}.
\end{align*}
We now compare with a standard analytical worst-case bound. {T}he iterates of the GM method {with normalized step size $\frac{2}{1+\kappa}$ are known to} satisfy \begin{equation}
 \norm{x_N-x_*}_2\leq R\left(\frac{1-\kappa}{1+\kappa}\right)^{{N}\!} {\text{ and }}\quad  \norm{\nabla f(x_N)}_2\leq L\norm{x_N-x_*}_2\leq LR\left(\frac{1-\kappa}{1+\kappa}\right)^{{N}\!} \label{eq:GM_std_g}
\end{equation}
{(see for example~\cite{Book:Nesterov} for the left inequality, and use the $L$-Lipschitz property of the gradient along with $\nabla f(x_*)=0$ to derive the right inequality). The latter estimate is tight according to Conjecture~\ref{conj:GM_g_mu}. Using the following approximate optimal step size}
\[ \tilde{h}_{\text{opt}}^\nabla(N)=\frac{1+\kappa^{\frac{1}{N}}}{1+\kappa^{1+\frac{1}{N}}} \] {(which tends to $\frac{2}{1+\kappa}$ as $N$ grows)
can be shown to improve the conjectured asymptotic rate by the same factor $\kappa^{-\frac{1}{1+\kappa}}$ as in the previous section.}

\subsection{Fast gradient method and optimized gradient method}
In this section we assess the performance in the smooth convex case {(}$\mu=0${)} of two accelerated first-order methods: the so-called fast gradient method (FGM) due to Nesterov~\cite{Nesterov:1983wy},  and an optimized gradient method (OGM) recently proposed by Kim and Fessler~\cite{kim2014optimized}.

\begin{center}
\fbox{
\parbox{0.98\textwidth}{

        \textbf{Fast Gradient Method (FGM)}
  \begin{itemize}
  \item[] Input: $f\in\mathcal{F}_{0,L}{(\mathbb{R}^d)}$, $x_0\in\mathbb{R}^d$, $y_0=x_0$, $\theta_0=1$.\\[-0.2cm]
  \item[] For $i=0:N-1$\\[-0.5cm]
      \begin{align*}
    &y_{i+1}=x_i-\frac{1}{L}\nabla f(x_i)\\
    &\theta_{i+1}= \frac{1+\sqrt{4\theta_i^2 +1}}{2}\\
    &x_{i+1}=y_{i+1}+\frac{\theta_i-1}{\theta_{i+1}} (y_{i+1}-y_{i})
    \end{align*}
  \end{itemize}
}}
\end{center}
\begin{center}
\fbox{
\parbox{0.98\textwidth}{
        \textbf{Optimized Gradient Method (OGM)}
  \begin{itemize}
  \item[] Input: $f\in\mathcal{F}_{0,L}{(\mathbb{R}^d)}$, $x_0\in\mathbb{R}^d$, $y_0=x_0$, $\theta_0=1$.\\[-0.2cm]
  \item[] For $i=0:N-1$\\[-0.5cm]
      \begin{align*}
    y_{i+1}&=x_i-\frac{1}{L}\nabla f(x_i)\\
    \theta_{i+1}&=\left\{
    \begin{array}{ll}
    \frac{1+\sqrt{4\theta_i^2 +1}}{2}, & i\leq N-2\\
    \frac{1+\sqrt{8\theta_i^2 +1}}{2}, & i=N-1 
    \end{array}
    \right.\\
    x_{i+1}&=y_{i+1}+\frac{\theta_i-1}{\theta_{i+1}} (y_{i+1}-y_{i}) +\frac{\theta_i}{\theta_{i+1}} (y_{i+1}-x_i)
    \end{align*}
  \end{itemize}
}}
\end{center}
Both of these algorithms are defined in terms of  two sequences: $\left\{y_i\right\}_i$ is a primary sequence, and $\left\{x_i\right\}_i$ is a secondary sequence, where the gradient is evaluated. We first show that both of these algorithms can be expressed as fixed-step first-order methods{, which we defined as}
$${x_{i}=x_{0}-\sum_{k=0}^{i-1} h_{i,k} \nabla f(x_{k}) \quad \text{(for $L=1$)}.}$$
{One way to proceed is to} focus on the secondary sequence  $\left\{x_i\right\}_i$ and substitute the $y_i$'s in the algorithm formulation. For FGM{, we have}
\begin{align*}
x_{i+1}&=x_i-\frac{g_i}{L}+\frac{\theta_i -1}{\theta_{i+1}}\left( x_i-x_{i-1}-\frac{g_i}{L}+\frac{g_{i-1}}{L}\right),\\
&=x_i+\frac{\theta_i-1}{\theta_{i+1}} (x_i-x_{i-1}) -\left(\frac{\theta_i-1}{\theta_{i+1}}+1\right)\frac{g_i}{L}+\frac{\theta_i-1}{\theta_{i+1}}\frac{g_{i-1}}{L},
\end{align*}
which allows {to obtain} the step sizes relative to $x_0$ by recurrence:
\begin{align*}
h_{i+1,k}=\left\{\begin{array}{ll}
h_{i,k}+\frac{\theta_{i}-1}{\theta_{i+1}}\left(h_{i,k}-h_{i-1,k}\right) \quad & \text{     if } k \leq i-2, \\
h_{i,k}+\frac{\theta_{i}-1}{\theta_{i+1}}(h_{i,k}-1) & \text{   if } k = i-1, \\
\frac{\theta_{i}-1}{\theta_{i+1}}+1 & \text{   if } k = i, \\
\end{array}\right.
\end{align*}
with initial conditions $h_{1,0}=1$, $h_{1,k}=0$ if $k<0$ and $h_{0,k}=0$ for all $k$. Similarly, {we have} for OGM
\begin{align*}
h_{i+1,k}=\left\{\begin{array}{ll}
h_{i,k}+\frac{\theta_i-1}{\theta_{i+1}}\left(h_{i,k}-h_{i-1,k}\right) \quad & \text{     if } k \leq i-2, \\
h_{i,k}+\frac{\theta_i-1}{\theta_{i+1}}(h_{i,k}-1) & \text{   if } k = i-1, \\
\frac{2\theta_i-1}{\theta_{i+1}}+1 & \text{   if } k = i, \\
\end{array}\right.
\end{align*}
with the same initial conditions. This approach {will provide} estimates for the last secondary iterate $x_N$. If an estimate for last primary iterate $y_N$ is needed, one {just has} to replace the expression of $x_N$ by $y_N$, which is done by using the following alternative coefficients for the last step:
\begin{align*}
h_{N,k}=\left\{\begin{array}{ll}
h_{N-1,k} & \text{   if } k \leq N-2, \\
1 & \text{   if } k=N-1, \\
\end{array}\right.
\end{align*}
for both FGM and OGM.

Again, our numerical experiments strongly suggest the same assumption about the shape of the worst-case functions, {i.e.,}\@ one-dimensional and piecewise quadratic ({with} iterates staying in the affine zone of $f_{1,\tau}$). Using this {property}, we are able {to} compute {the following} values of $\tau$ achieving the worst-case final {objective} accuracy{, which surprisingly hold for both the classical FGM and the more recent OGM (a coincidence for which we can offer no explanation)}
{\[ \tau_1=\frac{R}{2\sum_{k=0}^{N-2}h_{N-1,k} +3} \text{ for the primary sequence, } \tau_2=\frac{R}{2\sum_{k=0}^{N-1}h_{N,k} +1}\text{ for the secondary sequence.}\]}Our numerical results suggest the following {two}  conjectures ({validations for both conjectures were performed for values of} $N\in \left\{1,\hdots,100\right\}$ and {displayed} a relative error {less than} $10^{-4}$). 
{\begin{conjecture} 
Any (primary) sequence of iterates $\left\{y_i\right\}$ generated by the fast gradient methd FGM (resp. optimized gradient method OGM) on a smooth convex function $f\in\mathcal{F}_{0,L}{(\mathbb{R}^d)}$ satisfies
\[ f(y_N)-f_*\leq f_{1,\tau_1}(y_{1,N})=\frac{LR^2}{2}\frac{1}{2\sum_{k=0}^{N-2} h_{N-1,k}+3}, \]
where $y_{1,N}$ is the final (primary) iterate computed by FGM (resp. OGM) applied to $f_{1,\tau_1}$ starting from $x_0=R$, and quantities $h_{N-1,k}$ are the fixed coefficients of the last step of FGM (resp. OGM).
\label{conj:FGM_OGM_prim}
\end{conjecture}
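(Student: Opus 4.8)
The plan is to prove both that the right-hand side of Conjecture~\ref{conj:FGM_OGM_prim} is an upper bound on the worst-case and that it is attained by $f_{1,\tau_1}$, the two together giving equality. For the upper bound I would rely on Theorems~\ref{thm:sdp_pep} and~\ref{prop:main}: for $\mu=0$, $d\ge N+2$, and any fixed-step method with $h_{i,i-1}\neq 0$ (which holds for both FGM and OGM, since their last step always uses the most recent gradient), the exact worst-case of $f(y_N)-f_*$ equals the optimal value of~\eqref{PEPS:Pep_sdp}, which by strong duality equals the optimal value of the dual~\eqref{dualPep}. Writing $S:=\sum_{k=0}^{N-2}h_{N-1,k}$, it would then suffice, by weak duality alone, to exhibit one dual-feasible pair $(\lambda_{ij},\tau)$ with $\tau=\tfrac{L}{2(2S+3)}$, so that its objective $\tau R^2$ equals the conjectured bound $\tfrac{LR^2}{2(2S+3)}$.

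For achievability I would check $f_{1,\tau_1}$ directly. With $\mu=0$ the function is $f_{1,\tau}(x)=L\tau|x|-\tfrac{L}{2}\tau^2$ for $|x|\ge\tau$ and $\tfrac{L}{2}x^2$ otherwise; it is convex, continuously differentiable with an $L$-Lipschitz gradient, hence $f_{1,\tau_1}\in\mathcal{F}_{0,L}(\mathbb{R})$, with minimizer $x=0$ and $f_*=0$. Starting FGM (resp. OGM) from $x_0=R$, the key point is that every gradient encountered equals $L\tau_1$: the partial sums $\sigma_i:=\sum_{k=0}^{i-1}h_{i,k}$ satisfy, from the coefficient recurrences and $\theta_i\ge 1$, a relation of the form $\sigma_{i+1}-\sigma_i=\tfrac{\theta_i-1}{\theta_{i+1}}(\sigma_i-\sigma_{i-1})+c_i$ with $c_i\ge 1$, so $\sigma_i$ is increasing and $\sigma_i\le\sigma_{N-1}=S$ for $i\le N-1$; since $x_i=R-\tau_1\sigma_i$ and $\tau_1 S\le R-\tau_1$, all iterates used remain in the affine region $x\ge\tau_1$, confirming $\nabla f_{1,\tau_1}(x_i)=L\tau_1$. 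Using the last-step (primary) coefficients one gets $y_N=R-\tau_1\sum_{k=0}^{N-1}h_{N,k}=R-\tau_1(S+1)=R\tfrac{S+2}{2S+3}$ with $\tau_1=\tfrac{R}{2S+3}$, hence $f_{1,\tau_1}(y_N)=L\tau_1 y_N-\tfrac{L}{2}\tau_1^2=\tfrac{LR^2}{2(2S+3)}$, matching the claimed value.

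The crux is therefore the construction of the dual certificate. Guided by the structure of the worst-case instance just exhibited (iterates in the affine zone, all gradients equal, and the rank-one dual slack matrix already observed in the one-step example at the end of Section~\ref{sec:Peps}), I would search for a sparse multiplier pattern: nonzero $\lambda_{ij}$ only for consecutive index pairs $(i,i{+}1)$, $(i{+}1,i)$ and for pairs involving the optimum, with values written in terms of the coefficients $h_{N-1,k}$ and the sequence $\{\theta_i\}$. One then (i) solves the linear dual equation for these $\lambda_{ij}$ (the criterion singles out $f(y_N)$, and $C=0$), (ii) sets $\tau=\tfrac{L}{2(2S+3)}$, and (iii) verifies that the slack matrix $\tau A_R+\sum_{i,j}\lambda_{ij}A_{ij}$ is positive semidefinite by exhibiting it as a sum of rank-one terms, i.e. a sum-of-squares certificate in the iterates and gradients, exactly as in the fully explicit proof given there for $N=1$.

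The main obstacle is step (iii) together with guessing the $\lambda_{ij}$: unlike the plain gradient method (for which even the analogous Conjecture~\ref{conj:GM_f_TD} is only partially proved), the FGM/OGM step coefficients obey a two-term recurrence driven by the irrational sequence $\{\theta_i\}$, which makes both the candidate multipliers and the sum-of-squares decomposition of the slack matrix algebraically heavy; producing them in a form valid for all $N$ is precisely what is missing, which is why the statement is offered as a conjecture. A secondary, milder difficulty is to make the ``iterates remain in the affine zone'' step airtight for every $N$, since a single sign change of some $x_i$ would break the constant-gradient computation above and could alter the worst-case function.
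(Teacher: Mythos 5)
The statement you are asked about is a \emph{conjecture}: the paper itself offers no proof of the inequality, only (i) the analytic computation of the candidate worst case, obtained by assuming the worst-case function is one-dimensional piecewise quadratic of the form $f_{1,\tau}$ with all gradient evaluations in the affine zone and maximizing $f_{1,\tau}(y_N)$ over $\tau$ (which yields $\tau_1=R/(2S+3)$ and the value $\tfrac{LR^2}{2(2S+3)}$), and (ii) numerical support, namely solving the exact formulation~\eqref{PEPS:Pep_sdp} for $N\in\{1,\dots,100\}$ and observing agreement within $10^{-4}$ together with rank-one optimal Gram matrices. Your achievability half reproduces (i) correctly and even more carefully than the paper: the recurrence $\sigma_{i+1}-\sigma_i=\tfrac{\theta_i-1}{\theta_{i+1}}(\sigma_i-\sigma_{i-1})+c_i$ with $c_i\ge 1$ does hold for both FGM and OGM, the iterates indeed satisfy $x_i=R-\tau_1\sigma_i\ge\tau_1$ for $i\le N-1$, and the evaluation $f_{1,\tau_1}(y_N)=\tfrac{LR^2}{2(2S+3)}$ with $y_N=R\tfrac{S+2}{2S+3}$ is exact; your framing via Theorems~\ref{thm:sdp_pep} and~\ref{prop:main} (exactness and attained strong duality, with $h_{i,i-1}\neq0$ verified for FGM/OGM) is also the right machinery.

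The genuine gap is exactly where you locate it: the upper bound, i.e.\ a dual-feasible certificate of~\eqref{dualPep} with objective $\tfrac{LR^2}{2(2S+3)}$ valid for every $N$, is never produced, and without it nothing beyond a lower bound on the worst case is established. This missing certificate \emph{is} the conjectural content of the statement, so your plan cannot be completed as written; be aware also that the sparse multiplier pattern you propose (only consecutive pairs plus pairs involving $x_*$) is precisely the restriction the paper attributes to the relaxation of~\cite{Article:Drori}, which Table~\ref{table:GM_f_WC} shows to be strictly suboptimal in general, so the true certificate may require a denser $\lambda$ pattern. Where your route differs from the paper's is in the nature of the evidence: the paper certifies each finite $N$ numerically through the provably exact SDP (with verified solvers), whereas you aim at an all-$N$ analytic sum-of-squares certificate that neither you nor the paper supplies. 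Your text should therefore be presented as supporting evidence plus a proof strategy, not as a proof.
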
}
{\begin{conjecture} 
Any (secondary)  sequence of iterates $\left\{x_i\right\}$ generated by the fast gradient methd FGM (resp. optimized gradient method OGM) on a smooth convex function $f\in\mathcal{F}_{0,L}{(\mathbb{R}^d)}$ satisfies
\[ f(x_N)-f_*\leq f_{1,\tau_2}(x_{1,N})=\frac{LR^2}{2}\frac{1}{2\sum_{k=0}^{N-1} h_{N,k}+1}, \]
where $x_{1,N}$ is the final (secondary) iterate computed by FGM (resp. OGM) applied to $f_{1,\tau_2}$ starting from $x_0=R$, and quantities $h_{N,k}$ are the fixed coefficients of the last step of FGM (resp. OGM).
\label{conj:FGM_OGM_sec}
\end{conjecture}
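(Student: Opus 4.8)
The plan is to prove the conjectured equality in two steps: (i) the right-hand side is a valid \emph{upper} bound on $f(x_N)-f_*$ over all $f\in\mathcal{F}_{0,L}(\mathbb{R}^d)$, and (ii) it is \emph{attained} by the explicit function $f_{1,\tau_2}$, so that the bound is tight. Throughout, write $S=\sum_{k=0}^{N-1}h_{N,k}$ and $\tau_2=R/(2S+1)$, so that the target value is $\tfrac{LR^2}{2}\tfrac{1}{2S+1}=\tfrac{L}{2}\tau_2 R$. Since $\mu=0$ and, as shown just above, FGM and OGM are fixed-step methods (with $h_{i,i-1}\ge1\neq0$), Theorem~\ref{thm:sdp_pep} identifies the exact worst case with the optimal value of~\eqref{PEPS:Pep_sdp} built from the $\mathcal{F}_{0,L}$-interpolation inequalities of Corollary~\ref{corr:Linterp}, and Theorem~\ref{prop:main} guarantees no duality gap with the dual~\eqref{dualPep} together with attainment of the dual optimum. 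Hence step (i) reduces to producing \emph{one} feasible point of~\eqref{dualPep} with value $\tfrac{L}{2}\tau_2 R$: nonnegative multipliers $\{\lambda_{ij}\}$ for the inequalities of Corollary~\ref{corr:Linterp} and $\tau=\tfrac{L}{2(2S+1)}$ meeting the linear constraint $b-\sum_{i,j}\lambda_{ij}(u_j-u_i)=0$ and the matrix inequality $\tau A_R-C+\sum_{i,j}\lambda_{ij}A_{ij}\succeq0$ (here $C=0$). Equivalently, and more transparently, one seeks a nonnegative combination of the valid inequalities $f(x_i)\ge f(x_j)+g_j^{\top}(x_i-x_j)+\tfrac{1}{2L}\norm{g_i-g_j}_2^2$ that telescopes into an identity $f(x_N)-f_*=\bigl(\text{nonpositive interpolation terms}\bigr)+\tau\norm{x_0-x_*}_2^2-\norm{\cdot}_2^2$, in the spirit of the worked example closing Section~\ref{sec:Peps}.

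For step (ii), I would run FGM (resp.\ OGM) from $x_0=R$ on $f_{1,\tau_2}$. Since $\mu=0$, one has $\nabla f_{1,\tau_2}(x)=a_{\tau_2}=L\tau_2$ whenever $|x|\ge\tau_2$; moreover the cumulative step coefficients $c_i=\sum_{k=0}^{i-1}h_{i,k}$ are strictly increasing in $i$ (a function-independent fact readable off the FGM/OGM recurrences, since each step moves the iterate strictly ``to the left''). A short induction then shows every evaluation point satisfies $x_i=R-\tau_2 c_i\in[\tau_2,R]$, with in particular $x_N=\tau_2(S+1)\ge\tau_2$ by the very choice of $\tau_2$ --- so all gradients encountered equal $L\tau_2$ and the computation is self-consistent (this is exactly the ``iterates stay in the affine zone'' hypothesis invoked in the text). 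A one-line evaluation then gives $f_{1,\tau_2}(x_N)=L\tau_2 x_N+b_{\tau_2}=L\tau_2^2\bigl(S+\tfrac12\bigr)=\tfrac{L}{2}\tau_2 R=\tfrac{LR^2}{2(2S+1)}$, while $f_*=f_{1,\tau_2}(0)=0$ is the minimum and $\norm{x_0-x_*}_2=R$. Hence $f_{1,\tau_2}\in\mathcal{F}_{0,L}(\mathbb{R})$ is feasible for the performance estimation problem, which combined with (i) yields the claimed equality (and, via Corollary~\ref{corr:nplus2}, the bound holds for every $d$).

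The hard part is the dual certificate of step (i): the multipliers $\lambda_{ij}$ must be exhibited in closed form in terms of the sequences $\theta_i$ and the cumulative coefficients $h_{i,k}$, and positive semidefiniteness of the resulting slack matrix must be checked symbolically \emph{for every $N$} --- ideally by writing it as a single rank-one (or low-rank) square, as was possible for the known convergence proofs of Nesterov's method and of OGM. The observation stressed by the authors, that FGM and OGM yield the \emph{same} $\tau_2$ despite different coefficients, strongly suggests that a \emph{uniform} certificate exists but is non-obvious; concretely, I would first extract candidate $\lambda_{ij}$ from the numerically solved dual programs, conjecture their analytic form, and then prove feasibility by induction on $N$, carrying an invariant Lyapunov-type inequality $\phi_{i+1}\le\phi_i$ with $\phi_i$ a carefully chosen combination of $f(x_i)-f_*$, $\norm{g_i}_2^2$ and cross terms, arranged so that $\phi_N$ dominates $f(x_N)-f_*$ up to the $\tau\norm{x_0-x_*}_2^2$ budget. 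Pinning down the correct quadratic ``square'' term that absorbs the slack --- and doing so simultaneously for FGM and OGM --- is where essentially all the difficulty lies; the tightness half (ii) is, by comparison, routine.
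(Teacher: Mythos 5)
First, note that the paper does not prove this statement: it is stated as Conjecture~\ref{conj:FGM_OGM_sec} and supported only by numerical solutions of \eqref{PEPS:Pep_sdp} for $N\in\{1,\dots,100\}$, with $\tau_2$ derived exactly as in your step (ii), i.e.\ by running the method on the piecewise affine-quadratic function under the ``iterates stay in the affine zone'' observation. Your step (ii) is therefore sound and essentially reproduces the paper's reasoning: it establishes that $\frac{LR^2}{2}\frac{1}{2\sum_k h_{N,k}+1}$ is a \emph{lower} bound on the worst case (the function $f_{1,\tau_2}\in\mathcal{F}_{0,L}(\mathbb{R})$ attains it), modulo the small induction on the cumulative coefficients $c_i$ that you only sketch.

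The genuine gap is step (i), which is the entire content of the conjectured inequality. Invoking Theorems~\ref{thm:sdp_pep} and~\ref{prop:main} only tells you that \emph{some} optimal dual certificate for \eqref{dualPep} exists and that its value equals the true worst case; it does not tell you that this value equals $\frac{LR^2}{2(2\sum_k h_{N,k}+1)}$ --- that equality is exactly what must be proved, and only the ``$\geq$'' direction follows from step (ii). Your plan for the ``$\leq$'' direction (extract multipliers $\lambda_{ij}$ numerically, conjecture their closed form in terms of the $\theta_i$ and $h_{i,k}$, verify nonnegativity, the linear constraint and positive semidefiniteness of the slack matrix by induction on $N$, e.g.\ via a Lyapunov-type potential) is a reasonable research program, but no candidate multipliers, no potential $\phi_i$, and no square decomposition of the slack are exhibited, simultaneously for FGM and OGM or even for one of them. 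As you yourself note, this is ``where essentially all the difficulty lies''; it is precisely the missing ingredient that makes the statement a conjecture in the paper rather than a theorem, so the proposal as written does not constitute a proof --- it reduces the conjecture to an unproven closed-form dual feasibility claim.
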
}

{The worst-case bounds in these two conjectures involve the normalized step sizes of the FGM and OGM methods. It turns out these can be computed in closed form for OGM (see also \cite{kim2015optimized}), and give ($N\geq 1$)
\[ f(y_N)-f_*\leq \frac{LR^2}{4\theta_{N-1}^2+2}\leq \frac{LR^2}{2} \frac{2}{(N+1)^2+2}\text{  and  } f(x_N)-f_*\leq \frac{LR^2}{2\theta_N^2} \leq \frac{LR^2}{2} \frac{2}{(N+1)(N+1+\sqrt{2})}  \]
(where the inequalities rely on the bounds $\theta_{N-1}^2 \geq \frac{(N+1)^2}{4}$ and $\theta_N^2 \ge \frac{(N+1)(N+1+\sqrt{2})}{2}$). We were not able to obtain similar closed-form bounds for the FGM.}

{We now} compare the numerical values obtained with Conjectures~\ref{conj:FGM_OGM_prim} and~\ref{conj:FGM_OGM_sec} with analytical bounds {known} for the FGM. 
{We use for the primary sequence }
\begin{equation}
f(y_N)-f_*\leq \frac{2LR^2}{(N+1)^2},
\label{eq:FGM_Beck}
\end{equation}
which can be found in{~\cite{beck2009fast}, and for the secondary sequence}
\begin{equation}
f(x_N)-f_*\leq \frac{2LR^2}{(N+2)^2}
\label{eq:FGM_Kim}
\end{equation}
which was very recently derived in~\cite{kim2014optimized}.
The comparison is {displayed on} \figref{Fig:StdBdGd_vs_conj45}. The asymptotic behaviors of {both} sequences are well captured by the analytical bounds~(\ref{eq:FGM_Beck}) and~(\ref{eq:FGM_Kim}), but we observe that the estimation of the transient worst cases are improved by our conjectures: a factor approximately {equal to} $1.15$ is gained for both sequences after $30$ iterations.

{Before going into the next section, we comment on the applicability of our results to monotone variants of first-order methods, i.e. methods which guarantee $f(y_{i+1})\leq f(y_i)$. Consider for example FISTA~\cite{beck2009fast}, which is equivalent to FGM when applied to smooth unconstrained minimization.  MFISTA~\cite{beck2009fastmonotone}, a monotone variant of FISTA, happens to generate a monotonically decreasing sequence $\left\{f(y_i)\right\}_i$ when applied to our worst-case function $f_{1,{\tau_1}}$ from $x_0=R$. This means that the corresponding lower bound from Conjecture 4 also applies to MFISTA.}

\begin{figure}[!ht]
\begin{center}
\includegraphics[scale=0.5]{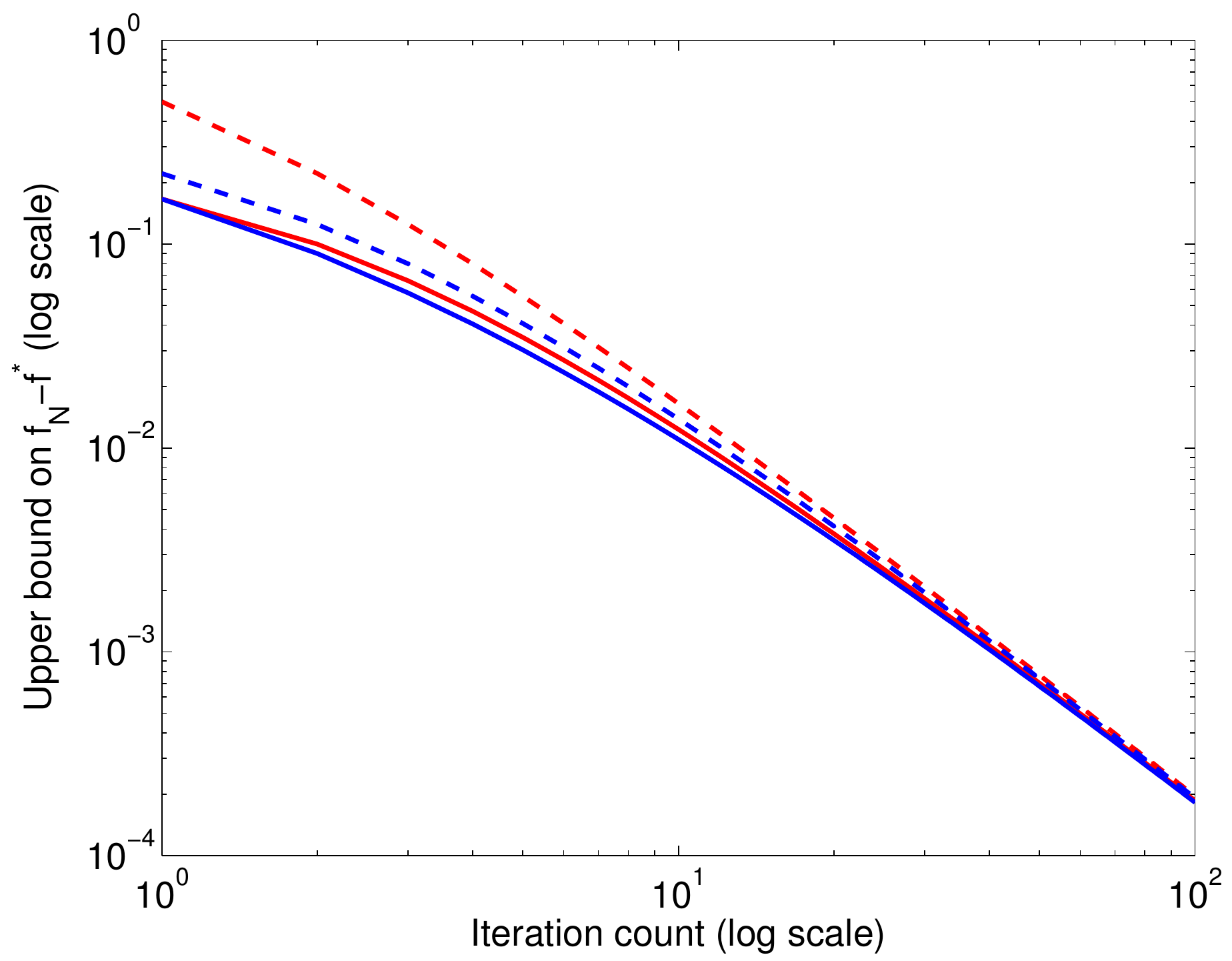}
\caption{{Comparison of the worst-case performance of the FGM: analytical bound~(\ref{eq:FGM_Beck}) (dashed red) versus Conjecture~\ref{conj:FGM_OGM_prim} (red) and analytical bound~(\ref{eq:FGM_Kim}) (dashed blue) versus Conjecture~\ref{conj:FGM_OGM_sec} (blue)}.}
\label{Fig:StdBdGd_vs_conj45}
\end{center}
\end{figure}

\subsection{{Estimation of the smallest gradient norm among all iterates}}
\label{subsec:numerics_mingrad}
First-order methods are often used in dual approaches where, in addition to objective function accuracy, gradient norm plays an important role. Indeed, this quantity controls primal feasibility of the iterates (see {e.g.,}\@ \cite{devolder2012double}). 
Considering for example the accelerated FGM in the smooth case, we know from the previous section
 that the {classical analytical bound on the} worst-case accuracy for a function in $\mathcal{F}_{0,L}{(\mathbb{R}^d)}$ is {given by} $\frac{2 L R^2}{(N+1)^2}$. From that bound, it is easy to obtain a similar bound on the last gradient {norm}, using Corollary~\ref{corr:Linterp}:
\begin{equation}
\norm{\nabla f(y_N)}_2\leq \sqrt{2L (f(y_N)-f_*)}\leq \frac{2LR}{N+1}.
\label{eq:naive_FGM}
\end{equation}
Observe that this asymptotic rate is significantly worse than that of the objective function accuracy{, and not better than that of the gradient method GM (see Conjecture~\ref{conj:GM_g_mu})}.

However, {it is well-known} that {the norm of the gradient} is not  decreasing monotonically among iterates of the FGM. Hence, in this section, we will estimate the worst-case performance of FGM according to the {smallest} observed gradient norm among all iterates:
$$\min_{i\in\left\{0,\hdots,N\right\}} \norm{\nabla f(y_i)}_2.$$
In order to do so, only a slightly modified version of~\eqref{PEPS:Pep_sdp} is needed: this min-type objective function is representable using a new variable $t$ for the objective and $N+1$ additional linear inequalities $t \le \norm{\nabla f(y_i)}^2_2 \Leftrightarrow t \le G_{i,i}$ {for all $0 \le i \le N$}. Note that {the maximum is still attained} since this concave piecewise linear objective function {is} continuous. 

This criterion was suggested in~\cite{nesterov2012make}, which proposes a variant of FGM that consists in performing $N/2$ steps of the standard FGM followed by $N/2$ steps of the GM with $h=1$. It is then theoretically established that this variant of FGM{, which we denote by MFGM,} satisfies
\begin{equation}
\min_{i\in\left\{0,\hdots,N\right\}} \norm{\nabla f(y_i)}_2 \leq \frac{8LR}{N^{3/2}}{,}
\label{eq:bestgrad_FGM}
\end{equation}
an improvement compared to the rate of convergence of the {gradient of the last iterate}. 

We now compare FGM with this modified variant MFGM using our performance estimation formulation.
\figref{Fig:mingrad_FGM} compares the behaviors of those methods in both their last (for FGM) and  best iterates, as well as the above analytic bounds \eqref{eq:naive_FGM} and \eqref{eq:bestgrad_FGM}.
\begin{figure}[!ht]
\begin{center}
\includegraphics[scale=0.5]{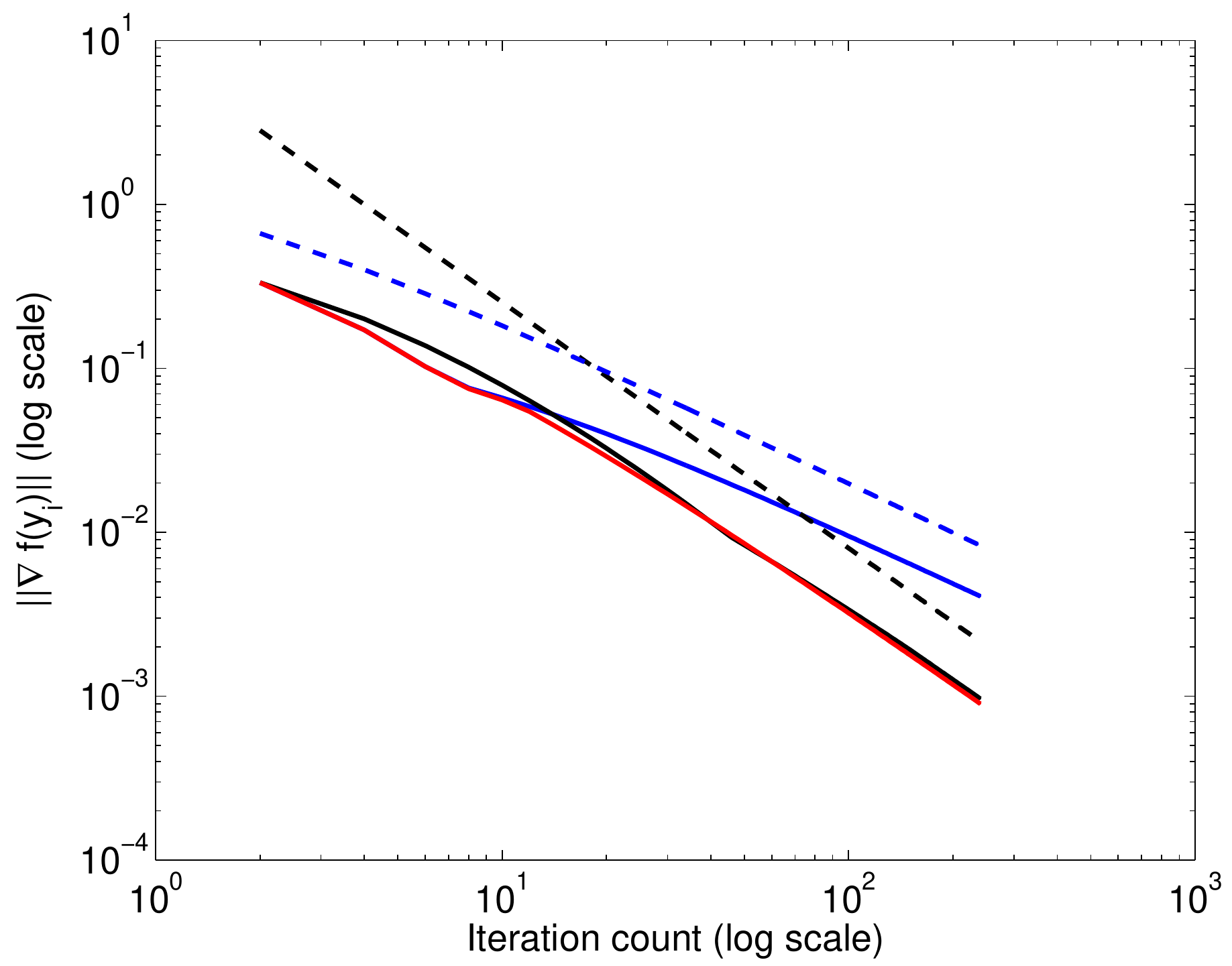}
\caption{Comparison of gradient norm convergence rates for the FGM and the MFGM from~\cite{nesterov2012make}. Theoretical guarantees are dashed. {Analytical bound on FGM~(\ref{eq:naive_FGM}) in its last iterate (dashed blue); numerical worst-case for FGM at its last iterate (blue); numerical worst-case for FGM at its best iterate (red); analytical bound on MFGM~(\ref{eq:bestgrad_FGM}) for the best iterate (dashed black); numerical worst-case for MFGM at its best iterate (black)}.}
\label{Fig:mingrad_FGM}
\end{center}
\end{figure}
This experiment confirms that the gradient norm of the last iterate of FGM decreases according to the {slower $\mathcal{O}(N^{-1})$ rate of \eqref{eq:naive_FGM}}. We also observe that {both the MFGM and the original FGM achieve the same $\mathcal{O}(N^{-3/2})$ convergence rate for the smallest gradient norm, which was not known before for FGM}. In addition, numerical results {reported} in Table~\ref{table:FGMandMFGM_grad} suggest that FGM performs slightly better than MFGM.

   \renewcommand{\arraystretch}{1.2}
  \begin{table}[ht!]                                                                                                                                     
\centering  
\begin{tabular}{c|lllll}                                                                                                                      
 N & FGM, analytic (\ref{eq:naive_FGM}) & FGM, last & FGM, best & MFGM, analytic (\ref{eq:bestgrad_FGM}) & MFGM, best \\     
\hline2 & $LR/1.50$ & $LR/3.00$ & $LR/3.00$ & $LR/0.35$ & $LR/3.00$ \\                                                                                                                                              
  4 & $LR/2.50$ & $LR/5.84$ & $LR/5.84$ & $LR/1.00$ & $LR/5.00$ \\                                                                                  
  10 & $LR/5.50$ & $LR/15.14$ & $LR/15.62$ & $LR/3.95$ & $LR/12.66$ \\                                                                                
  20 & $LR/10.50$ & $LR/25.08$ & $LR/34.49$ & $LR/11.18$ & $LR/30.77$ \\                                                                              
  30 & $LR/15.50$ & $LR/35.13$ & $LR/58.50$ & $LR/20.54$ & $LR/55.38$ \\                                                                             
  40 & $LR/20.50$ & $LR/45.19$ & $LR/86.17$ & $LR/31.62$ & $LR/86.41$ \\                                                                             
  50 & $LR/25.50$ & $LR/55.25$ & $LR/117.08$ & $LR/44.19$ & $LR/119.63$ \\                                                                           
  100 & $LR/50.50$ & $LR/105.49$ & $LR/311.34$ & $LR/125.00$ & $LR/296.58$ \\                                                                          
  200 & $LR/100.50$ & $LR/205.77$ & $LR/850.59$ & $LR/353.55$ & $LR/791.87$ \\                                                                                                                                                                                                                  
\end{tabular}                                                                                                                                         
\caption{FGM and MFGM: comparison between theoretical bounds and numerical results for the criteria $\norm{\nabla f({\color{red} \mathbf{y_N}})}_2$(last) and $\min_i\norm{\nabla f({\color{red} \mathbf{y_i}})}_2$ (best). Results obtained with~\cite{Article:Mosek}.}
\label{table:FGMandMFGM_grad}                                                                                                                                     
\end{table}   
  \renewcommand{\arraystretch}{1}
\newpage

A regularization technique is also describe{d} in~\cite{nesterov2012make}, {featuring} a $\mathcal{O}(N^{-2})$ convergence rate up to a logarithmic factor. A drawback of this approach is that it requires a bound on the distance to the optimal solution, and that the coefficients of the method explicitly depend on this bound. No fixed-step method achieving the same $\mathcal{O}(N^{-2})$ seems to be known.

\section{Conclusion}
\label{sec:ccl}

The contribution of this paper is threefold: first, we present necessary and sufficient conditions for smooth strongly convex interpolation. Those conditions are derived by showing an explicit way of constructing the interpolating functions.
Second, we show that the exact worst-case performance of any fixed-step first-order algorithm for smooth strongly convex {unconstrained} optimization {can} be formulated as a convex problem. In this context, {our interpolation procedure also provides explicit functions achieving the worst-case bounds computed by our approach}. Third, we {test of our formulation numerically} on a variety of functions classes, {first-order} methods and performance criteria, establishing on the way a series of conjectures on the corresponding worst-case behaviors. In particular, we suggest new tight estimates of the optimal step size for the fixed-step gradient method {with constant step size}, which depend on the number of iterations and the condition number.

Our {p}erformance {e}stimation problem provide a generic tool to analyze fixed-step first-order methods. It allows computing both exact worst-case guarantees and functions reaching them, and provides a unified algorithmic analysis for smooth convex functions and smooth strongly convex functions.

The exact worst-case values provided by our approach require solving a convex {semidefinite} program  whose size grows as the square of the number of iterations considered, which may become prohibitive when this number of iterations is large. 
This can be avoided using iteration-independent bounds, as proposed in~\cite{lessard2014analysis}, but at the cost of obtaining poorer worst-case guarantees. 
  
Further improvements to our approach  include an extension of~\eqref{Intro:PEP} to more general methods, such as first-order methods equipped with line search, or first-order methods designed to work on a restricted convex feasible region (projected gradient). Another desirable feature would be the ability to optimize the step sizes of the method considered in \eqref{PEPS:Pep_sdp}, as was proposed in~\cite{Article:Drori,kim2014optimized} for the relaxed version of~\eqref{Intro:PEP}.

\textbf{Software.} Our semidefinite programming approach to performance estimation has been implemented with MATLAB code, which can be downloaded from \verb?http://perso.uclouvain.be/adrien.taylor?. 
This routine features an easy-to-use interface, which allows the estimation of the worst-case performance of a given fixed-step first-order algorithm (to be chosen among a pre-defined list or to be specified by its coefficients) on a given class of functions, for a given performance criterion, after any number of steps.

\bibliographystyle{spmpsci}      
\bibliography{bib_}{}   

\end{document}